\newcommand{\rom}[1]{\uppercase\expandafter{\romannumeral #1\relax}}
  \theoremstyle{definition}
  \newtheorem{theorem}{Theorem}[section]
  \newtheorem{corollary}[theorem]{Corollary}
  \newtheorem{lemma}[theorem]{Lemma}
  \newtheorem{definition}[theorem]{Definition}
  \newtheorem{remark}[theorem]{Remark}
  \newtheorem{assumption}{Assumption}
  \newtheorem*{assumption*}{Assumption}
  \newcommand{\fu}{u}
  \newcommand{\fp}{q}
  \newcommand{\fP}{Q}
  \numberwithin{equation}{section}
  \newcommand{\subjclass}[1]{\bigskip\noindent\emph{2010 Mathematics Subject Classification:}\enspace#1}
  \newcommand{\keywords}[1]{\noindent\emph{Keywords:}\enspace#1}
\providecommand{\Div}{\operatorname{div}}          
\providecommand{\Id}{\Op{Id}}                     
\newcommand{\VE}{{\mathbf{E}}}
\newcommand{\VR}{{\mathbf{R}}}
\newcommand{\VV}{{\mathbf{V}}}
\newcommand{\Dsf}{\mathsf{D}}
\providecommand{\Ce}{{\cal E}}
\providecommand{\Cp}{{\cal P}}
\providecommand{\Cr}{{\cal R}}
\newcommand*{\Op}[1]{\mathsf{#1}} 
\newcommand{\eps}{{\varepsilon}}
\newcommand{\overbar}[1]{\mkern 1.5mu\overline{\mkern-1.5mu#1\mkern-1.5mu}\mkern 1.5mu}
\newcommand{\ben}{\begin{equation}}
\newcommand{\een}{\end{equation}}
\newcommand{\benn}{\begin{equation*}}
\newcommand{\eenn}{\end{equation*}}
\begin{document}

\title{Topological sensitivities via a Lagrangian approach \\  for semilinear problems }

\author{Kevin Sturm \footnote{Technische Universit\"at Wien, Wiedner Hauptstr. 8-10,
		1040 Vienna, Austria, E-Mail: kevin.sturm@tuwien.ac.at}}

\date{}

\maketitle

\begin{abstract}
  In this paper we present a methodology that allows the efficient computation of the topological derivative for semilinear elliptic problems within 
  the averaged adjoint Lagrangian framework. The generality of our approach should also allow the extension to 
  evolutionary and other nonlinear problems. Our strategy relies on a rescaled differential quotient of the averaged adjoint state variable which we show converges weakly to a function satisfying an equation defined in the whole space. A unique feature and advantage of this framework is that we only need to work with weakly converging subsequences of the differential quotient. This allows the computation of the topological sensitivity within a simple functional analytic framework under mild assumptions.

\subjclass{Primary 49Q10; Secondary 49Qxx,90C46.}

\keywords{shape optimisation; topology optimisation; asymptotic analysis; shape sensitivity; averaged adjoint approach.}
\end{abstract}

  \maketitle

\section{Introduction}
\emph{Shape functions} (also called shape functionals) are real valued functions defined on sets of subsets of the Euclidean space $\VR^d$. The field of mathematics dealing with the minimisation of shape functions 
that are constrained by a partial differential equation is called \emph{PDE constrained shape optimisation}.  Numerous applications in the engineering and life sciences, such as the aircraft and car design or electrical impedance/magnetic induction tomography, underline its importance; \cite{a_HILANO_2011a,a_HILA_2008a}.  Among other approaches \cite{a_CLKU_2016a,a_BESI_2004a,b_NOSO_2013a,b_DEZO_2011a,b_SOZO_1992a} the topological derivative approach \cite{a_ESKOSC_1994a,a_SOZO_1999a,a_CEGAGUMA_2000a}  constitutes an important tool to solve shape optimisation problems for which the final topology of the shape is unknown. We refer to the monograph \cite{b_NOSO_2013a} and references therein for applications of this approach. 

The idea of the \emph{topological derivative} is to study the local behaviour of a shape function $J$ with respect to a family of singular perturbations  $(\Omega_\eps)$. Two important singular perturbations are obtained by translating and scaling of an inclusion $\omega$ which contains the origin by $\omega_\eps(z) := z+\eps \omega$; then the singular perturbations are given by $\Omega_\eps := \Omega \cup \omega_\eps(z)$ for $z\in \Omega^c$ and $\Omega_\eps := \Omega\setminus \overbar \omega_\eps(z)$ for $z\in \Omega$. Both singular perturbations are examples of the class of perturbations called dilatations that are considered in \cite{a_DE_2017a}. The topological derivative of a shape function $J$ with respect to perturbations $(\Omega_\eps)$ is defined by 
\ben\label{eq:topo_definition}
\partial J(\Omega) := \lim_{\eps \searrow 0} \frac{J(\Omega_\eps) - J(\Omega)}{\ell(\eps)},
\een
where $\ell:[0,\tau]\to \VR$, $\tau >0$, is an appropriate function depending on the perturbation chosen.  
If $\Omega$ is perturbed by a family of transformations $\Phi_\eps:=\Id + \eps \VV:\VR^d\to \VR^d$ generated by a Lipschitz vector field $\VV:\VR^d \to \VR^d$, that is, $\Omega_\eps := \Phi_\eps(\Omega)$, then we can choose $\ell(\eps)=\eps$ and \eqref{eq:topo_definition} reduces to the definition of the shape derivative \cite{b_SOZO_1992a}. So the topological derivative can be seen as an generalisation of the shape derivative.  In some cases, notably when shape functions are constrained by elliptic partial differential equations, the topological derivative can be obtained as the singular limit of the shape derivative as presented in the monograph \cite[pp. 12]{b_NOSO_2013a}. While the shape derivative can be interpreted as the Lie derivative on a manifold, the topological derivative is merely a semi-differential defined on a cone, which makes its computation a challenging topic; see \cite{a_DE_2017a}. 

The goal of this paper is to give a coincide way to compute topological sensitivities for the following class of semilinear problems. Given a bounded domain $\Dsf\subset \VR^d$, $d\in \{2,3\}$, with Lipschitz boundary $\partial \Dsf$ we want to find the topological derivative of the objective function 
\ben\left.\tag{C}
J(\Omega) := \int_{\Dsf} j(x,u(x))\; dx
\quad\right\}
\een
in an open set $\Omega\subset \Dsf$ subject to $\fu=\fu_\Omega$ solves the semilinear transmission problem
\ben\tag{S}
\left.
\begin{aligned}
  -\Div(\beta_1 \nabla \fu^+) + \varrho_1(\fu^+) &= f_1  \quad && \text{ in } \Omega \\
  -\Div(\beta_2 \nabla \fu^-) + \varrho_2(\fu^-) &= f_2 \quad && \text{ in } \Dsf\setminus \overbar{\Omega}\\
  \fu^-& = 0 \quad && \text{ on }\partial \Dsf \\
  (\beta_1\nabla \fu^+)\nu & = (\beta_2\nabla \fu^-)\nu \quad && \text{ on } \partial  \Omega \\
  \fu^+& = \fu^- \quad && \text{ on } \partial \Omega  
\end{aligned}\quad \right\}
\een
where $\fu^+,\fu^-$ denote the restriction of $\fu$ to  $\Omega$ and  $\Dsf\setminus \overbar \Omega$, respectively. The function $\nu$ denotes the outward pointing unit normal field 
along $\partial \Omega$. The technical assumptions for the matrix valued functions $\beta_1,\beta_2$ and the scalar functions $j, \varrho_1,\varrho_2,f_1,f_2$ will be introduced in Section~\ref{sec:4}. A related work is \cite[Ch. 10, pp. 277]{b_NOSO_2013a}, which is based on the research article \cite{a_IGNAROSOSZ_2009a}, where a semilinear problem without transmission conditions in a H\"older space setting is studied.

There are two main approaches to compute topological derivatives for PDE constrained shape functions. A typical and general strategy to obtain the topological sensitivity is to derive the asymptotic expansion of the partial differential equation with respect to the 
singular perturbation of the shape \cite{a_NASLSO_2005a,a_NASO_2003a}. For our problem above this would amount to prove that an expansion of the form (see \cite[p. 280]{b_NOSO_2013a})
\ben\label{eq:expansion1}
u_\eps(x) = u(x) + \eps K_1(\eps^{-1} x) + \eps^2(K_2(\eps^{-1}x) + u'(x)) + \Cr_\eps(x)
\een
exists. Here $K_1,K_2$ are so-called called boundary layer correctors, which solve certain exterior boundary value problems and $u'$ is called regular corrector and  solves a linearised system. The function $u_\eps$ denotes the solution to  (S) for the singular perturbed domain $\Omega_\eps$ and $\Cr_\eps(x)$ is an appropriate remainder. However, the proof of an expansion like \eqref{eq:expansion1} can technically involved and depends very much on the problem; \cite{a_IGNAROSOSZ_2009a}. 

A second approach to compute the topological derivative is presented in \cite{a_AM_2006a} and based on a perturbed adjoint equation, see also \cite{a_CEMOVO_1998a,a_AM_2006a,a_AMBO_2017a,c_GALA_2012a, phd_GA_2017a} and \cite{a_LHIFRSC_2013a}. A key of this method is to prove
\ben\label{eq:expansion2}
\begin{split}  
  u_\eps(x) & = u(x) + \eps K_1(\eps^{-1}x) + \Cr_\eps^1(x),\\
  p_\eps(x) & = p(x) + \eps \fP(\eps^{-1} x) + \Cr_\eps^2(x),
\end{split}
\een
where $K_1$ is the same as in \eqref{eq:expansion1}, $\fP$ is the solution to an exterior problem, and $\Cr_\eps^1,\Cr_\eps^2$ are appropriate remainder that have to go to zero in some function space. Here $p_\eps$ is the solution to a certain perturbed adjoint equation depending on the derivative of $J$; see \cite{a_AM_2006a}.  As a by-product of this 
approach one obtains the topological sensitivity for non-transmission type problems where 
Neumann boundary conditions on the 
inclusion are imposed. However, the proof of the expansions \eqref{eq:expansion2}, particularly for nonlinear problems, can be technically involved and necessitate knowledge of the asymptotic behaviour of $\fP$ and $K_1$ at infinity.  

In this paper we will show that neither the expansion \eqref{eq:expansion1} nor \eqref{eq:expansion2} are necessary to obtain the topological sensitivity for (S). For this purpose, we use a Lagrangian approach which uses the averaged adjoint variable $\fp_\eps$ \cite{a_ST_2015a, phd_ST_2014a,a_DEST_2017a}. The key ingredient, which leads to the existence of the topological derivative of (C), is the convergence property
\ben\label{eq:weak_Q_intro}
\nabla \left(\frac{\fp_\eps(z+\eps x) - \fp(z+\eps x)}{\eps}\right) \rightharpoonup \nabla \fP \qquad \text{ weakly in } L_2(\VR^d)^d, 
\een
where $\fP$ is the same function as in \eqref{eq:expansion2}. The averaged adjoint variable reduces to the usual adjoint in the unperturbed situation, that is, $q_0=q=p=p_0$. We emphasise that the weak convergence property \eqref{eq:weak_Q_intro} is a 
relaxation of \eqref{eq:expansion1} and \eqref{eq:expansion2}, since no remainder estimates are necessary. In addition no further knowledge about the asymptotic behaviour of $\fP$ at infinity is needed. We will demonstrate that the proof of \eqref{eq:weak_Q_intro} is constructive in that it reveals the equation $\fP$ must satisfy. This is particularly important when dealing with other more complicated nonlinear equations, e.g., quasilinear equations. We will show that our strategy also allows, with minor changes, to treat the extremal case where $\beta_1,\varrho_1,f_1=0$, i.e., the transmission problem (S) reduces to a semilinear equation with homogeneous Neumann boundary conditions on $\partial \Omega$. Compared to previous works we can prove the existence of the topological derivative under milder assumptions on the regularity of the inclusion.

\subsection*{Notation and definitions}
\paragraph{Notation for derivatives}
Let $ (\eps , \fu, \fp)\mapsto G(\eps,\fu,\fp): [0, \tau ] \times X \times Y \to \VR$ be a function defined on real normed vector spaces $X,Y$, and $\tau >0$. When the limits exist we use the following notation:
	\begin{gather}
	v\in X, \quad \partial_\fu G(\eps,u,\fp)(v):= \lim_{t \searrow 0}\frac{G(\eps,\fu+t v,\fp)-G(\eps,\fu,\fp)}{t}
	\\
	w\in Y, \quad \partial_\fp G(\eps,\fu,\fp)(w):= \lim_{t \to 0}\frac{G(\eps,\fu,\fp+t w)-G(\eps,\fu,\fp)}{t}.
	\end{gather}
The notation $t\searrow 0$ means that $t$ goes to $0$ by strictly positive values.

\paragraph{Miscellaneous notation}
Standard $L^p$ spaces and Sobolev spaces on an open set $\Omega\subset \VR^d$ are denoted $L_p(\Omega)$ and $W^k_p(\Omega)$, respectively, where $p\ge 1$ and $k\ge 1$. In case $p=2$ and $k\ge 1$  we set as usual $H^k(\Omega):= W^k_2(\Omega)$. Vector valued spaces are denoted $L_p(\Omega)^d:=L_p(\Omega,\VR^d)$ and $W^k_p(\Omega)^d:=W^k_p(\Omega,\VR^d)$.  We write $\fint_A f\; dx := \frac{1}{|A|} \int_{A} f\; dx$ to indicate the average of $f$ over a measurable set $A$ with measure $|A|<\infty$. The \emph{H\"older conjugate} of $p\in [1,\infty)$ is defined by $p':= p/(p-1)$.  For $1\le p<d$ we denote by $p^*:= dp/(d-p)$ the \emph{Sobolev conjugate} of $p$. Given a normed vector space $V$ we denote by $\mathcal L(V,\VR)$ the space of linear and continuous functions on $V$.  We denote by $B_\delta(x)$ the ball centred at $x$ with radius $\delta >0$ and  set $\bar B_\delta(x) :=\overbar{B_\delta(x)}$.

\section{Abstract averaged adjoint framework}\label{sec:2}

\subsection{Lagrangians and infimum}
The following material can be found in \cite{a_DEST_2017a}. We begin with the definition of a Lagrangian function.
\begin{definition}
 Let $X$ and $Y$ be vector spaces and $\tau >0$. A \emph{parametrised Lagrangian} (or short Lagrangian) is a function
\begin{gather*}
(\eps,\fu,\fp) \mapsto  G(\eps,\fu,\fp): [0, \tau ] \times X \times Y \to \VR,
\end{gather*}
satisfying for all $(\eps, \fu)\in [0,\tau]\times X$, 
\ben
\fp\mapsto G(\eps,\fu,\fp) \quad \text{ is }  \emph{affine} \text{ on } Y.
\een
\end{definition}
The next definition formalises the notion of state and perturbed state variable associated with $G$. 
\begin{definition}
  For $\eps \in [0,\tau]$ we define the \emph{state equation} by: find $u_\eps \in X$, such that 
  \ben\label{eq:state}
  \text{ find }  u_\eps\in X \text{ such that } \quad  \partial_\fp G(\eps,\fu_\eps,0)(\varphi)=0 \quad \text{ for all } \varphi\in X. 
\een
The set of solution of \eqref{eq:state} (for $\eps$ fixed) is denoted by $E(\eps)$. For $\eps =0$, the elements of $E(\eps)$ are called \emph{unperturbed states} (or short states) and for $\eps >0$ they are referred to as \emph{perturbed states}. 
\end{definition}

\begin{definition}
We introduce for $\eps \in [0,\eps]$ the set of minimisers
\ben
X(\eps) = \{u_\eps \in E(\eps): \; \inf_{u\in E(\eps)} G(\eps,u, 0) = G(\eps,u_\eps,0) \}.
\een
\end{definition}
Notice that $X(\eps)\subset E(\eps)$ and that $X(\eps)=E(\eps)$ whenever $E(\eps)$ is a singleton.
We associate with the \emph{parameter} $\eps$ the \emph{parametrised infimum}
\ben\label{def_g}
  \eps \mapsto g(\eps) := \inf_{\fu \in E(\eps)} G(\eps ,\fu,0):[0,\tau]\to \VR.
\een 
 We now recall sufficient conditions introduced in \cite{a_DEST_2017a} under which the limit    \ben
    d_{\ell}g(0):= \lim_{\eps\searrow 0}\frac{g(\eps)-g(0)}{\ell(\eps)}
    \een
    exists, where $\ell: [0,\tau] \to \VR$ is a given function satisfying $\ell(0)=0$ and $\ell(\eps)>0$ for 
$\eps \in (0,\tau]$. 
The key ingredient is the so-called \emph{averaged adjoint equation}. The definition of the 
averaged adjoint equation requires that the set of states is nonempty: 
\begin{assumption*}[H0]
  For all $\eps\in [0,\tau]$ the set $X(\eps)$ is nonempty. 
\end{assumption*}
 Before we can introduce the averaged adjoint equation we need the following hypothesis. 
\begin{assumption*}[H1]
  For all $\eps \in [0,\tau]$ and $(\fu_0, \fu_\eps) \in X(0)\times X(\eps)$ we assume:
\begin{enumerate}
      	\item [\textup{(i)}] 
	  For all $\fp \in Y$, the mapping $s\mapsto G(\eps , s\fu_\eps + (1-s)\fu_0), \fp):[0,1]\to \VR$ is absolutely continuous.
	\item [\textup{(ii)}]
	  For all $(\varphi,q)\in X\times Y$ and almost all $s\in (0,1)$
	the function 
	\ben
	s \mapsto \partial_\fu G(\eps, s\fu_\eps + (1-s)\fu_0,\fp)(\varphi):[0,1]\to \VR
	\een
	 is well-defined and belongs to $L_1(0,1)$.
\end{enumerate}
\end{assumption*}
\begin{remark}
Notice that item (i) implies that for all $\eps \in [0,\tau]$, $(\fu_0, \fu_\eps)\in X(0)\times X(\eps)$ and $\fp\in Y$, 
\ben
\label{eq.int}
	G(\eps,\fu_\eps,\fp) = G(\eps,\fu_0,\fp) +\int_0^1  \partial_\fu G(\eps, s\fu_\eps + (1-s)\fu_0,\fp)(\fu_\eps-\fu_0) \, ds.
\een
This follows at once by applying the fundamental theorem of calculus to $s\mapsto G(\eps, s\fu_\eps + (1-s)\fu_0, \fp)$ on $[0,1]$.
\end{remark}

The following gives the definition of the averaged adjoint equation; see \cite{c_ST_2015a}. 
\begin{definition}
  Given $\eps\in [0,\tau]$ and $(\fu_0,\fu_\eps)\in X(0)\times X(\eps)$, the 
	\emph{averaged adjoint state equation} is defined as follows: find $\fp_\eps \in X$, such that
	\ben\label{eq:aa_equation}
	\int_0^1 \!\!\partial_\fu G(\eps,s\fu_\eps + (1-s)\fu_0 ,\fp_\eps)(\varphi)\, ds=0 \quad \text{ for all } \varphi\in X. 
	\een
	For every triplet $(\eps, u_0,u_\eps)$ the set of solutions of \eqref{eq:aa_equation} is denoted by $Y(\eps,\fu_0,\fu_\eps)$ and its elements are referred to as \emph{adjoint states} for $\eps =0$ and \emph{averaged adjoint states} for $\eps >0$. 
\end{definition}

	Notice that $ Y(0, \fu_0) := Y(0,\fu_0,\fu_0)$ is the usual set of \emph{adjoint states} associated with $\fu_0$,
	\begin{gather}
	\label{eq.averageAdjoint0}
	Y(0,\fu_0) = \left \{\fp\in Y: 
	\forall \varphi \in X, \,
	\partial_\fu G(0, \fu_0,\fp)(\varphi)=0
	\right \}.
	\end{gather}
An important consequence of the introduction of the averaged adjoint state is
the following identity: for all $\eps\in [0,\tau]$, $(\fu_0, \fu_\eps)\in  X(0) \times X(\eps)$ and  $\fp_\eps \in Y(\eps,\fu_0,\fu_\eps)$,
\begin{gather}
\label{eq.simple}
	g(\eps)=G(\eps,\fu_\eps,\fp_\eps)  = G(\eps,\fu_0,\fp_\eps).
\end{gather}
This is readily seen by substituting $\fp_\eps$ into  equation \eqref{eq.int}. 
The following result is an extension of \cite[Thm. 3.1]{a_DEST_2017a}. We refer the reader to \cite{c_DEST_2016a,c_ST_2015a} for further
results on the averaged adjoint approach and \cite{phd_ST_2014a} for more examples involving the shape derivative. 
\begin{theorem}[\cite{a_DEST_2017a}] \label{thm:diff_lagrange}
  Let Hypothoses~(H0) and (H1) and the following conditions be satisfied. 
  \begin{itemize}
    \setlength{\itemsep}{3pt}
  \item[(H2)]  For all $\eps\in [0,\tau]$ and $(\fu_0,\fu_\eps) \in X(0)\times X(\eps)$ the set $Y(\eps,\fu_0,\fu_\eps)$ is nonempty.
  \item[(H3)] For all $\fu_0\in X(0)$ and $\fp_0\in Y(0,\fu_0)$ the limit 
\ben
\partial_\ell G(0,\fu_0,\fp_0) := \lim_{\eps\searrow 0}\frac{G(\eps,\fu_0, \fp_0) - G(0,\fu_0, \fp_0)}{\ell(\eps)} \quad \text{ exists}.
\een
\item[(H4)]
	There exist sequences $(u_\eps)$ and $(\fp_\eps)$, where $u_\eps \in X(\eps)$ and $\fp_\eps \in Y(\eps,u_0,u_\eps)$, such that the limit 
        \ben
	R := \lim_{\eps\searrow 0} \frac{G(\eps,\fu_0,\fp_\eps)-G(\eps,\fu_0,\fp_0)}{\ell(\eps)} \quad \text{ exists}. 
	\een
\end{itemize}    
Then we have 
\ben
d_\ell g(0) = \partial_\ell G(0,\fu_0,\fp_0) + R.
\een
Moreover, $R=R(\fu_0,\fp_0)$ does not depend on the choice of the sequences $(\fu_\eps)$ and $(\fp_\eps)$, but only on $\fu_0$ and $\fp_0$. 
\end{theorem}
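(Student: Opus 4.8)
The plan is to read the identity off almost directly from \eqref{eq.simple}. Fix $\fu_0\in X(0)$, which exists by (H0), and $\fp_0\in Y(0,\fu_0)$, which exists by (H2) applied with $\eps=0$ and $\fu_\eps=\fu_0$, so that $Y(0,\fu_0,\fu_0)=Y(0,\fu_0)$. Recall that \eqref{eq.simple} is a consequence of the fundamental-theorem-of-calculus identity \eqref{eq.int} — hence of (H1) — together with the averaged adjoint equation \eqref{eq:aa_equation} tested against the admissible increment $\varphi=\fu_\eps-\fu_0$, which makes the integral term in \eqref{eq.int} vanish. Combining this with the affine dependence of $G$ on its last argument and the state equation \eqref{eq:state} (which gives $G(\eps,\fu_\eps,0)=G(\eps,\fu_\eps,\fp_\eps)$ because $\fp_\eps\in X$ lies in the test space of \eqref{eq:state}), one obtains $g(\eps)=G(\eps,\fu_\eps,0)=G(\eps,\fu_\eps,\fp_\eps)=G(\eps,\fu_0,\fp_\eps)$ for every $\fu_\eps\in X(\eps)$ and $\fp_\eps\in Y(\eps,\fu_0,\fu_\eps)$; in particular $g(0)=G(0,\fu_0,\fp_0)$. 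I would reproduce this short computation for completeness, since it is the only non-formal input of the argument.

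Next I would insert the term $G(\eps,\fu_0,\fp_0)$ into the difference quotient and split it:
\ben
\begin{aligned}
\frac{g(\eps)-g(0)}{\ell(\eps)}
&=\frac{G(\eps,\fu_0,\fp_\eps)-G(\eps,\fu_0,\fp_0)}{\ell(\eps)}\\
&\quad+\frac{G(\eps,\fu_0,\fp_0)-G(0,\fu_0,\fp_0)}{\ell(\eps)}.
\end{aligned}
\een
By (H3) the second summand converges to $\partial_\ell G(0,\fu_0,\fp_0)$ as $\eps\searrow 0$. For the first summand the crucial remark is that, by \eqref{eq.simple}, its numerator equals $g(\eps)-G(\eps,\fu_0,\fp_0)$, an expression that does not involve the chosen sequences $(\fu_\eps)$ and $(\fp_\eps)$ at all; choosing the sequences supplied by (H4), the first summand converges to $R$. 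Passing to the limit in the displayed identity then shows simultaneously that $d_\ell g(0)$ exists and that $d_\ell g(0)=\partial_\ell G(0,\fu_0,\fp_0)+R$.

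The independence statement comes for free from the same observation: since $G(\eps,\fu_0,\fp_\eps)=g(\eps)$ for every admissible pair of sequences, the number $R=\lim_{\eps\searrow 0}\ell(\eps)^{-1}\bigl(g(\eps)-G(\eps,\fu_0,\fp_0)\bigr)$ depends only on $g$, $\ell$ and the pair $(\fu_0,\fp_0)$, and not on the particular sequences realising it. I do not anticipate a genuine obstacle: the result is essentially a formal rearrangement once \eqref{eq.simple} is in hand, and the only steps deserving care are the $\eps=0$ bookkeeping in \eqref{eq.simple} and the verification, from the affine structure of $G$ and the state equation, that $g(\eps)$ may be written as $G(\eps,\fu_\eps,\fp_\eps)$ in the first place.
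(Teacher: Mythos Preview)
Your proposal is correct and follows essentially the same route as the paper: use \eqref{eq.simple} to write $g(\eps)-g(0)=G(\eps,\fu_0,\fp_\eps)-G(0,\fu_0,\fp_0)$, insert and remove $G(\eps,\fu_0,\fp_0)$, and invoke (H3) and (H4). Your extra observation that the first summand's numerator equals $g(\eps)-G(\eps,\fu_0,\fp_0)$, hence is independent of the chosen sequences, makes the final independence claim slightly more explicit than the paper's argument, which deduces it a posteriori from the formula $R=d_\ell g(0)-\partial_\ell G(0,\fu_0,\fp_0)$.
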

\begin{proof}
  Thanks to Hypothoses~(H0)-(H2) the sets $X(\eps)$ and $Y(\eps,\fu_0,\fu_\eps)$ are nonempty for all $\eps$. 
Therefore in view of \eqref{eq.simple} we have for all $\eps \in [0,\tau]$,  $(u_0,u_\eps) \in X(0)\times X(\eps)$ and $\fp_\eps \in Y(\eps,u_\eps,u_0)$,   
\ben
\begin{split}
g(\eps)-g(0) & =  G(\eps,\fu_0,\fp_\eps)-G(0,\fu_0,\fp_0)\\
	  & =  G(\eps,\fu_0,\fp_\eps) - G(\eps,\fu_0,\fp_0) + G(\eps,\fu_0,\fp_0)-G(0,\fu_0,\fp_0).
\end{split}
\een
Thus selecting $(u_\eps)$ and $(\fp_\eps)$ from Hypothosis (H4) and using Hypothosis (H3) we obtain  
\ben\label{eq:dg_}
\begin{split}
  d_\ell g(0) & = \lim_{\eps\searrow 0} \frac{G(\eps,\fu_0,\fp_0)-G(0,\fu_0,\fp_0)}{\ell(\eps)} + \lim_{\eps\searrow 0}  \frac{G(\eps,\fu_0,\fp_\eps)-G(\eps,\fu_0,\fp_0)}{\ell(\eps)} \\
& =\partial_\ell G(0,\fu_0,\fp_0) + R.
\end{split}
\een
It follows from \eqref{eq:dg_} that $R$ only depends on $\fu_0$ and $\fp_0$. 
\end{proof}

\begin{remark}
  An important application of Theorem~\ref{thm:diff_lagrange} is the computation of shape derivatives for which one chooses
  $\ell(\eps) =\eps$, see e.g., \cite{c_ST_2015a,phd_ST_2014a}. In this case one typically has $R(\fu_0,\fp_0)=0$, which means 
      \ben
d_\eps g(0) = \partial_\eps G(0,\fu_0,\fp_0).
       \een
       However for the topological derivative, in which case $\ell(\eps)\neq \eps$, the term $R(\fu_0,\fp_0)$ is typically not equal to zero as shown by the one dimensional example of \cite{c_DEST_2016a}.
\end{remark}

\section{Linear elliptic equations in \texorpdfstring{$\VR^d$}{Lg}}\label{sec:3}
In preparation for the study of the semilinear problem (S), we first recall existence and uniqueness results for the following exterior problem.  Let $\omega\subset \VR^d$ be an open and bounded set, and let $\zeta\in \VR^d$ be a vector. 
Given a suitable vector space $V$ of functions $\VR^d\to \VR$ we consider: find $Q_\zeta\in V$ such that
\ben\label{eq:laplace_unbounded_intro}
\int_{\VR^d} A  \nabla \psi \cdot \nabla Q_{\zeta} \;dx  =   \int_{\omega} \zeta \cdot \nabla \psi \; dx \quad \text{ for all } \psi \in V.
\een
Here $A:\VR^d \to \VR^{d\times d}$ is a measurable, uniformly coercive (not necessarily symmetric) matrix-valued 
functions, that is, there are constants $M_1,M_2>0$, such that  
\ben
M_1|v|^2 \le A(x)v\cdot v \le M_2|v|^2 \quad \text{  for a.e } x\in \VR^d \text{ and all } v\in \VR^d. 
\een
The well-posedness of \eqref{eq:laplace_unbounded_intro} can be achieved by several choices of $V$. The most popular ones are weighted Sobolev spaces; see \cite{a_DELI_1955a}.  In the next section we discuss a more straight forward choice for $V$.

\subsection{Solution in the Beppo-Levi space}

\begin{definition}
For $d\ge 1$ define 
  \ben
 BL(\VR^d) :=  \{u\in H^1_{\text{loc}}(\VR^d):\; \nabla u \in L_2(\VR^d)^d\}.
  \een
  Then the \emph{Beppo-Levi space} is defined by 
     \ben
      \dot{BL}(\VR^d) := BL(\VR^d)/\VR,
      \een
      where $/\VR$ means that we quotient out the constant functions. We denote by $[u]$ the equivalence classes of $\dot{BL}(\VR^d)$. The Beppo-Levi space is equipped with the norm
      \ben
      \|[u]\|_{\dot H^1(\VR^d)} := \|\nabla u\|_{L_2(\VR^d)^d}, \quad u\in [u].
      \een
\end{definition}
The Beppo-Levi space is a Hilbert space (see \cite{a_DELI_1955a,a_ORSU_2012a} and also \cite{a_AUMEPR_2015a}) and $C^\infty_c(\VR^d)/\VR$ is dense in $\dot{BL}(\VR^d)$.

\begin{lemma}
  Let $d\ge 1$ and suppose that $A$ satisfies \eqref{eq:laplace_unbounded_intro}. Then there exists a unique equivalence class $[Q]\in \dot{BL}(\VR^d)$ solving 
  \ben\label{eq:exterior_beta0}
 \int_{\VR^d} A \nabla \psi\cdot \nabla \fP  \; dx = \int_\omega \zeta \cdot \nabla \psi\; dx \quad \text{ for all } \psi \in BL(\VR^d).
 \een
\end{lemma}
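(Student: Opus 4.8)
The plan is to recognise \eqref{eq:exterior_beta0} as the weak form associated to a bounded, coercive bilinear form on the Hilbert space $\dot{BL}(\VR^d)$ and to apply the Lax--Milgram theorem. The first observation is that both sides of \eqref{eq:exterior_beta0} depend on the test function $\psi$ (and on the unknown $\fP$) only through the gradient $\nabla\psi$ (resp.\ $\nabla\fP$). Hence the form
\[
a([u],[\psi]) := \int_{\VR^d} A\,\nabla\psi\cdot\nabla u\,dx, \qquad F([\psi]) := \int_\omega \zeta\cdot\nabla\psi\,dx,
\]
are well defined on the quotient space $\dot{BL}(\VR^d)$ (they do not see the additive constants), a solution of \eqref{eq:exterior_beta0} modulo constants is exactly a $[\fP]=[Q]$ with $a([Q],[\psi])=F([\psi])$ for all $[\psi]\in\dot{BL}(\VR^d)$, and, since the test functions enter only via $\nabla\psi$, testing against $BL(\VR^d)$ is the same as testing against $\dot{BL}(\VR^d)$.

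Next I would verify the Lax--Milgram hypotheses. Boundedness of $a$ follows from Cauchy--Schwarz together with the (standard $L^\infty$) bound on $A$: $|a([u],[\psi])|\le M_2\,\|\nabla u\|_{L_2(\VR^d)^d}\|\nabla\psi\|_{L_2(\VR^d)^d}=M_2\,\|[u]\|_{\dot H^1(\VR^d)}\|[\psi]\|_{\dot H^1(\VR^d)}$, while coercivity follows from the lower bound $M_1|v|^2\le A(x)v\cdot v$, giving $a([u],[u])\ge M_1\,\|[u]\|_{\dot H^1(\VR^d)}^2$. For the functional, since $\omega$ is bounded, Cauchy--Schwarz yields $|F([\psi])|\le |\zeta|\,|\omega|^{1/2}\,\|\nabla\psi\|_{L_2(\VR^d)^d}=|\zeta|\,|\omega|^{1/2}\,\|[\psi]\|_{\dot H^1(\VR^d)}$, so $F\in\mathcal L(\dot{BL}(\VR^d),\VR)$. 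As $\dot{BL}(\VR^d)$ is a Hilbert space (recalled above), the Lax--Milgram theorem produces a unique $[Q]\in\dot{BL}(\VR^d)$ with $a([Q],[\psi])=F([\psi])$ for every $[\psi]$, which is the claim.

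There is no genuine obstacle here; the only points requiring care are bookkeeping ones: that both sides of \eqref{eq:exterior_beta0} descend to the quotient by constants, so that uniqueness can only hold up to an additive constant (as reflected by the phrasing of the statement); that continuity of $a$ uses the boundedness of $A$ as an operator in addition to the quadratic-form bound displayed in the coercivity hypothesis; and that the completeness of $\dot{BL}(\VR^d)$, which is the one nontrivial input, is borrowed from the cited references. The lack of symmetry of $A$ causes no difficulty, since Lax--Milgram does not require it.
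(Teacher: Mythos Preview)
Your proposal is correct and matches the paper's approach exactly: the paper's proof reads, in full, ``This is a direct consequence of the theorem of Lax--Milgram.'' Your write-up is simply a fleshed-out version of that one line, and your remark that boundedness of the bilinear form requires an operator-norm bound on $A$ (not just the two-sided quadratic-form bound stated) is a valid observation that the paper glosses over.
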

\begin{proof}
This is a direct consequence of the theorem of Lax-Milgram.
\end{proof}
As shown in \cite{a_ORSU_2012a}, in dimension $d\ge 3$ every equivalence class $[u]$ of $\dot{BL}(\VR^d)$ contains an element $u_0\in [u]$ that is in turn contained in the Banach space 
\ben
\VE_2(\VR^d) := \{ u\in L_{2^*}(\VR^d):\; \nabla u\in L_2(\VR^d)^d\} 
\een
equipped with the norm 
\ben
\|u\|_{\VE_2} := \|u\|_{L_{2^*}} + \|\nabla u\|_{(L_2)^d}. 
\een
This follows at once since $C^\infty_c(\VR^d)/\VR$ is dense in $\dot{BL}(\VR^d)$
 and from the Gagliardo-Nirenberg-Sobolev inequality; see \cite{a_ORSU_2012a}. As a result for 
 $d \ge 3$ we can replace the Beppo-Levi space by $\VE_2(\VR^d)$ and can even consider a more general problem. 
 \begin{lemma}\label{lem:exitence_E}
   Let $d\ge 3$. Suppose that $A$ satisfies \eqref{eq:laplace_unbounded_intro} and $A=A^\top$ a.e. on $\VR^d$. Then for every $F\in \mathcal L(\VE_2(\VR^d),\VR)$ there exists a unique solution $Q \in \VE_2(\VR^d)$ to 
   \ben\label{eq:sol_E2}
\int_{\VR^d} A  \nabla \psi \cdot \nabla Q \;dx  =  F(\psi) \quad \text{ for all } \psi \in \VE_2(\VR^d). 
\een
 \end{lemma}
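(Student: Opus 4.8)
The plan is to recast \eqref{eq:sol_E2} as a coercive variational problem on a Hilbert space and then apply the Lax--Milgram theorem.

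First I would record the Gagliardo--Nirenberg--Sobolev inequality, available precisely because $d\ge 3$ (so that $2^{*}=2d/(d-2)\in(2,\infty)$): there is a constant $C_d>0$ with
\benn
\|u\|_{L_{2^*}(\VR^d)} \le C_d \|\nabla u\|_{L_2(\VR^d)^d} \qquad \text{for all } u\in \VE_2(\VR^d).
\eenn
For $u\in C^\infty_c(\VR^d)$ this is classical, and the extension to all of $\VE_2(\VR^d)$ is the density/approximation argument used in \cite{a_ORSU_2012a}. From this I draw two consequences. First, on $\VE_2(\VR^d)$ the norm $\|\cdot\|_{\VE_2}$ is equivalent to the seminorm $u\mapsto\|\nabla u\|_{L_2(\VR^d)^d}$, since $\|u\|_{\VE_2}\le(1+C_d)\|\nabla u\|_{L_2(\VR^d)^d}$ while trivially $\|\nabla u\|_{L_2(\VR^d)^d}\le\|u\|_{\VE_2}$. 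Second, this seminorm is in fact a norm, induced by the inner product $(u,v)\mapsto\int_{\VR^d}\nabla u\cdot\nabla v\,dx$; since, as recalled just above the statement, for $d\ge 3$ the linear map $u\mapsto[u]$ sends $\VE_2(\VR^d)$, equipped with this norm, isometrically onto the Hilbert space $\dot{BL}(\VR^d)$, it follows that $\VE_2(\VR^d)$ is complete, hence a Hilbert space.

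Next I would introduce the bilinear form
\benn
a(u,v):=\int_{\VR^d} A\nabla u\cdot\nabla v\,dx, \qquad u,v\in\VE_2(\VR^d),
\eenn
and check the hypotheses of Lax--Milgram. Boundedness is immediate from the upper bound on $A$: $|a(u,v)|\le M_2\|\nabla u\|_{L_2(\VR^d)^d}\|\nabla v\|_{L_2(\VR^d)^d}\le M_2\|u\|_{\VE_2}\|v\|_{\VE_2}$. Coercivity follows from the lower bound on $A$ together with the norm equivalence: $a(u,u)\ge M_1\|\nabla u\|_{L_2(\VR^d)^d}^2\ge\frac{M_1}{(1+C_d)^2}\|u\|_{\VE_2}^2$. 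The assumption $A=A^\top$ renders $a$ symmetric, so one may equally invoke the Riesz representation theorem; it is not otherwise needed. Applying Lax--Milgram to the continuous linear functional $F\in\mathcal L(\VE_2(\VR^d),\VR)$ then yields a unique $Q\in\VE_2(\VR^d)$ with $a(\psi,Q)=F(\psi)$ for all $\psi\in\VE_2(\VR^d)$, which is exactly \eqref{eq:sol_E2}.

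The only genuinely non-routine ingredients are the validity of the Gagliardo--Nirenberg--Sobolev inequality on all of $\VE_2(\VR^d)$ (rather than merely on $C^\infty_c(\VR^d)$) and the completeness of $\VE_2(\VR^d)$; both are taken from \cite{a_ORSU_2012a} and rely on $d\ge 3$. Granting these, the lemma is a direct application of Lax--Milgram.
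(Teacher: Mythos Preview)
Your proof is correct and rests on the same key ingredient as the paper's --- the Gagliardo--Nirenberg--Sobolev inequality, which for $d\ge 3$ makes the gradient seminorm equivalent to the full $\VE_2$-norm and thereby supplies coercivity. The routes diverge only in how this is exploited: the paper proceeds via the direct method of the calculus of variations, introducing the energy $\Ce(\varphi)=\tfrac12\int_{\VR^d}A\nabla\varphi\cdot\nabla\varphi\,dx - F(\varphi)$, proving it is coercive and (implicitly) weakly lower semicontinuous on $\VE_2(\VR^d)$, and invoking \cite{b_EKTE_1999a} to obtain a minimiser; you instead verify completeness of $\VE_2(\VR^d)$ with the gradient inner product and apply Lax--Milgram directly. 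Your route is shorter and, as you observe, does not actually use the symmetry hypothesis $A=A^\top$, whereas the paper's energy formulation needs it for the Euler--Lagrange equation to coincide with \eqref{eq:sol_E2}. Either argument is fine here; yours is arguably the cleaner one.
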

 \begin{proof}
A proof can be found in the appendix. 
 \end{proof}

 So for $d\ge 3$ equation \eqref{eq:laplace_unbounded_intro} admits a unique solution in $V:= \VE_2(\VR^d)$ since obviously $F(\varphi) := \int_\omega \zeta \cdot \nabla \varphi \;dx \in \mathcal L(\VE_2(\VR^d),\VR)$.

\subsection{Relation to weighted Sobolev spaces}
Since the exterior equation \eqref{eq:laplace_unbounded_intro} is, as we will see later, of paramount importance for the first topological derivative we review here an alternative choice for the space $V$, namely, a weighted Sobolev Hilbert space. We follow the presentation of \cite{a_AMNOANVA_2014a}, where a more general situation than the following is considered. 

For this purpose we introduce the weight function $w\in L_1(\VR^d)$ defined by 
\ben
w(x) := (1+|x|^2)^{-\gamma_d}
\een
where $\gamma_d := \frac{d}{2} + \delta$ and $\delta \in (0,1/2)$ is arbitrary, but fixed. Since the weight satisfies $|w|^p \le |w|$ on $\VR^d$ for $p\in [1,\infty)$ it also follows that $w\in L_p(\VR^d)$ for all $p\in [1,\infty)$.

\begin{definition}
  The weighted Hilbert Sobolev space $H^1_w(\VR^d)$ 
is defined by  
\ben
H^1_w(\VR^d) := \{u:\VR^d \to \VR \text{ measurable}: \; \sqrt wu \in L_2(\VR^d), \quad \nabla u \in L_2(\VR^d)^d\}. 
\een
The norm on $H^1_w(\VR^d)$ is given by $\|u\|_{H^1_w} := \|\sqrt wu\|_{L_2} + \|\nabla u\|_{(L_2)^2}$.
\end{definition}

The weight $w$ is chosen in such a way that the set of constant functions on $\VR^d$ are contained 
in $H^1_w(\VR^d)$.  Therefore it is clear that \eqref{eq:laplace_unbounded_intro} can only be uniquely solvable in $H^1_w(\VR^d)$ up to a constant. A remedy is to consider the quotient space 
\ben
\dot H^1_w(\VR^d) :=  H^1_w(\VR^d)/{\VR}
\een
and equip this space, as in \cite{a_AMNOANVA_2014a}, with the quotient norm
  \ben
  \|[u]\|_{\dot H^1_w} := \inf_{c\in \VR} \|u+c\|_{H^1_w(\VR^d)},
  \een
  where $[u]$ denote the equivalence classes of $\dot H^1_w(\VR^d)$. In \cite[Cor. C.5, p. 23]{a_AMNOANVA_2014a} it is shown that there is a constant $c>0$, such that, 
  $\|[u]\|_{\dot H^1_w} \le c\|\nabla u\|_{(L_2)^d}$ for all $u\in H^1_w(\VR^d)$.  Therefore existence of a solution to \eqref{eq:laplace_unbounded_intro} follows directly from the theorem of Lax-Milgram. 

In the following lemma let us a agree that the Sobolev conjugate of $2$ in dimension two 
is given by $\infty$, i.e. $2^* := \infty$ if $d=2$. 
\begin{lemma}
  We have $\VE_2(\VR^d) \hookrightarrow H^1_w(\VR^d)$ for all $d \ge 2$, i.e., there is a constant $C>0$, such that
  \ben
  \|u\|_{H^1_w} \le C \|u\|_{\VE_2} \quad \text{ for all } u\in \VE_2(\VR^d).  
  \een
\end{lemma}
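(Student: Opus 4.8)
The plan is to notice that the two norms in question differ only in their lower‑order terms: the gradient contributions $\|\nabla u\|_{(L_2)^d}$ are literally identical. Hence the whole content of the lemma is the estimate
\benn
\|\sqrt{w}\,u\|_{L_2(\VR^d)} \le C\,\|u\|_{L_{2^*}(\VR^d)} \qquad \text{for all } u\in \VE_2(\VR^d),
\eenn
which in particular shows that $\sqrt{w}\,u\in L_2(\VR^d)$ whenever $u\in \VE_2(\VR^d)$, so that $u\in H^1_w(\VR^d)$ and the embedding is well defined. Thus I only need to bound $\int_{\VR^d} w|u|^2\,dx$ by (a constant times) $\|u\|_{L_{2^*}}^2$, and then add back the common gradient term.

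First I would treat the case $d\ge 3$, where $2^* = \tfrac{2d}{d-2}\in(2,\infty)$. Applying Hölder's inequality to the product $w\cdot|u|^2$ with the conjugate exponents $p=\tfrac{2^*}{2}=\tfrac{d}{d-2}$ and $p'=\tfrac{d}{2}$ gives
\benn
\int_{\VR^d} w|u|^2\,dx \le \left(\int_{\VR^d} |u|^{2^*}\,dx\right)^{2/2^*}\left(\int_{\VR^d} w^{d/2}\,dx\right)^{2/d} = \|u\|_{L_{2^*}(\VR^d)}^2\,\|w\|_{L_{d/2}(\VR^d)}.
\eenn
Since $|w|^p\le|w|$ pointwise for every $p\ge 1$, the weight lies in $L_p(\VR^d)$ for all finite $p$, in particular in $L_{d/2}(\VR^d)$; hence the right‑hand side is finite and $\|\sqrt{w}\,u\|_{L_2}\le \|w\|_{L_{d/2}}^{1/2}\|u\|_{L_{2^*}}$.

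Next, for $d=2$ the exponents degenerate to $p=\infty$, $p'=1=\tfrac{d}{2}$, and the same bound is obtained by the elementary estimate $\int_{\VR^2} w|u|^2\,dx \le \|u\|_{L_\infty(\VR^2)}^2\int_{\VR^2} w\,dx = \|w\|_{L_1(\VR^2)}\,\|u\|_{L_\infty}^2$, using that $w\in L_1(\VR^d)$ by construction and that $2^*=\infty$ by the stated convention. So in every dimension $d\ge 2$ we have $\|\sqrt{w}\,u\|_{L_2}\le \|w\|_{L_{d/2}}^{1/2}\|u\|_{L_{2^*}}$.

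Finally I would assemble the pieces: with $C:=\max\{\|w\|_{L_{d/2}}^{1/2},\,1\}$,
\benn
\|u\|_{H^1_w} = \|\sqrt{w}\,u\|_{L_2} + \|\nabla u\|_{(L_2)^d} \le C\big(\|u\|_{L_{2^*}} + \|\nabla u\|_{(L_2)^d}\big) = C\,\|u\|_{\VE_2},
\eenn
which is exactly the claimed continuous embedding. There is no genuine obstacle here; the only point needing a little care is the degenerate case $d=2$, where $2^*=\infty$ and one cannot apply Hölder with a finite exponent on the $|u|^2$ factor — but since $w\in L_1(\VR^d)$ this case is handled directly, and it fits the general formula because $\tfrac{d}{2}=1$ there, so the single constant $\|w\|_{L_{d/2}}^{1/2}$ covers all $d\ge 2$.
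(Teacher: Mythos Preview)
Your proposal is correct and follows essentially the same route as the paper: reduce to bounding $\|\sqrt{w}\,u\|_{L_2}$ by $\|u\|_{L_{2^*}}$ via H\"older's inequality with the conjugate pair $(2^*/2,\,d/2)$, treat $d=2$ separately using $w\in L_1$ and $u\in L_\infty$, and then add back the identical gradient term. Your write-up is in fact slightly more explicit than the paper's in assembling the final constant.
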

\begin{proof}
  Let $u\in \VE_2(\VR^d)$ be given so that $u\in L_{2^*}(\VR^d)$. In case $d\ne 2$, we have $2^* = \frac{2d}{d-2}$. Therefore the H\"older conjugate of 
  $2^*/2$ is given by $\frac{2^*}{2^*-2}=d/2$ and H\"older's inequality yields
  \ben\label{eq:est_p}
  \int_{\VR^d} w u^2\; dx \le \|w\|_{L_{d/2}(\VR^d)} \|u\|_{L_{p^*}(\VR^d)}^2.
  \een
  Since $d \ge 2$ we deduce $\sqrt w u \in L_2(\VR^d)$ and since by definition also $\nabla u \in L_2(\VR^d)^d$ we deduce $\VE_2(\VR^d)\subset H^1_w(\VR^d)$ and the continuity of the embedding follows from \eqref{eq:est_p}.
  In case $d=2$ we have $2^* = \infty$ and thus  H\"older's inequality directly gives \eqref{eq:est_p} and thus the continuous embedding. 
\end{proof}

\section{The topological derivative  via Lagrangian}\label{sec:4}
In this section we show how Theorem~\ref{thm:diff_lagrange} of Section~\ref{sec:2} can be used to compute the topological derivative for a semilinear transmission problem. Our approach is related to the one of \cite{a_AM_2006a} (see also \cite{phd_AM_2003a}), where also a perturbed adjoint equation is used, too. However the main difference here is that we only need to work with weakly converging subsequences and do not need to know any asymptotic behaviour of the limiting function. 

\subsection{Weak formulation and apriori estimates}
In the following exposition we restrict ourselves to the shape function
\ben\label{eq:cost_simple}
J(\Omega) = \int_{\Dsf} u^2 \;dx,
\een
where $u=u_\Omega\in H^1_0(\Dsf)\cap L_\infty(\Dsf)$ is the weak solution of (S): 
\ben\label{eq:state_weak_chi}
\int_{\Dsf} \beta_\Omega \nabla u\cdot \nabla \varphi + \varrho_\Omega(u)\varphi \; dx = \int_{\Dsf} f_\Omega\varphi\;dx \quad \text{ for all }  \varphi \in H^1_0(\Dsf),
\een
where $\beta_\Omega:\VR^d \to \VR^{d\times d}$ and $f_\Omega:\VR^d\to \VR$ are defined by 
\ben
\beta_\Omega(x) := \left\{
  \begin{array}{cl}
    \beta_1(x) & \text{ for } x\in \Omega\\
    \beta_2(x) & \text{ for } x\in \VR^d\setminus \overbar \Omega
\end{array}\right.,  \quad f_\Omega(x) := \left\{
  \begin{array}{cl}
    f_1(x) & \text{ for } x\in \Omega\\
    f_2(x) & \text{ for } x\in \VR^d\setminus \overbar \Omega
\end{array}\right., 
\een
and similarly $\varrho_\Omega$ is defined by
\ben
\varrho_\Omega(u) := \left\{
  \begin{array}{cl}
   \varrho_1(u) & \text{ for } x\in \Omega\\
   \varrho_2(u) & \text{ for } x\in \VR^d\setminus \overbar \Omega.
\end{array}\right.
\een
Notice that $\beta_\Omega = \beta_1 \chi_{\Omega} + \beta_2 \chi_{\VR^d\setminus \Omega}$, $f_\Omega = f_1 \chi_{\Omega} + f_2 \chi_{\VR^d\setminus \Omega}$ and 
$\varrho_\Omega(u) = \varrho_1(u)\chi_{\Omega} + \varrho_2(u) \chi_{\VR^d\setminus \Omega}$. 

It can be checked that the following proofs remain true when the shape function \eqref{eq:cost_simple} is replaced by (C) from the introduction under the assumption that $j$ 
is sufficiently smooth. However, in favour of a clearer presentation we use 
the simplified cost function \eqref{eq:cost_simple}. The functions $\beta_i, \varrho_i$ and $f_i$ are specified in the following assumption. The extremal case where $\beta_1,\varrho_1,f_1$ are zero will be discussed in the last section. 
\begin{assumption}\label{ass:varrho_monotone}
  \begin{itemize}\setlength{\itemsep}{1pt}
    \item[(a)] For $i=1,2$, we assume that $\beta_i\in C^1(\VR^d)^{d\times d}$ and that there are constants $\beta_m,\beta_M>0$, such that 
      \ben\label{eq:bound_beta}
      \beta_m|v|^2 \le \beta_i(x)v\cdot v \le \beta_M|v|^2 \quad \text{ for all } x\in \VR^d,\; v\in \VR^d.
      \een
    \item[(b)] For $i=1,2$, we assume $\varrho_i \in C^1(\VR)$, $\varrho_i(0)=0$ and the monotonicity condition
\ben\label{eq:monotone_varrho}
(\varrho_i(x)-\varrho_i(y))(x-y) \ge 0 \quad \text{ for all } x,y\in \VR. 
\een
\item[(c)] For $i=1,2$, we assume $f_i\in H^1(\Dsf)$ if $f_1=f_2$ and $f_i\in H^1(\Dsf)\cap C(\Dsf)$ if $f_1\ne f_2$. 
\end{itemize}
\end{assumption}

Notice that since for $x\in \Dsf$ the matrix $\beta_\Omega(x)$ is either equal to $\beta_1(x)$ or $\beta_2(x)$ and in view of the bound \eqref{eq:bound_beta}, we have 
\ben\label{eq:betam}
\beta_m|v|^2 \le \beta_\Omega(x)v\cdot v \quad \text{ for all } x\in \VR^d, \; v\in \VR^d.
\een
Similarly, in view of the monotonicity property \eqref{eq:monotone_varrho} and $\varrho_i(0)=0$, we get  
\ben\label{eq:varphim}
0 \le \varrho_\Omega(x)x \quad \text{ for all } x\in \VR^d.
\een
\begin{lemma}\label{lem:aprior_u}
  \begin{itemize}
    \item[(i)] Let $f\in L_r(\Dsf)$, $r>d/2$. Then for every measurable set $\Omega\subset \Dsf$ there is a unique solution $u_\Omega$ of \eqref{eq:state_weak_chi}. Moreover, there is a constant $C$ independent of $\Omega$, such that 
      \ben\label{eq:bound_u}
      \|u_\Omega\|_{L_\infty(\Dsf)} +  \|u_\Omega\|_{H^1_0(\Dsf)}\le C\|f\|_{L_r(\Dsf)}.
      \een
    \item[(ii)]
      For every $z\in \Dsf\setminus \overbar \Omega$, we find $\delta>0$, such that $u_{\Omega}\in H^3(B_\delta(z))$.
  \end{itemize}
\end{lemma}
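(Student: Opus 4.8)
The plan is to prove existence and uniqueness in part (i) by the standard Browder--Minty theory for monotone operators, then obtain the uniform $L^\infty$ bound by a Stampacchia truncation argument, and finally derive the interior $H^3$-regularity of part (ii) from classical elliptic regularity applied to the equation away from the interface.

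\medskip
\noindent\textbf{Part (i): existence, uniqueness, uniform bounds.}
First I would define the nonlinear operator $A_\Omega:H^1_0(\Dsf)\to H^{-1}(\Dsf)$ by
\[
\langle A_\Omega u,\varphi\rangle := \int_{\Dsf} \beta_\Omega\nabla u\cdot\nabla\varphi + \varrho_\Omega(u)\varphi\;dx,
\]
so that \eqref{eq:state_weak_chi} reads $A_\Omega u = f$ in $H^{-1}(\Dsf)$. The map is well-defined and bounded on $H^1_0(\Dsf)$: the principal part is linear and bounded by \eqref{eq:bound_beta}, and since $\varrho_i\in C^1(\VR)$ with $\varrho_i(0)=0$, writing $\varrho_i(u)=u\int_0^1\varrho_i'(tu)\,dt$ and using the monotonicity \eqref{eq:monotone_varrho} (which forces at most linear growth once combined with $\varrho_i'\ge 0$ being bounded on compacts --- or one simply notes the lower-order term is handled directly in the estimates via \eqref{eq:varphim}) one checks the lower-order term maps $H^1_0(\Dsf)\subset L_2(\Dsf)$ into $L_2(\Dsf)\subset H^{-1}(\Dsf)$ boundedly. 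Monotonicity of $A_\Omega$ follows by adding the inequality $\beta_m|\nabla(u-v)|^2\le(\beta_\Omega\nabla(u-v))\cdot\nabla(u-v)$ coming from \eqref{eq:betam} to the pointwise monotonicity $(\varrho_\Omega(u)-\varrho_\Omega(v))(u-v)\ge0$ from \eqref{eq:monotone_varrho}; this simultaneously gives \emph{strict} monotonicity via the Poincar\'e inequality on $H^1_0(\Dsf)$, hence uniqueness, and coercivity, $\langle A_\Omega u,u\rangle\ge \beta_m\|\nabla u\|_{L_2}^2 + 0 \ge c\|u\|_{H^1_0}^2$ using \eqref{eq:varphim}. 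Hemicontinuity is immediate from $\varrho_i\in C^1$ and dominated convergence. The Browder--Minty theorem then gives a unique solution $u_\Omega$, and testing with $\varphi=u_\Omega$ together with \eqref{eq:varphim} and Poincar\'e yields $\|u_\Omega\|_{H^1_0(\Dsf)}\le C\|f\|_{H^{-1}(\Dsf)}\le C\|f\|_{L_r(\Dsf)}$ with $C$ depending only on $\beta_m$, $d$ and $\Dsf$, not on $\Omega$. For the $L^\infty$ bound I would run a Stampacchia/De Giorgi truncation: test \eqref{eq:state_weak_chi} with $\varphi=(u_\Omega-k)^+\in H^1_0(\Dsf)$; the lower-order term contributes $\int\varrho_\Omega(u_\Omega)(u_\Omega-k)^+\,dx\ge 0$ on $\{u_\Omega>k\ge 0\}$ by \eqref{eq:varphim} and monotonicity, so it can be dropped, leaving $\beta_m\int_{\{u_\Omega>k\}}|\nabla u_\Omega|^2\le \int_{\Dsf} f(u_\Omega-k)^+\,dx$; the standard iteration lemma (e.g. Stampacchia's lemma, using $r>d/2$ so that $f\in L_r$ lands in the right Lebesgue scale via Sobolev embedding) yields $\|u_\Omega^+\|_{L_\infty}\le C\|f\|_{L_r(\Dsf)}$, and symmetrically for $u_\Omega^-$, with $C$ independent of $\Omega$ because only $\beta_m$ and $\Dsf$ enter. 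Note $f=f_\Omega$ is bounded in $L_r(\Dsf)$ uniformly in $\Omega$ since $f_1,f_2\in H^1(\Dsf)\hookrightarrow L_r(\Dsf)$ for $r$ in the relevant range (for $d\le 3$, $H^1\hookrightarrow L_6$), so the hypothesis ``$f\in L_r(\Dsf)$, $r>d/2$'' is met with a uniform constant.

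\medskip
\noindent\textbf{Part (ii): interior $H^3$-regularity away from the interface.}
Fix $z\in\Dsf\setminus\overbar\Omega$. Since $\Dsf\setminus\overbar\Omega$ is open, choose $\delta>0$ with $\overbar B_{2\delta}(z)\subset\Dsf\setminus\overbar\Omega$. On $B_{2\delta}(z)$ the coefficients reduce to $\beta_\Omega=\beta_2\in C^1$ and the equation is $-\Div(\beta_2\nabla u_\Omega) = f_2 - \varrho_2(u_\Omega)$ in the weak sense with $H^1_0$-data irrelevant locally. The right-hand side is in $H^1(B_{2\delta}(z))$: $f_2\in H^1(\Dsf)$ by Assumption~\ref{ass:varrho_monotone}(c), and $\varrho_2(u_\Omega)\in H^1$ by the chain rule, since $u_\Omega\in H^1\cap L_\infty$ and $\varrho_2\in C^1$ gives $\nabla(\varrho_2(u_\Omega))=\varrho_2'(u_\Omega)\nabla u_\Omega\in L_2$ with $\varrho_2'(u_\Omega)$ bounded by the $L^\infty$ bound from part (i). Interior elliptic regularity (bootstrapping: $C^1$ coefficients and $L_2$ RHS give $u_\Omega\in H^2_{\mathrm{loc}}$, then $C^1$ coefficients and $H^1$ RHS give $u_\Omega\in H^3_{\mathrm{loc}}$) then yields $u_\Omega\in H^3(B_\delta(z))$ after shrinking the ball.

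\medskip
\noindent\textbf{Main obstacle.}
The routine parts are the monotone-operator existence/uniqueness and the interior regularity bootstrap. The genuine point requiring care is the \emph{$\Omega$-uniformity} of the $L^\infty$ (and $H^1$) bound: one must make sure the Stampacchia iteration constant depends only on $\beta_m$, $d$, $|\Dsf|$ and $\|f\|_{L_r}$, and in particular not on the regularity or even measurability structure of the interface $\partial\Omega$ --- which works precisely because dropping the monotone lower-order term via \eqref{eq:varphim} decouples the estimate from $\varrho_\Omega$, and because only the lower coercivity constant $\beta_m$ of the \emph{full} matrix $\beta_\Omega$ (valid for \emph{every} measurable $\Omega$ by \eqref{eq:betam}) enters the energy inequality.
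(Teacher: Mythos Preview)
Your proposal is essentially correct and follows the same route as the paper, which for part~(i) simply cites \cite[Theorem~4.5]{b_TR_2005a} and for part~(ii) invokes interior elliptic regularity for the equation $-\Div(\beta_2\nabla u)=f_2-\varrho_2(u)$ on $U=\Dsf\setminus\overbar\Omega$, exactly as you do. Your write-up merely unpacks what is behind the reference.

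There is one genuine slip in part~(i): the parenthetical claim that monotonicity of $\varrho_i$ ``forces at most linear growth'' is false (e.g.\ $\varrho(t)=t^3$ or $e^t-1$). Consequently the Nemytskii map $u\mapsto\varrho_\Omega(u)$ need not send $H^1_0(\Dsf)$ into $H^{-1}(\Dsf)$, and Browder--Minty cannot be applied directly on $H^1_0$ as written. The standard fix---and this is what lies behind the cited theorem---is to truncate $\varrho_i$ at level $k$, solve the truncated problem by Browder--Minty, run your Stampacchia iteration (which, as you correctly note, drops the nonlinear term via its sign and therefore gives an $L_\infty$ bound \emph{independent of $k$ and of $\Omega$}), and then remove the truncation once $k$ exceeds that bound. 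Your ``main obstacle'' paragraph already identifies the crucial mechanism: only $\beta_m$, $d$, $|\Dsf|$ and $\|f\|_{L_r}$ enter the iteration, which is precisely what gives uniformity in $\Omega$ and, by the same token, in the truncation level.
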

\begin{proof}
  (i) Our assumptions imply that we can apply \cite[Theorem~4.5]{b_TR_2005a} which gives the existence of a solution to \eqref{eq:state_weak_chi} and also the apriori bound \eqref{eq:bound_u}. As pointed out in this reference the constant $C$ is independent of the nonlinearity $\varrho_\Omega$. 

(ii) 
Let $U := \Dsf\setminus \overbar \Omega$ and $z\in U$. The restriction of $u$ to $U$ solves 
\ben\label{eq:state_weak_chi1}
\int_{U} \beta_2 \nabla u\cdot \nabla \varphi \; dx = \int_{U} \tilde f\varphi\;dx\quad \text{ for all }  \varphi \in H^1_0(U),
\een
with right-hand side $\tilde f(x) := f_2(x) - \varrho_2(u(x))$. Since $\nabla \tilde f = \nabla f_2 - \varrho_2'(u)\nabla u \in L_2(U)^d$ we have $\tilde f\in  H^1(U)$. Hence $u\in H^3_{\text{loc}}(U)$ by standard regularity theory for elliptic PDEs; see, e.g.,  \cite[Thm. 2, p. 314]{b_EV_1998a}. Since $U$ is open we can choose $\delta>0$ such that $B_\delta(z)\Subset U$. This finishes the proof.   
\end{proof}

\begin{remark}
  Although we restrict ourselves to Dirichlet boundary conditions in (S) other boundary conditions, e.g., Neumann boundary conditions, can be considered as well. This only requires minimal changes in the following analysis and we will make remarks at the relevant places.  
\end{remark}

\subsection{The parametrised Lagrangian}
From now on we fix:
\begin{itemize}\setlength\itemsep{0.3em}
  \item an open and bounded set $\omega\subset \VR^d$ with $0\in \omega$,
  \item an open set $\Omega \Subset \Dsf$ and a point $z\in \Dsf\setminus \overbar \Omega$,
  \item the perturbation $\Omega_\eps := \Omega\cup \omega_\eps(z)$, where $\omega_\eps(z) := z+\eps \omega$ and $\eps\in[0,\tau]$, $\tau >0$.
\end{itemize}
To simplify notation we will often write $\omega_\eps$ instead of $\omega_\eps(z)$. Let $X=Y=H^1_0(\Dsf)$ and introduce the Lagrangian $ G:[0,\tau]\times X\times Y \to \VR$ associated with the perturbation $\Omega_\eps$ by 
\ben\label{eq:lagrange_scale_eps}
G(\eps,\fu,\fp) := \int_{\Dsf} \fu^2\;dx + \int_{\Dsf} \beta_{\eps} \nabla \fu \cdot \nabla\fp +  \varrho_\eps(\fu)\fp \;dx - \int_{\Dsf} f_\eps\fp \;dx,
\een
where we use the abbreviations
\ben\label{eq:abb_beta}
\begin{split}
  \beta_\eps  := \beta_1 \chi_{\Omega_\eps} + \beta_2 \chi_{\VR^d\setminus  \Omega_\eps},& \quad  \quad f_\eps  := f_1 \chi_{\Omega_\eps} + f_2 \chi_{\VR^d\setminus  \Omega_\eps}, \quad \varrho_\eps(u) := \varrho_1(u) \chi_{\Omega_\eps} + \varrho_2(u) \chi_{\VR^d\setminus \Omega_\eps}.
\end{split}
\een
We are now going to verify that Hypotheses~(H0)-(H4) are satisfied with $\ell(\eps) = |\omega_\eps|$. Moreover, we will determine the explicit form of $R(u,p)$. 

\begin{remark}[Removing an inclusion]
  We only treat the case of "adding" a hole here, i.e., $\Omega_\eps := \Omega\cup \omega_\eps(z)$ for $z\in \Dsf\setminus \overbar \Omega$. The second case of "removing" a hole, i.e., $\Omega_\eps := \Omega \setminus \overbar \omega_\eps(z)$ for $z\in \Omega$ can be dealt with in the same way.  
\end{remark}

\subsection{Analysis of the perturbed state equation}
The \emph{perturbed state equation} reads: find $u_\eps \in H^1_0(\Dsf)$ such that
\ben
\partial_\fp G(\eps, u_\eps, 0)(\varphi)=0 \quad \text{ for all } \varphi \in H^1_0(\Dsf),
\een
or equivalently $u_\eps \in H^1_0(\Dsf)$ satisfies,
\ben\label{eq:PDE_constraint_topo}
\int_{\Dsf} \beta_{\eps} \nabla u_\eps\cdot \nabla\varphi + \varrho_\eps(u_\eps)\varphi \;dx = \int_{\Dsf} f_\eps\varphi \;dx \quad \text{ for all } \varphi \in H^1_0(\Dsf).
\een
Henceforth we write $u:=u_0$ to simplify notation. Since \eqref{eq:PDE_constraint_topo} is precisely \eqref{eq:state_weak_chi}
with $\Omega = \Omega_\eps$, we infer from Lemma~\ref{lem:aprior_u} that \eqref{eq:PDE_constraint_topo} admits a unique solution. This means that $E(\eps) = \{u_\eps\}$ is a singleton and thus $E(\eps)=X(\eps)$ and Hypothesis~(H0) is satisfied. From this and Assumption~\ref{ass:varrho_monotone} we also infer that Hypothesis~(H1) is 
satisfied. We proceed by shoing a H\"older-type estimate for $(u_\eps)$.  
\begin{lemma}\label{lem:u_ueps}
	There is a constant $C>0$, such that for all small $\eps>0$, 
	\ben\label{eq:est_u_eps_D}
	\|u_\eps - u\|_{H^1(\Dsf)} \le C\eps^{d/2}. 
	\een
\end{lemma}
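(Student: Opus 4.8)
The plan is to estimate the difference $w_\eps := u_\eps - u$ by testing the two weak formulations \eqref{eq:PDE_constraint_topo} (for $\eps$ and for $0$) against $\varphi = w_\eps \in H^1_0(\Dsf)$ and subtracting. Since $\beta_\eps = \beta_0$ and $f_\eps = f_0$ and $\varrho_\eps = \varrho_0$ outside the small set $\omega_\eps := \omega_\eps(z) \Subset \Dsf\setminus\overbar\Omega$ (for $\eps$ small enough, since $z\notin\overbar\Omega$ and $\omega_\eps$ shrinks to $z$), subtracting gives
\ben
\int_{\Dsf} \beta_\eps \nabla w_\eps\cdot\nabla w_\eps \,dx + \int_{\Dsf}(\varrho_\eps(u_\eps)-\varrho_\eps(u))w_\eps\,dx = \int_{\omega_\eps}(\beta_2-\beta_1)\nabla u\cdot\nabla w_\eps\,dx + \int_{\omega_\eps}(\varrho_2(u)-\varrho_1(u))w_\eps\,dx - \int_{\omega_\eps}(f_2-f_1)w_\eps\,dx.
\een
On the left, the first term is bounded below by $\beta_m\|\nabla w_\eps\|_{L_2(\Dsf)}^2$ by \eqref{eq:betam}, and the second term is nonnegative: on $\omega_\eps$ both points $u_\eps,u$ are evaluated with the same branch $\varrho_1$ or $\varrho_2$ (actually with $\varrho_2$ there), so \eqref{eq:monotone_varrho} applies; more carefully, $\varrho_\eps(u_\eps)-\varrho_\eps(u)$ times $w_\eps = u_\eps-u$ is pointwise $\ge 0$ by monotonicity of whichever $\varrho_i$ is active at that point. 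Hence the left side is $\ge \beta_m\|\nabla w_\eps\|_{L_2(\Dsf)}^2$, and combined with Poincar\'e on $H^1_0(\Dsf)$ it controls $\|w_\eps\|_{H^1(\Dsf)}^2$.

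For the right side, I would bound each of the three integrals over $\omega_\eps$ using H\"older's inequality, exploiting that $|\omega_\eps| = \eps^d|\omega|$ and that $u$ is smooth near $z$ (Lemma~\ref{lem:aprior_u}(ii) gives $u\in H^3(B_\delta(z))$, in particular $\nabla u$ is bounded near $z$, and $u$, $f_i$ are bounded/continuous near $z$ by Assumption~\ref{ass:varrho_monotone}(c)). Thus $\int_{\omega_\eps}(\beta_2-\beta_1)\nabla u\cdot\nabla w_\eps\,dx \le \|\beta_2-\beta_1\|_{L_\infty}\|\nabla u\|_{L_\infty(\omega_\eps)}|\omega_\eps|^{1/2}\|\nabla w_\eps\|_{L_2(\Dsf)} \lesssim \eps^{d/2}\|\nabla w_\eps\|_{L_2}$, and similarly the other two terms are $\lesssim \eps^{d/2}\|w_\eps\|_{L_2(\Dsf)} \lesssim \eps^{d/2}\|\nabla w_\eps\|_{L_2}$ (using boundedness of $u$, $\varrho_i(u)$, $f_i$ near $z$ and H\"older with the small set $\omega_\eps$). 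Collecting, $\beta_m\|\nabla w_\eps\|_{L_2}^2 \le C\eps^{d/2}\|\nabla w_\eps\|_{L_2}$, which yields $\|\nabla w_\eps\|_{L_2(\Dsf)} \le C'\eps^{d/2}$ and then \eqref{eq:est_u_eps_D} via Poincar\'e.

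The only real subtlety — and the main point to get right — is the sign/monotonicity bookkeeping for the $\varrho$-term: one must verify that $(\varrho_\eps(u_\eps(x))-\varrho_\eps(u(x)))(u_\eps(x)-u(x))\ge 0$ for a.e.\ $x$, which holds because at each fixed $x$ the function $\varrho_\eps(\cdot)$ agrees with one of $\varrho_1,\varrho_2$, each monotone by \eqref{eq:monotone_varrho}; so the nonlinear term only helps. A secondary point is ensuring $\eps$ is small enough that $\overbar{\omega_\eps(z)}\subset \Dsf\setminus\overbar\Omega$ and $\omega_\eps(z)\subset B_\delta(z)$ so that the local regularity of $u$ at $z$ is available; this is immediate since $\omega$ is bounded and $z\in\Dsf\setminus\overbar\Omega$ is an interior point of that open set. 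No remainder estimates or asymptotic analysis are needed here — this is a soft energy estimate.
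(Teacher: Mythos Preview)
Your proof is correct and follows essentially the same route as the paper: subtract the two weak formulations, test with $w_\eps=u_\eps-u$, use the uniform ellipticity of $\beta_\eps$ and the monotonicity of $\varrho_\eps$ to control the left side from below, and bound the three right-hand integrals over $\omega_\eps$ by H\"older using $|\omega_\eps|^{1/2}=\eps^{d/2}|\omega|^{1/2}$ together with the local $C^1$ regularity of $u$ near $z$. One harmless slip: on $\omega_\eps\subset\Omega_\eps$ the active nonlinearity in $\varrho_\eps$ is $\varrho_1$, not $\varrho_2$, but as you note this is irrelevant since both are monotone.
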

\begin{proof}
   We obtain from \eqref{eq:PDE_constraint_topo}
	\ben\label{eq:diff_u_ueps}
	\begin{split}
	  \int_{\Dsf} \beta_{\eps} \nabla (u_\eps-u)\cdot \nabla\varphi \;dx + \int_{\Dsf} (\varrho_\eps(u_\eps) - \varrho_\eps(u)) \varphi\; dx   = &  - \underbrace{\int_{\omega_\eps} (\beta_1-\beta_2)\nabla u\cdot \nabla \varphi\;dx}_{=:\rom{1}(\eps,\varphi)} \\
																		     & - \underbrace{\int_{\omega_\eps} (\varrho_1(u)-\varrho_2(u)) \varphi\; dx}_{=:\rom{2}(\eps,\varphi)}\\
																		     & + \underbrace{\int_{\omega_\eps} (f_1-f_2) \varphi\; dx}_{=:\rom{3}(\eps,\varphi)}
        \end{split}
	\een
	for all $\varphi \in H^1_0(\Dsf)$. Hence, since $u\in C^1(\overbar B_\delta(z))$ for $\delta >0$ sufficiently small, we can apply H\"older's inequality to obtain
	\ben\label{eq:I_III}
         \begin{split}
	   |\rom{1}(\eps, \varphi)| & \le \|\beta_1-\beta_2\|_{C(\overbar B_\delta(z))^{d\times d}} \|\nabla u\|_{C(\overbar B_\delta(z))^d} \sqrt{|\omega_\eps|}\|\nabla \varphi\|_{L_{2}(\Dsf)^d}\\
	   |\rom{2}(\eps, \varphi)| & \le  \|\varrho_1(u)-\varrho_2(u)\|_{C(\overbar B_\delta(z))} \sqrt{|\omega_\eps|}\|\varphi\|_{L_{2}(\Dsf)}
      \end{split}
	\een
	and
	\ben
	|\rom{3}(\eps,\varphi)| \le \|f_1-f_2\|_{L_\infty(B_\delta(z))} \sqrt{|\omega_\eps|}\|\varphi\|_{L_{2}(\Dsf)}.
	\een
	Now testing \eqref{eq:diff_u_ueps} with $\varphi = u_\eps - u$ and using \eqref{eq:I_III} together with  Assumption~\ref{ass:varrho_monotone},(a)-(b) lead to the desired estimate.   
\end{proof}

\subsection{Analysis of the averaged adjoint equation}
We introduce for $\eps\in [0,\tau]$  the (not necessarily symmetric) bilinear form $b_\eps:H^1_0(\Dsf) \times H^1_0(\Dsf)\to \VR$ by 
\ben
b_\eps(\psi, \varphi) := \int_{\Dsf} \beta_{\eps} \nabla \psi\cdot \nabla \varphi + \left(\int_0^1 \varrho_\eps'(su_\eps+(1-s)u)\;ds\right) \varphi \psi \;dx,
\een
where $\varrho_\eps'(u):= \varrho_1'(u)\chi_{\Omega_\eps} + \varrho_2'(u)\chi_{\VR^d\setminus \Omega_\eps}$
Then the averaged adjoint equation \eqref{eq:aa_equation} for the Lagrangian $G$ given by \eqref{eq:lagrange_scale_eps}  reads: find $q_\eps\in H^1_0(\Dsf)$ such that
\ben\label{eq:perturbed_adjoint_topo}
b_\eps(\psi,\fp^\eps)= - \int_{\Dsf} (u+u_\eps) \psi \;dx
\een
for all $\psi \in H^1_0(\Dsf)$. In view of Assumption~\ref{ass:varrho_monotone} we have $\varrho_\eps'\ge 0$ and $\beta_\eps\ge \beta_m I$ and thus $b_\eps$ is coercive,
\ben
b_\eps(\psi,\psi) \ge \beta_m \|\nabla \psi\|_{L_2(\Dsf)^d}^2 \quad \text{ for all } \psi\in H^1_0(\Dsf), \; \eps\in [0,\tau]. 
\een
As for the state equation, we use the notation $q:=q^0$. 
\begin{lemma}
  \begin{itemize}
    \item[(i)] For each $\eps\in[0,\tau]$ equation \eqref{eq:perturbed_adjoint_topo} admits a unique solution. 
  \item[(ii)] We find for every $z\in \Dsf\setminus \overbar \Omega$ 
    a number $\delta >0$, such that $q\in H^3(B_\delta(z))\subset C^1(\overbar B_\delta(z))$ for $d\in \{2,3\}$. 
  \end{itemize}
\end{lemma}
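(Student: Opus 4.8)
The plan is to treat the two assertions separately: (i) follows from the Lax--Milgram lemma applied to the bilinear form $b_\eps$, and (ii) is an interior elliptic regularity bootstrap carried out on a small ball around $z$ inside $U:=\Dsf\setminus\overbar\Omega$, in the spirit of the proof of Lemma~\ref{lem:aprior_u}(ii). For (i), I would check that $b_\eps$ is bounded and coercive on $H^1_0(\Dsf)\times H^1_0(\Dsf)$ and that the right-hand side of \eqref{eq:perturbed_adjoint_topo} lies in $\mathcal L(H^1_0(\Dsf),\VR)$. Coercivity, namely $b_\eps(\psi,\psi)\ge\beta_m\|\nabla\psi\|_{L_2(\Dsf)^d}^2$, was already recorded above, and combined with the Poincar\'e inequality on the bounded set $\Dsf$ it gives coercivity in the full $H^1_0(\Dsf)$ norm. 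For boundedness the only nontrivial point is the zeroth-order coefficient $c_\eps:=\int_0^1\varrho_\eps'(su_\eps+(1-s)u)\,ds$: by Lemma~\ref{lem:aprior_u}(i) there is $M>0$ with $\|u\|_{L_\infty(\Dsf)}+\|u_\eps\|_{L_\infty(\Dsf)}\le M$ for all small $\eps$, so $su_\eps+(1-s)u$ takes values in the fixed compact interval $[-M,M]$, on which the continuous functions $\varrho_1',\varrho_2'$ are bounded by some $K$; hence $\|c_\eps\|_{L_\infty(\Dsf)}\le K$ and $|b_\eps(\psi,\varphi)|\le\beta_M\|\nabla\psi\|_{L_2(\Dsf)^d}\|\nabla\varphi\|_{L_2(\Dsf)^d}+K\|\psi\|_{L_2(\Dsf)}\|\varphi\|_{L_2(\Dsf)}$. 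Since $u+u_\eps\in L_2(\Dsf)$, the functional $\psi\mapsto-\int_{\Dsf}(u+u_\eps)\psi\,dx$ is continuous on $H^1_0(\Dsf)$, and Lax--Milgram (which requires no symmetry of $\beta_\eps$) yields the unique solution $q^\eps$.

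For (ii), I would specialise to $\eps=0$ and write $q=q^0$. Testing \eqref{eq:perturbed_adjoint_topo} with functions supported in $U$ (on which $\beta_0=\beta_2$, $c_0=\varrho_2'(u)$, and the data are $-\int 2u\,\psi$) shows that $q$ solves, distributionally on $U$,
\[
-\Div(\beta_2^{\top}\nabla q)+\varrho_2'(u)\,q=-2u.
\]
By Lemma~\ref{lem:aprior_u}(ii) I would pick $\delta_0>0$ with $B_{\delta_0}(z)\Subset U$ and $u\in H^3(B_{\delta_0}(z))$; since $d\in\{2,3\}$ this already gives $u\in C^1(\overbar{B_{\delta_0}(z)})$, so $u$ and $\varrho_2'(u)$ are bounded continuous there and $-2u\in H^1(B_{\delta_0}(z))$. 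Then I would bootstrap interior regularity for the divergence-form operator $-\Div(\beta_2^{\top}\nabla\,\cdot\,)$ with $C^1$ principal part, using the elliptic regularity theory cited in the proof of Lemma~\ref{lem:aprior_u}(ii): the right-hand side $-2u-\varrho_2'(u)q$ is in $L_2$, hence $q\in H^2_{\text{loc}}$; by the Sobolev embedding $H^2\hookrightarrow L_\infty$ valid in dimension at most $3$, the term $\varrho_2'(u)q$ is then locally bounded, the data lie locally in every $L_p$, and the $L_p$ theory gives $q\in W^{2,p}_{\text{loc}}$ for all $p<\infty$, in particular $q\in C^1_{\text{loc}}$; a final application, now using $u\in H^3$, upgrades the data to $H^1_{\text{loc}}$ and produces $q\in H^3(B_\delta(z))$ for some $0<\delta\le\delta_0$. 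Finally $H^3(B_\delta(z))\hookrightarrow C^1(\overbar{B_\delta(z)})$ for $d\in\{2,3\}$.

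The only genuine difficulty, and the reason (ii) is not a direct corollary of Lemma~\ref{lem:aprior_u}(ii), is that the unknown $q$ enters its own right-hand side through the lower-order term $\varrho_2'(u)q$, so one cannot feed $H^1$-data into the regularity theorem at the outset. The bootstrap must be staged: first extract $H^2_{\text{loc}}$ regularity from merely $L_2$-data, then use the low dimension ($d\le 3$, hence $H^2\hookrightarrow L_\infty$) to make $\varrho_2'(u)q$ admissible as data, and only afterwards close the loop. The last step, which upgrades $q$ from $W^{2,p}_{\text{loc}}$ to $H^3(B_\delta(z))$, is the exact analogue of the regularity argument for $u$ in Lemma~\ref{lem:aprior_u}(ii); the remaining estimates are routine.
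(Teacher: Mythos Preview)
Your proposal is correct and follows the same route as the paper: Lax--Milgram for (i) and interior elliptic regularity on $\Dsf\setminus\overbar\Omega$ for (ii), exactly in the spirit of Lemma~\ref{lem:aprior_u}(ii). The paper's own proof is a two-line remark to this effect, so you have in fact supplied the details---including the bootstrap needed because $q$ enters its own right-hand side---that the paper leaves implicit; your intermediate passage through $W^{2,p}$ and $C^1_{\mathrm{loc}}$ is more than strictly necessary (once $q\in H^2_{\mathrm{loc}}\subset L_{\infty,\mathrm{loc}}$ in $d\le 3$ one can go directly to $H^3_{\mathrm{loc}}$), but harmless.
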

\begin{proof}
  (i) Since $b_\eps$ is coercive and continuous on $H^1_0(\Dsf)$, the theorem of Lax-Milgram shows that \eqref{eq:perturbed_adjoint_topo} admits a unique solution. 

(ii) The proof is the same as the one for item (ii) of Lemma~\ref{lem:aprior_u} and therefore omitted. 
\end{proof}

The previous lemma shows that $Y(\eps,u,u_\eps)=\{q_\eps\}$ is a singleton and therefore Hypothesis~(H2) is satisfied. We proceed with a H\"older-type estimate for $\eps \mapsto q_\eps$.

\begin{lemma}\label{lem:dif_p_peps}
	There is a constant $C>0$, such that for all small $\eps >0$,
	\ben\label{eq:p_pesp}
	\|q_\eps-q\|_{H^1(\Dsf)}\le C(\|u_\eps-u\|_{L_2(\Dsf)} + \eps^{d/2}).
	\een
\end{lemma}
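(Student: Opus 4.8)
The plan is to derive an equation for the difference $q_\eps - q$ by subtracting the two averaged adjoint equations, and then to test with $\psi = q_\eps - q$ and exploit the coercivity of $b_\eps$. Concretely, from \eqref{eq:perturbed_adjoint_topo} written at parameter $\eps$ and at parameter $0$, I would subtract to obtain, for all $\psi\in H^1_0(\Dsf)$,
\ben
b_\eps(\psi, q_\eps - q) = -\int_{\Dsf}(u_\eps - u)\psi\;dx - \big(b_\eps(\psi,q) - b_0(\psi,q)\big).
\een
The first term on the right is clearly bounded by $\|u_\eps-u\|_{L_2(\Dsf)}\|\psi\|_{L_2(\Dsf)}$. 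The remaining term $b_\eps(\psi,q)-b_0(\psi,q)$ is where the $\eps^{d/2}$ contribution must come from: $b_\eps$ and $b_0$ differ only on the small set $\omega_\eps$, since $\beta_\eps - \beta_0 = (\beta_1-\beta_2)\chi_{\omega_\eps}$ and correspondingly the integral of $\varrho'$ differs only there.

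Thus I would write
\ben
b_\eps(\psi,q) - b_0(\psi,q) = \int_{\omega_\eps}(\beta_1-\beta_2)\nabla\psi\cdot\nabla q\;dx + \int_{\omega_\eps}\Big(\int_0^1(\varrho_1'-\varrho_2')(su_\eps+(1-s)u)\,ds\Big)\psi q\;dx,
\een
and estimate each piece. For the first piece, since $z\in\Dsf\setminus\overbar\Omega$ and $\omega_\eps = z+\eps\omega\Subset B_\delta(z)$ for small $\eps$, I use that $\nabla q\in C(\overbar B_\delta(z))^d$ (by part (ii) of the previous lemma, valid for $d\in\{2,3\}$) and $\beta_1-\beta_2\in C(\overbar B_\delta(z))^{d\times d}$, so by Hölder's inequality this is bounded by $C\sqrt{|\omega_\eps|}\,\|\nabla\psi\|_{L_2(\Dsf)^d} = C\eps^{d/2}\|\psi\|_{H^1_0(\Dsf)}$. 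For the second piece, $\varrho_i'$ is continuous hence bounded on the bounded range of $su_\eps+(1-s)u$ (which lies in a fixed interval by the $L_\infty$ bound \eqref{eq:bound_u} uniform in $\eps$), and $q\in C(\overbar B_\delta(z))$, so Hölder again gives a bound $C\sqrt{|\omega_\eps|}\,\|\psi\|_{L_2(\omega_\eps)} \le C\eps^{d/2}\|\psi\|_{H^1_0(\Dsf)}$. Collecting, the right-hand side is bounded by $C(\|u_\eps-u\|_{L_2(\Dsf)}+\eps^{d/2})\|\psi\|_{H^1_0(\Dsf)}$.

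Finally, I would put $\psi = q_\eps - q$ and use coercivity $b_\eps(q_\eps-q,q_\eps-q)\ge\beta_m\|\nabla(q_\eps-q)\|_{L_2(\Dsf)^d}^2$ together with the Poincaré inequality on $H^1_0(\Dsf)$ to absorb the $\|q_\eps-q\|_{H^1(\Dsf)}$ on the right-hand side into the left-hand side, yielding \eqref{eq:p_pesp}. The main obstacle — though a mild one — is making sure the constants in the Hölder estimates on $\omega_\eps$ are genuinely independent of $\eps$: this requires the uniform $L_\infty$ bound on $u_\eps$ from Lemma~\ref{lem:aprior_u} (so that $\varrho_i'$ is evaluated on a fixed compact set independent of $\eps$) and the fact that $\omega_\eps\Subset B_\delta(z)$ for all sufficiently small $\eps$, so that the local $C^1$ regularity of $q$ near $z$ applies uniformly. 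Everything else is a routine application of Lax–Milgram coercivity and Hölder's inequality.
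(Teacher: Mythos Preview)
Your proposal is correct and follows essentially the same route as the paper: derive the identity $b_\eps(\psi,q_\eps-q)=-\int_\Dsf(u_\eps-u)\psi\,dx-(b_\eps-b_0)(\psi,q)$, bound $(b_\eps-b_0)(\psi,q)$ via H\"older's inequality over $\omega_\eps$ using $q\in C^1(\overbar B_\delta(z))$ and the uniform $L_\infty$ bound on $u_\eps$, then test with $\psi=q_\eps-q$ and conclude by coercivity and Poincar\'e. Your handling of the $\varrho'$ term is in fact slightly more explicit than the paper's, as you correctly extract the $\sqrt{|\omega_\eps|}$ factor there as well.
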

\begin{proof}
Using \eqref{eq:perturbed_adjoint_topo} we obtain 
\ben\label{eq:beps_b}
\begin{split}
  b_\eps(\psi,\fp_\eps -\fp) &= b_\eps(\psi,\fp_\eps) - b_\eps(\psi,\fp)\\
			     & \stackrel{\eqref{eq:perturbed_adjoint_topo}}{=} -\int_\Dsf (u_\eps -u)\psi\;dx - (b_\eps-b_0)(\psi,\fp)
\end{split}
\een
for all $\psi \in H^1_0(\Dsf)$. Since furthermore
\ben
(b_\eps-b_0)(\psi,\fp)=-\int_{\omega_\eps} (\beta_1-\beta_2)\nabla \fp \cdot \nabla \psi\; dx   \\ 
- \int_{\omega_\eps} \left(\int_0^1(\varrho_1'-\varrho_2')(su_\eps +(1-s)u)\;ds \right) \fp \psi\;dx,
\een
we obtain using H\"older's inequality and $\fp \in C^1(\overbar B_\delta(z))$, 
\ben\label{eq:beps_b2}
\begin{split}
  |(b_\eps-b_0)(\psi,\fp)|\le &  \|\beta_1-\beta_2\|_{C(\overbar B_\delta(z))^{d\times d}} \|\nabla \fp\|_{C(\overbar B_\delta(z))^d} \sqrt{|\omega_\eps|} \|\nabla\psi\|_{L_{2}(\Dsf)^d} \\
			      & +  \|\varrho_1'-\varrho_2'\|_{L_\infty(B_C(0))} \|\fp\|_{L_2(\Dsf)} \|\psi\|_{L_{2}(\Dsf)}.
\end{split}
\een
where $C>0$ is a constant, such that $\|u_\eps\|_{L_\infty(\Dsf)}\le C$ for all $\eps\in [0,\tau].$ So inserting $\psi=\fp_\eps-\fp$ as test function in \eqref{eq:beps_b} and using \eqref{eq:beps_b2} yields
\ben
\beta_m \|\nabla(\fp_\eps - \fp)\|_{L_2(\Dsf)^d} \le b_\eps(\fp_\eps-\fp,\fp_\eps-\fp) \le C(\sqrt{\omega_\eps} +\|u_\eps-u\|_{L_2(\Dsf)})\|\fp_\eps-\fp\|_{H^1(\Dsf)}. 
\een
Now the result follows from the Poincar\'e inequality and $|\omega_\eps| = \eps^d |\omega|$.
\end{proof}

\begin{remark}
  The proof of estimate \eqref{eq:p_pesp} requires $q\in C^1(\overbar B_\delta(z))$, but not $q_\eps \in C^1(\overbar B_\delta(z))$, which is false in general, since  $\nabla q_\eps$ has a jump across $\partial \omega_\eps$. 
\end{remark}
	Let us finish this section with the verification of Hypothesis~(H3).
	\begin{lemma}\label{lem:hypo_H2}
We have 
	  \ben\label{eq:hypo_H2}
	  \begin{split}
	    \lim_{\eps \searrow 0}\frac{G(\eps, u,q) - G(0,u,q)}{|\omega_\eps|} = &  (\beta_1-\beta_2)(z) \nabla u(z)\cdot \nabla q(z) \\
									       & + (\varrho_1(u(z)) - \varrho_2(u(z))) q(z) \\
									       & - ((f_1-f_2)\fp)(z).
	\end{split}
	\een
	\end{lemma}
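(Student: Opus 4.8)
The plan is to exploit the explicit, spatially localised structure of the Lagrangian \eqref{eq:lagrange_scale_eps}. Since $z\in\Dsf\setminus\overbar\Omega$ and this set is open while $\omega$ is bounded, I would first pick $\delta>0$ with $\overbar B_\delta(z)\subset\Dsf\setminus\overbar\Omega$ and observe that $\omega_\eps(z)=z+\eps\omega\subset B_\delta(z)$ for all sufficiently small $\eps>0$; in particular $\omega_\eps(z)\cap\Omega=\emptyset$, so $\Omega_\eps=\Omega\cup\omega_\eps(z)$ is a disjoint union. Consequently $\beta_\eps-\beta_0=(\beta_1-\beta_2)\chi_{\omega_\eps}$, $\varrho_\eps(u)-\varrho_0(u)=(\varrho_1(u)-\varrho_2(u))\chi_{\omega_\eps}$ and $f_\eps-f_0=(f_1-f_2)\chi_{\omega_\eps}$. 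Inserting these identities into \eqref{eq:lagrange_scale_eps} and observing that the term $\int_\Dsf u^2\,dx$ does not depend on $\eps$ and therefore cancels, I obtain
\ben
G(\eps,u,q)-G(0,u,q)=\int_{\omega_\eps}\Big[(\beta_1-\beta_2)\nabla u\cdot\nabla q+(\varrho_1(u)-\varrho_2(u))q-(f_1-f_2)q\Big]\,dx .
\een

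The second step is to check that the integrand $h$ appearing in this formula is continuous at $z$. By Lemma~\ref{lem:aprior_u}(ii) applied to $u$, and by the same elliptic regularity argument applied to $q$, after shrinking $\delta$ if necessary we have $u,q\in H^3(B_\delta(z))\subset C^1(\overbar B_\delta(z))$ for $d\in\{2,3\}$. Together with $\beta_i\in C^1(\VR^d)^{d\times d}$, $\varrho_i\in C^1(\VR)$ from Assumption~\ref{ass:varrho_monotone}(a)--(b) and, in the case $f_1\ne f_2$, $f_i\in C(\Dsf)$ from Assumption~\ref{ass:varrho_monotone}(c), this shows that
\ben
h(x):=(\beta_1-\beta_2)(x)\,\nabla u(x)\cdot\nabla q(x)+(\varrho_1(u(x))-\varrho_2(u(x)))\,q(x)-(f_1-f_2)(x)\,q(x)
\een
belongs to $C(\overbar B_\delta(z))$; when $f_1=f_2$ the last summand is identically zero and the conclusion is the same. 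In particular $h$ is bounded on $\overbar B_\delta(z)$ and continuous at $z$.

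Finally I would pass to the limit by rescaling. The change of variables $x=z+\eps y$ maps $\omega$ onto $\omega_\eps(z)$ with constant Jacobian $\eps^d$, and $|\omega_\eps|=\eps^d|\omega|$, so
\ben
\frac{G(\eps,u,q)-G(0,u,q)}{|\omega_\eps|}=\fint_{\omega}h(z+\eps y)\,dy .
\een
For $\eps$ small enough one has $z+\eps y\in\overbar B_\delta(z)$ for every $y\in\omega$, hence $|h(z+\eps y)|\le\sup_{\overbar B_\delta(z)}|h|$; since moreover $h(z+\eps y)\to h(z)$ as $\eps\searrow0$ by continuity of $h$ at $z$, the dominated convergence theorem gives $\fint_\omega h(z+\eps y)\,dy\to h(z)$. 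Writing out $h(z)$ produces precisely the right-hand side of \eqref{eq:hypo_H2}.

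This argument has no genuine analytical difficulty. The only points requiring care are the localisation step — making sure that for small $\eps$ the inclusion $\omega_\eps(z)$ lies inside the neighbourhood of $z$ on which $u$ and $q$ are $C^1$, which is what legitimises evaluating the integrand pointwise at $z$ — and keeping track of the two regularity regimes for the $f_i$ in Assumption~\ref{ass:varrho_monotone}(c). In contrast to the later verification of Hypothesis~(H4), which involves the rescaled differential quotient of the averaged adjoint state, Hypothesis~(H3) treated here is elementary.
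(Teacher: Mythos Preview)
Your proof is correct and follows essentially the same route as the paper: compute $G(\eps,u,q)-G(0,u,q)$ as an integral over $\omega_\eps$, rescale via $x=z+\eps y$ (the paper's $T_\eps$), and pass to the limit using the $C^1$ regularity of $u$ and $q$ near $z$ together with the continuity of $\beta_i$, $\varrho_i$, $f_i$. Your treatment is in fact slightly more careful than the paper's, since you explicitly invoke dominated convergence and handle the case $f_1=f_2$ separately.
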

	\begin{proof}	
	   The change of variables $T_\eps$ shows that for $\eps >0$,
	\ben\label{eq:verify_H2}
	\begin{split}
	  \frac{G(\eps, u,\fp) - G(0,u,\fp)}{|\omega_\eps|}  = & \frac{1}{|\omega|}  \int_{\omega} ((\beta_1-\beta_2)\nabla u\cdot \nabla \fp)(T_\eps(x)) \; dx\\
							& + \frac{1}{|\omega|}  \int_{\omega} \left((\varrho_1(u)-\varrho_2(u)) u \fp\right)(T_\eps(x)) \; dx\\
							& - \frac{1}{|\omega|}  \int_{\omega} ((f_1-f_2)\fp)(T_\eps(x))  \; dx.
      \end{split}
	\een
	Recalling that $f_1,f_2\in C(\overbar B_\delta(z))$ and $u,\fp\in C^1(\overbar B_\delta(z))$ for a small $\delta>0$ and since $T_\eps(\omega)\subset \overbar B_\delta(z)$ for all small $\eps>0$, we can pass to the limit in \eqref{eq:verify_H2} to obtain \eqref{eq:hypo_H2}.
      \end{proof}

\subsection{Variation of the averaged adjoint equation and its weak limit}
The goal of this section is to verify Hypothesis~(H4), that is, to show that
\ben
R(u,\fp) := \lim_{\eps\searrow 0} \frac{G(\eps,u,\fp_\eps)-G(\eps,u,\fp)}{|\omega_\eps|}
\een
exists and, if possible, to determine its explicit form. In contrast to previous works we consider the variation of the 
averaged adjoint state variable which we will show converges weakly to a function $\fP$ defined on the whole space $\VR^d$. For this purpose we need the following definition.  
	\begin{definition}
	  The \emph{inflation} of $\Dsf\setminus \overbar\Omega$ around $z\in \Dsf\setminus \overbar\Omega$ is defined by $\Dsf_\eps := T_\eps^{-1}(\Dsf\setminus \overbar\Omega)$, where the transformation $T_\eps$ is defined by $T_\eps(x) := \eps x + z$.
      \end{definition}
      Notice that $\cup_{\eps >0} \Dsf_\eps = \VR^d$ and that $\eps \mapsto  \Dsf_\eps$ is monotonically decreasing, that is,  $\eps_1<\eps_2 \Rightarrow  D_{\eps_2} \subset D_{\eps_1} $.  
      \begin{lemma}\label{lem:trafo_eps}
	For $\eps >0$ we have $\varphi \in H^1(\Dsf\setminus\overbar{\Omega})$ if and only if $\varphi\circ T_\eps \in H^1(\Dsf_\eps)$. 
      \end{lemma}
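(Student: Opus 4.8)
The plan is to observe that $T_\eps$ is an affine diffeomorphism of $\Dsf_\eps$ onto $\Dsf\setminus\overbar\Omega$ and to transport the $H^1$-structure through it by a direct change-of-variables computation. Since $T_\eps(x)=\eps x+z$ is affine with constant Jacobian $DT_\eps=\eps\,\mathrm{Id}$, hence $\det DT_\eps=\eps^d$, and since its inverse $T_\eps^{-1}(y)=\eps^{-1}(y-z)$ is again an affine map of the same type, it suffices to prove one implication; the converse follows by applying the argument to $T_\eps^{-1}$ in place of $T_\eps$, with the roles of $\Dsf_\eps$ and $\Dsf\setminus\overbar\Omega$ interchanged.

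So assume $\varphi\in H^1(\Dsf\setminus\overbar\Omega)$. First, the change-of-variables formula immediately gives
\[
\int_{\Dsf_\eps}|\varphi(T_\eps(x))|^2\,dx=\eps^{-d}\int_{\Dsf\setminus\overbar\Omega}|\varphi(y)|^2\,dy<\infty,
\]
so $\varphi\circ T_\eps\in L_2(\Dsf_\eps)$. Next I would show that $\varphi\circ T_\eps$ has weak gradient $x\mapsto\eps(\nabla\varphi)(T_\eps(x))$. Fix $\psi\in C_c^\infty(\Dsf_\eps)^d$ and set $\tilde\psi:=\psi\circ T_\eps^{-1}\in C_c^\infty(\Dsf\setminus\overbar\Omega)^d$; a direct computation from the chain rule gives $(\Div\psi)(T_\eps^{-1}(y))=\eps\,(\Div\tilde\psi)(y)$. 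Substituting $y=T_\eps(x)$ twice and using the definition of the weak gradient of $\varphi$ (which is legitimate since $\tilde\psi$ is compactly supported in $\Dsf\setminus\overbar\Omega$),
\[
\int_{\Dsf_\eps}(\varphi\circ T_\eps)\,\Div\psi\,dx
=\eps^{1-d}\!\!\int_{\Dsf\setminus\overbar\Omega}\!\!\varphi\,\Div\tilde\psi\,dy
=-\eps^{1-d}\!\!\int_{\Dsf\setminus\overbar\Omega}\!\!\nabla\varphi\cdot\tilde\psi\,dy
=-\int_{\Dsf_\eps}\eps(\nabla\varphi)(T_\eps(x))\cdot\psi(x)\,dx .
\]
This identifies the weak gradient, and one more change of variables gives $\int_{\Dsf_\eps}|\eps(\nabla\varphi)(T_\eps(x))|^2\,dx=\eps^{2-d}\int_{\Dsf\setminus\overbar\Omega}|\nabla\varphi|^2\,dy<\infty$, whence $\varphi\circ T_\eps\in H^1(\Dsf_\eps)$.

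The only point that requires a little care is the justification of the chain rule for the merely $H^1$ function $\varphi$: since $\Dsf\setminus\overbar\Omega$ need not have a regular boundary, one cannot appeal to density of functions smooth up to the boundary, which is why I phrase the whole argument in terms of compactly supported test functions. (Alternatively one may simply invoke the standard invariance of $W^{1,2}$ under bi-Lipschitz changes of variables, $T_\eps$ being affine.) Everything else is bookkeeping of the constant Jacobian factor $\eps^{d}$ and the chain-rule factor $\eps$; the displayed identities show that the $L_2$-norms of $\varphi\circ T_\eps$ and $\nabla(\varphi\circ T_\eps)$ on $\Dsf_\eps$ are finite precisely when those of $\varphi$ and $\nabla\varphi$ on $\Dsf\setminus\overbar\Omega$ are, which gives the equivalence.
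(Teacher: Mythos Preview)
Your argument is correct. The paper's own proof is the one-line alternative you mention parenthetically: it simply notes that $T_\eps$ is bi-Lipschitz for $\eps>0$ and invokes the standard invariance of Sobolev spaces under bi-Lipschitz changes of variables (citing Ziemer, \emph{Weakly Differentiable Functions}, Thm.~2.2.2). Your explicit change-of-variables computation unpacks that result in the special affine case; it is more self-contained and yields the precise scaling identities $\|\varphi\circ T_\eps\|_{L_2(\Dsf_\eps)}^2=\eps^{-d}\|\varphi\|_{L_2(\Dsf\setminus\overbar\Omega)}^2$ and $\|\nabla(\varphi\circ T_\eps)\|_{L_2(\Dsf_\eps)}^2=\eps^{2-d}\|\nabla\varphi\|_{L_2(\Dsf\setminus\overbar\Omega)}^2$, which are in fact used later in the paper, but for the bare statement of the lemma the citation suffices.
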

      \begin{proof}
	Since $T_\eps$ is bi-Lipschitz continuous for $\eps >0$, this follows from \cite[Thm. 2.2.2, p.52]{b_ZI_1989a}.
      \end{proof}

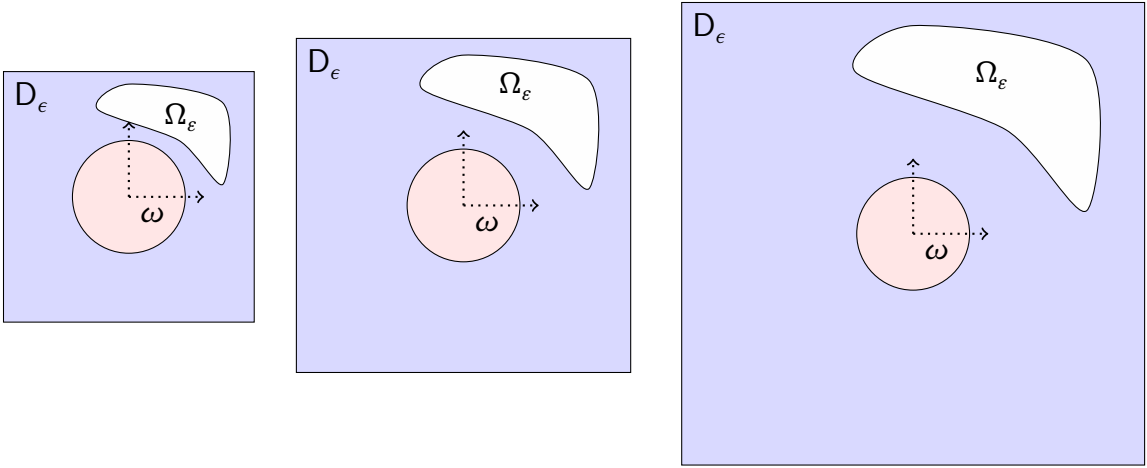
\begin{figure}[t]
\begin{minipage}[t][5cm][b]{.24\textwidth}
\raisebox{0.2\height}{
\begin{tikzpicture}

\newcommand{\rr}{1.2}

\coordinate (O) at (0,0);

\coordinate (A) at (-2,2);
\coordinate (B) at (-2,-2);
\coordinate (C) at (2,-2);
\coordinate (D) at (2,2);

\coordinate (C1) at (-0.5,1.4);
\coordinate (C2) at (0.0,1.8);
\coordinate (C3) at (1.5,1.5);
\coordinate (C4) at (1.5,0.2);
\coordinate (C5) at (0.8,0.9);

\draw[fill=blue!15] ($(O)+{(1/\rr)}*(A)$) -- ($(O)+{(1/\rr)}*(B)$) -- ($(O)+{(1/\rr)}*(C)$) -- ($(O)+{(1/\rr)}*(D)$)   -- cycle node[anchor=north west]{$\Dsf_\epsilon$};

\draw[fill=red!10] ($(O)$) circle ( 0.75 cm) node[anchor=north west]{$\omega$};

\draw [fill=gray!1] plot [smooth cycle] coordinates { ($(O)+{(1/\rr)}*(C1)$) ($(O)+{(1/\rr)}*(C2)$) ($(O)+{(1/\rr)}*(C3)$) ($(O)+{(1/\rr)}*(C4)$) ($(O)+{(1/\rr)}*(C5)$)}; 

\node at (20pt,30pt) {$\Omega_\eps$};

\draw[thick,->,dotted] (O) -- ($(O) + (1.0,0)$); 
\draw[thick,->, dotted] (O) -- ($(O) + (0,1.0)$); 

\end{tikzpicture}
}
\end{minipage}%
\begin{minipage}[t][5cm][b]{.3\textwidth}
\begin{tikzpicture}

\newcommand{\rr}{0.9}

\coordinate (O) at (0,0);

\coordinate (A) at (-2,2);
\coordinate (B) at (-2,-2);
\coordinate (C) at (2,-2);
\coordinate (D) at (2,2);

\coordinate (C1) at (-0.5,1.4);
\coordinate (C2) at (0.0,1.8);
\coordinate (C3) at (1.5,1.5);
\coordinate (C4) at (1.5,0.2);
\coordinate (C5) at (0.8,0.9);

\draw[fill=blue!15] ($(O)+{(1/\rr)}*(A)$) -- ($(O)+{(1/\rr)}*(B)$) -- ($(O)+{(1/\rr)}*(C)$) -- ($(O)+{(1/\rr)}*(D)$)   -- cycle node[anchor=north west]{$\Dsf_\epsilon$};

\draw[fill=red!10] ($(O)$) circle ( 0.75 cm) node[anchor=north west]{$\omega$};

\draw [fill=gray!1] plot [smooth cycle] coordinates { ($(O)+{(1/\rr)}*(C1)$) ($(O)+{(1/\rr)}*(C2)$) ($(O)+{(1/\rr)}*(C3)$) ($(O)+{(1/\rr)}*(C4)$) ($(O)+{(1/\rr)}*(C5)$)}; 

\node at (20pt,45pt) {$\Omega_\eps$};

\draw[thick,->,dotted] (O) -- ($(O) + (1.0,0)$); 
\draw[thick,->, dotted] (O) -- ($(O) + (0,1.0)$); 
\end{tikzpicture}
\end{minipage}%
\begin{minipage}[t][5cm][b]{.4\textwidth}
\raisebox{-0.2\height}{
\begin{tikzpicture}

\newcommand{\rr}{0.65}

\coordinate (O) at (0,0);

\coordinate (A) at (-2,2);
\coordinate (B) at (-2,-2);
\coordinate (C) at (2,-2);
\coordinate (D) at (2,2);

\coordinate (C1) at (-0.5,1.4);
\coordinate (C2) at (0.0,1.8);
\coordinate (C3) at (1.5,1.5);
\coordinate (C4) at (1.5,0.2);
\coordinate (C5) at (0.8,0.9);

\coordinate (Z) at (-6,10);

\draw[fill=blue!15] ($(O)+{(1/\rr)}*(A)$) -- ($(O)+{(1/\rr)}*(B)$) -- ($(O)+{(1/\rr)}*(C)$) -- ($(O)+{(1/\rr)}*(D)$)   -- cycle node[anchor=north west]{$\Dsf_\epsilon$};

\draw[fill=red!10] ($(O)$) circle ( 0.75 cm) node[anchor=north west]{$\omega$};

\draw [fill=gray!1] plot [smooth cycle] coordinates { ($(O)+{(1/\rr)}*(C1)$) ($(O)+{(1/\rr)}*(C2)$) ($(O)+{(1/\rr)}*(C3)$) ($(O)+{(1/\rr)}*(C4)$) ($(O)+{(1/\rr)}*(C5)$)}; 

\node at (30pt,60pt) {$\Omega_\eps$};

\draw[thick,->,dotted] (O) -- ($(O) + (1.0,0)$); 
\draw[thick,->, dotted] (O) -- ($(O) + (0,1.0)$); 

\end{tikzpicture}
}
\end{minipage}
\caption{Depicted are several inflated domains $\Dsf_\eps=T_\eps^{-1}(\Dsf\setminus \overbar \Omega)$ and $\Omega_\eps := T_\eps^{-1}(\Omega)$ with $\eps$ decreasing from left to right. The original inclusion $\omega_\eps$ appears as the fixed inclusion $\omega$ centered at the origin in the inflated domain. It can be seen that the domain $\Omega_\eps$ is gradually pushed to infinity the smaller $\eps$ gets. } 
\end{figure}

The next step is to consider the variation of the averaged adjoint state. For this purpose let us extend $\fp_\eps$ to zero outside of $\Dsf$, that is,  
\ben\label{eq:definition_p_tilde}
\tilde \fp_\eps(x) := \left\{
  \begin{array}{cl}
    \fp_\eps(x) & \text{ for a.e. } x\in \Dsf,\\
      0 & \text{ for a.e. } x\in \VR^d\setminus\Dsf.
\end{array}\right.
\een
In the same way we extend $u_\eps$ to a function $\tilde u_\eps:\VR^d\to \VR$. 
Notice that $\tilde u_\eps, \tilde \fp_\eps \in H^1(\VR^d)$ for all $\eps >0$. 
We will use the notation $\fp^\eps := \tilde\fp_\eps \circ T_\eps$. 

\begin{remark}[Neumann boundary conditions]
  If we had imposed Neumann conditions in (S), then it would be sufficient to replace \eqref{eq:definition_p_tilde} by $\tilde q_\eps := Eq_\eps$, where $E:H^1(\Dsf) \to H^1(\VR^d)$ is a continuous extension operator; see \cite[Thm. 1, pp. 254]{b_EV_1998a}. The subsequent analysis were still the same.
\end{remark}

\begin{definition}\label{def:variation_q}
	The \emph{variation of the averaged adjoint state} $\fp_\eps$ is defined pointwise a.e. in $\VR^d$ by
	\ben
	Q^\eps(x) := \frac{\tilde \fp_\eps(T_\eps(x))-\tilde \fp(T_\eps(x))}{\eps}.
	\een
	Notice that for every $\eps > 0$ we have  $\fP^\eps \in H^1(\VR^d)$.
	\end{definition}

	Our next task is to show that $(\fP^\eps)$ converges in $\dot{BL}(\VR^d)$ to a equivalence class of functions $[\fP]$ and determine 
	an equation for it. The first step is to prove the following apriori estimates.
	\begin{lemma}\label{lem:bounds_scale_one}
		There is a constant $C>0$, such that for all small $\eps >0$, 
		\ben\label{eq:bound_p_eps}
		\int_{\VR^d} (\eps \fP^\eps)^2+ |\nabla \fP^\eps|^2 \; dx \le C.
		\een
	\end{lemma}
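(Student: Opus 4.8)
The estimate \eqref{eq:bound_p_eps} should come from testing the averaged adjoint equation \eqref{eq:perturbed_adjoint_topo} with a suitable test function and then changing variables via $T_\eps$. Concretely, I would start from the equation satisfied by the difference $\fp_\eps-\fp$, namely \eqref{eq:beps_b}, test it with $\psi=\fp_\eps-\fp$, and rewrite everything in the inflated coordinates $x\mapsto T_\eps(x)$. Since $\fP^\eps(x)=(\tilde\fp_\eps(T_\eps(x))-\tilde\fp(T_\eps(x)))/\eps$, the change of variables $y=T_\eps(x)$ has Jacobian $\eps^d$, and the gradient transforms as $\nabla_x(\cdot\circ T_\eps)=\eps\,(\nabla\cdot)\circ T_\eps$. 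Thus the Dirichlet term $\int_{\Dsf}\beta_\eps\nabla(\fp_\eps-\fp)\cdot\nabla(\fp_\eps-\fp)\,dy$ becomes $\eps^{d-2}\int_{\Dsf_\eps}(\beta_\eps\circ T_\eps)\nabla\fP^\eps\cdot\nabla\fP^\eps\,dx$, i.e. after multiplying \eqref{eq:beps_b} by $\eps^{-d}$ and using coercivity we get $\beta_m\eps^{-2}\|\nabla\fP^\eps\|^2_{L_2(\Dsf_\eps)^d}$ on the left, up to the nonnegative zeroth-order term. So in fact the natural scaling forces us to keep a factor: multiplying through by $\eps^{-d}$, the left side controls $\eps^{-2}\int_{\Dsf_\eps}|\nabla\fP^\eps|^2+(\text{nonneg. }L^2\text{ term})$, and we must show the right side is $O(\eps^{-2})$.

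**Handling the right-hand side.** The right side of \eqref{eq:beps_b} is $-\int_{\Dsf}(u_\eps-u)\psi\,dy-(b_\eps-b_0)(\psi,\fp)$ with $\psi=\fp_\eps-\fp$. The term $(b_\eps-b_0)(\psi,\fp)$ is an integral over $\omega_\eps$ (as displayed in the proof of Lemma~\ref{lem:dif_p_peps}); changing variables it becomes an integral over $\omega$, contributing $\eps^d\cdot\eps\cdot\|\nabla\fP^\eps\|_{L_2(\omega)^d}\cdot(\text{bounded})$ from the $\beta$-part (one $\eps$ from $\nabla\psi=\eps\nabla\fP^\eps\circ T_\eps$ and $\eps^d$ from the Jacobian, and $\fp\in C^1$ near $z$ gives the bound) plus a similar zeroth-order piece of order $\eps^d\cdot\eps\cdot\|\fP^\eps\|_{L_2(\omega)}$. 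Using Lemma~\ref{lem:u_ueps}, which gives $\|u_\eps-u\|_{L_2(\Dsf)}\le C\eps^{d/2}$, the first term $\int_\Dsf(u_\eps-u)\psi\,dy$ is bounded by $C\eps^{d/2}\|\psi\|_{L_2(\Dsf)}$; rewriting $\|\psi\|_{L_2(\Dsf)}=\eps\cdot\eps^{d/2}\|\fP^\eps\|_{L_2(\Dsf_\eps)}$ (Jacobian $\eps^{d/2}$ and the definition of $\fP^\eps$), this is $C\eps^{1+d}\|\fP^\eps\|_{L_2(\Dsf_\eps)}$. Dividing the whole inequality by $\eps^d$ we arrive at
\ben\label{eq:plan_master}
\beta_m\eps^{-2}\|\nabla\fP^\eps\|^2_{L_2(\Dsf_\eps)^d}\le C\Big(\eps\,\|\fP^\eps\|_{L_2(\Dsf_\eps)}+\eps\,\|\nabla\fP^\eps\|_{L_2(\omega)^d}+\eps\,\|\fP^\eps\|_{L_2(\omega)}\Big).
\een
To close this I would use a Poincaré–Sobolev-type inequality on $\Dsf_\eps$ with $\fP^\eps$ vanishing on $\partial\Dsf_\eps$ (since $\tilde\fp_\eps,\tilde\fp$ were extended by zero, $\fP^\eps\in H^1_0(\Dsf_\eps)$ viewed appropriately): $\|\fP^\eps\|_{L_2(\Dsf_\eps)}\le C(\Dsf_\eps)\|\nabla\fP^\eps\|_{L_2(\Dsf_\eps)^d}$. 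The subtle point is the dependence of the Poincaré constant on $\eps$: since $\Dsf_\eps=T_\eps^{-1}(\Dsf\setminus\overbar\Omega)$ contains balls of radius $\sim 1/\eps$, the Poincaré constant for Dirichlet data on $\partial\Dsf_\eps$ grows like $1/\eps$. So $\|\fP^\eps\|_{L_2(\Dsf_\eps)}\le (C/\eps)\|\nabla\fP^\eps\|_{L_2(\Dsf_\eps)^d}$, which when substituted into \eqref{eq:plan_master} yields $\eps^{-2}\|\nabla\fP^\eps\|^2\le C\|\nabla\fP^\eps\|$ plus lower-order terms absorbable by Young's inequality, hence $\|\nabla\fP^\eps\|_{L_2(\Dsf_\eps)^d}\le C\eps^2$ — wait, that gives $\|\nabla\fP^\eps\|\le C\eps^2$, and then $\|\eps\fP^\eps\|_{L_2}\le C\eps^2$ too. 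That is even stronger than \eqref{eq:bound_p_eps}; I suspect the correct bookkeeping gives $\|\nabla\fP^\eps\|_{L_2(\Dsf_\eps)}\le C\eps$ and $\|\eps\fP^\eps\|_{L_2}\le C$, which already implies the claimed uniform bound $\int_{\VR^d}(\eps\fP^\eps)^2+|\nabla\fP^\eps|^2\,dx\le C$ after extending $\fP^\eps$ by its boundary values (zero) to all of $\VR^d$.

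**The main obstacle.** The delicate point is the precise tracking of powers of $\eps$ through the change of variables combined with the growing Poincaré constant on $\Dsf_\eps$; one must be careful not to lose the borderline scaling. I would handle this by deriving a clean scaled identity first (rewrite \eqref{eq:perturbed_adjoint_topo} entirely in inflated coordinates as an equation for $\fP^\eps$ on $\Dsf_\eps$), test with $\fP^\eps$ itself, use coercivity of $\beta_\eps\circ T_\eps$ (same constants $\beta_m,\beta_M$, scale-invariant) and nonnegativity of the zeroth-order coefficient, then estimate the two source terms — one supported on $\omega$ (genuinely $O(\eps^{1+d})$ before rescaling, using $\fp\in C^1$ near $z$ and $\tilde\fp$'s definition) and one coming from $u_\eps-u$ (controlled by Lemma~\ref{lem:u_ueps}). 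The only genuinely global ingredient is the $\eps$-dependent Poincaré inequality on $\Dsf_\eps$; alternatively, one can avoid it by noting that after extending $\fP^\eps$ by zero outside $\Dsf_\eps$ one lands in $\dot{BL}(\VR^d)$ and apply the Gagliardo–Nirenberg–Sobolev inequality (for $d\ge 3$, giving $\|\fP^\eps\|_{L_{2^*}(\VR^d)}\le C\|\nabla\fP^\eps\|_{L_2(\VR^d)^d}$ with a \emph{scale-invariant} constant) together with Hölder on the bounded set $\omega$ to estimate the $L^2(\omega)$ norms — this is cleaner and dimension-uniform once one treats $d=2$ separately. Either way the conclusion is the uniform bound \eqref{eq:bound_p_eps}.
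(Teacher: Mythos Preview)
You are making this far harder than it is, and in the process you have introduced bookkeeping errors. The paper's proof is three lines: Lemmas~\ref{lem:u_ueps} and~\ref{lem:dif_p_peps} together already give $\|q_\eps-q\|_{H^1(\Dsf)}\le C\eps^{d/2}$, hence after zero extension $\int_{\VR^d}(\tilde q_\eps-\tilde q)^2+|\nabla(\tilde q_\eps-\tilde q)|^2\,dy\le C\eps^d$. Now simply change variables $y=T_\eps(x)$: since $(\tilde q_\eps-\tilde q)\circ T_\eps=\eps Q^\eps$ and $(\nabla(\tilde q_\eps-\tilde q))\circ T_\eps=\nabla_x Q^\eps$ (the $\eps$ from the chain rule cancels the $1/\eps$ in the definition of $Q^\eps$), the Jacobian $\eps^d$ cancels the right-hand side and you get exactly $\int_{\VR^d}(\eps Q^\eps)^2+|\nabla Q^\eps|^2\,dx\le C$. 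No Poincar\'e, no Gagliardo--Nirenberg, no absorption argument.

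Your plan is essentially to re-prove Lemma~\ref{lem:dif_p_peps} a second time, but now in the inflated coordinates, and then close with an $\eps$-dependent Poincar\'e constant. That is redundant: the energy estimate you start from \emph{is} Lemma~\ref{lem:dif_p_peps}. Moreover your scaling is off. The Dirichlet term $\int_{\Dsf}\beta_\eps|\nabla(q_\eps-q)|^2\,dy$ equals $\eps^d\int_{\Dsf_\eps}(\beta_\eps\circ T_\eps)|\nabla Q^\eps|^2\,dx$, not $\eps^{d-2}$ times it, because $(\nabla(q_\eps-q))\circ T_\eps=\nabla_x Q^\eps$ with no leftover factor of $\eps$. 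With the correct power, dividing by $\eps^d$ leaves $\beta_m\|\nabla Q^\eps\|^2$ on the left (not $\eps^{-2}$ times it), and the right-hand side is bounded by $C\|\nabla Q^\eps\|_{L_2(\omega)}+C\eps\|Q^\eps\|_{L_2}$; combining with the Poincar\'e constant $\sim 1/\eps$ on $\Dsf_\eps$ gives $\|\nabla Q^\eps\|\le C$, which is the claim --- not $\le C\eps^2$ nor $\le C\eps$ as you wrote. So your route can be made to work once the powers are fixed, but it duplicates work already done and obscures what is really just a rescaling of an estimate you already have.
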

	\begin{proof}
	  Obviously, the Lemmas~\ref{lem:dif_p_peps}/\ref{lem:u_ueps} imply that there is a constant $C>0$ such that $\|\fp_\eps -\fp\|_{H^1(\Dsf)}\le C\eps^{d/2}$ for all small $\eps>0$. 	This and definition \eqref{eq:definition_p_tilde} imply 
	\ben\label{eq:bound_p_eps_proof}
	\int_{\VR^d} (\tilde \fp_\eps-\tilde \fp)^2 + |\nabla (\tilde \fp_\eps-\tilde \fp)|^2\; dx \le C\eps^d.
		\een
		Hence invoking the change of variables $T_\eps$ in \eqref{eq:bound_p_eps_proof} yields 
		the bound \eqref{eq:bound_p_eps}.
	\end{proof}
      Notice that for $\eps >0$ the function $\fP^\eps$ belongs to $H^1(\VR^d)$, but it is not bounded with respect to $\eps$. However, the bound \eqref{eq:bound_p_eps} is sufficient to show the following key theorem.
      
      \begin{theorem}\label{thm:convegence_peps}
	For $d\in \{2,3\}$, we have
	\ben
	\begin{split}
	  \nabla \fP^\eps \rightharpoonup \nabla \fP \quad & \text{ in } L_2(\VR^d)^d,\\
	  \eps \fP^\eps \rightharpoonup 0 \quad & \text{ in } H^1(\VR^d),
      \end{split}
	\een
	where  $[\fP]\in \dot{BL}(\VR^d)$ is the unique solution to 
\ben\label{eq:exterior_beta}
\int_{\VR^d} A \nabla \psi\cdot \nabla \fP  \; dx = \int_\omega \zeta \cdot \nabla \psi\; dx \quad \text{ for all } \psi \in BL(\VR^d),
\een
where  $A := \beta_1(z)\chi_\omega + \beta_2(z)\chi_{\VR^d\setminus \omega}$ and $\zeta := -(\beta_1(z)-\beta_2(z))\nabla \fp(z)$; see \eqref{eq:laplace_unbounded_intro}.
			\end{theorem}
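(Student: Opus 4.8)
The strategy is to derive, by rescaling, a weak equation for $\fP^\eps$ on the inflated domain $\Dsf_\eps$ and then to pass to the limit using only the weak compactness provided by Lemma~\ref{lem:bounds_scale_one}, with no remainder estimates. First I would start from the difference identity \eqref{eq:beps_b} of the averaged adjoint equations for $\fp_\eps$ and $\fp$, together with the formula for $(b_\eps-b_0)(\psi,\fp)$ following it, insert the test function $\psi=\varphi\circ T_\eps^{-1}$ with $\varphi\in C_c^\infty(\VR^d)$ (which lies in $H^1_0(\Dsf)$ as soon as $\eps$ is small enough that $T_\eps(\operatorname{supp}\varphi)\subset\Dsf\setminus\overbar\Omega$, possible since $\cup_\eps\Dsf_\eps=\VR^d$ and the family $\Dsf_\eps$ is nested), perform the change of variables $y=T_\eps(x)$, and use $\nabla_x\big[(\fp_\eps-\fp)\circ T_\eps\big]=\eps\,(\nabla(\fp_\eps-\fp))\circ T_\eps$. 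After dividing by $\eps^{d-1}$ this produces an identity of the schematic form
\[
\int_{\VR^d}\beta_\eps(T_\eps(x))\nabla\fP^\eps\cdot\nabla\varphi\,dx+\eps^2 A_\eps(\varphi)=\int_{\omega}(\beta_1-\beta_2)(T_\eps(x))\,\nabla\fp(T_\eps(x))\cdot\nabla\varphi\,dx+\eps\,B_\eps(\varphi),
\]
where $A_\eps(\varphi)$ collects the rescaled zeroth-order term of $b_\eps$ and $B_\eps(\varphi)$ the contributions of $u_\eps-u$ and of the zeroth-order part of $b_\eps-b_0$; both functionals stay bounded as $\eps\searrow0$ by the uniform $L_\infty$-bounds on $u_\eps$, the continuity of $\varrho_i'$ on the relevant compact range, the estimate $\|u_\eps-u\|_{L_2(\Dsf)}\le C\eps^{d/2}$ of Lemma~\ref{lem:u_ueps}, and the $C^1$-regularity of $\fp$ on $\overbar B_\delta(z)$.

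Then I would invoke Lemma~\ref{lem:bounds_scale_one}: $(\nabla\fP^\eps)$ is bounded in $L_2(\VR^d)^d$ and $(\eps\fP^\eps)$ in $L_2(\VR^d)$. Extracting a subsequence, $\nabla\fP^\eps\rightharpoonup G$ weakly in $L_2(\VR^d)^d$; since the space of gradients $\{\nabla v:[v]\in\dot{BL}(\VR^d)\}$ is the isometric image of the Hilbert space $\dot{BL}(\VR^d)$ inside $L_2(\VR^d)^d$, it is closed, hence weakly closed, so $G=\nabla\fP$ for a unique $[\fP]\in\dot{BL}(\VR^d)$. Next I pass to the limit in the rescaled identity term by term. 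The coefficient $\beta_\eps(T_\eps(\cdot))$ converges pointwise a.e.\ and boundedly to $A=\beta_1(z)\chi_\omega+\beta_2(z)\chi_{\VR^d\setminus\omega}$, because for small $\eps$ the set $T_\eps(\operatorname{supp}\varphi)$ is a shrinking neighbourhood of $z$ whose only inclusion is $\omega_\eps=T_\eps(\omega)$ and $\beta_i$ is continuous; as $\varphi$ has fixed compact support this gives $\beta_\eps(T_\eps(\cdot))\nabla\varphi\to A\nabla\varphi$ strongly in $L_2(\VR^d)^d$ by dominated convergence, and the product of a strongly and a weakly convergent sequence yields $\int\beta_\eps(T_\eps(\cdot))\nabla\fP^\eps\cdot\nabla\varphi\,dx\to\int_{\VR^d}A\nabla\fP\cdot\nabla\varphi\,dx$. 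The term $\eps^2 A_\eps(\varphi)$ tends to $0$ because $|A_\eps(\varphi)|\le C\|\fP^\eps\|_{L_2(\VR^d)}\le C\eps^{-1}$, and $\eps B_\eps(\varphi)\to0$ because $B_\eps(\varphi)$ remains bounded; finally, since $\fp\in C^1(\overbar B_\delta(z))$ and $\omega_\eps\subset B_\delta(z)$ for small $\eps$, one has $(\beta_1-\beta_2)(T_\eps(\cdot))\to(\beta_1-\beta_2)(z)$ and $\nabla\fp(T_\eps(\cdot))\to\nabla\fp(z)$ uniformly on $\omega$, so the right-hand side tends to $\int_\omega\zeta\cdot\nabla\varphi\,dx$ with $\zeta=-(\beta_1(z)-\beta_2(z))\nabla\fp(z)$ (the overall sign read off from \eqref{eq:beps_b}). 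Hence $\int_{\VR^d}A\nabla\fP\cdot\nabla\varphi\,dx=\int_\omega\zeta\cdot\nabla\varphi\,dx$ for all $\varphi\in C_c^\infty(\VR^d)$, and by density of $C_c^\infty(\VR^d)/\VR$ in $\dot{BL}(\VR^d)$ this is exactly \eqref{eq:exterior_beta}.

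To conclude, since $A$ is uniformly coercive the Lax--Milgram argument of Section~\ref{sec:3} gives uniqueness of the solution $[\fP]\in\dot{BL}(\VR^d)$ of \eqref{eq:exterior_beta}; thus the subsequential limit does not depend on the chosen subsequence, and the standard subsequence argument upgrades the convergence to the full family $\nabla\fP^\eps\rightharpoonup\nabla\fP$ in $L_2(\VR^d)^d$. For the second assertion, $\|\nabla(\eps\fP^\eps)\|_{L_2(\VR^d)}=\eps\|\nabla\fP^\eps\|_{L_2(\VR^d)}\le C\eps\to0$ while $(\eps\fP^\eps)$ is bounded in $L_2(\VR^d)$, so $(\eps\fP^\eps)$ is bounded in $H^1(\VR^d)$ and every weak $H^1$-limit $\xi$ of a subsequence satisfies $\nabla\xi=0$; hence $\xi$ is constant, and as $\xi\in L_2(\VR^d)$ it vanishes, giving $\eps\fP^\eps\rightharpoonup0$ in $H^1(\VR^d)$.

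The main obstacle is the passage to the limit in the coefficient term $\int\beta_\eps(T_\eps(\cdot))\nabla\fP^\eps\cdot\nabla\varphi\,dx$: one has only weak convergence of $\nabla\fP^\eps$, against a coefficient that itself depends on $\eps$. What makes the weak-convergence approach go through is that this term is paired with the \emph{fixed}, compactly supported $\nabla\varphi$, so $\beta_\eps(T_\eps(\cdot))\nabla\varphi$ converges strongly in $L_2$ and may be tested against the weak limit. A secondary technical point is that the domain $\Dsf_\eps$ on which the rescaled equation naturally lives varies with $\eps$; this is harmless because $\nabla\fP^\eps$ vanishes outside $\Dsf_\eps$ (zero extension of $\fp_\eps-\fp\in H^1_0(\Dsf)$) and $\Dsf_\eps$ eventually contains $\operatorname{supp}\varphi$. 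Everything else is bookkeeping with the a priori estimates of Lemmas~\ref{lem:u_ueps}, \ref{lem:dif_p_peps}, \ref{lem:bounds_scale_one} and the interior $C^1$-regularity of $\fp$ at $z$.
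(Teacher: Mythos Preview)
Your proposal is correct and follows essentially the same route as the paper's proof: rescale the difference of the averaged adjoint equations via $T_\eps$, use the a priori bound of Lemma~\ref{lem:bounds_scale_one} to extract a weak limit in $\dot{BL}(\VR^d)$, and pass to the limit term by term exploiting that the $\eps$-dependent coefficient converges strongly against the fixed test function while $\nabla\fP^\eps$ converges only weakly. The only cosmetic differences are that the paper works with test functions $\bar\psi\in H^1_0(\Dsf_{\bar\eps})$ for fixed $\bar\eps$ and then lets $\bar\eps\searrow 0$ (whereas you use $C^\infty_c(\VR^d)$ and density), and that for the second assertion the paper invokes Poincar\'e's inequality on balls to identify the weak $L_2$-limit as locally constant, while your argument---observing directly that $\nabla(\eps\fP^\eps)\to 0$ strongly forces any weak $H^1$-limit to be constant, hence zero---is a slight streamlining of the same idea.
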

	\begin{proof}  
	  Fix $\bar \eps >0$ and let $0<\eps <\bar \eps$. We first notice that using \eqref{eq:perturbed_adjoint_topo} we have
	  \ben\label{eq:b_0}
	  b_\eps(\psi,\fp_\eps -\fp) = - \int_{\Dsf}(u_\eps-u)\psi\;dx - (b_\eps - b_0)(\psi,\fp)
	  \een
	  for all $\psi \in H^1_0(\Dsf)$. The idea is now to choose appropriate test functions in \eqref{eq:b_0} and then pass to the limit. For this purpose let $\bar\psi\in H^1_0(\Dsf_{\bar \eps})$ be arbitrary and define $\psi:= \eps \bar\psi\circ T_\eps^{-1}$. 
	  Thanks to Lemma~\ref{lem:trafo_eps} we have $\psi \in H^1_0(\Dsf\setminus \overbar \Omega)$ and the latter space embeds via \eqref{eq:definition_p_tilde}
 into $H^1_0(\Dsf)$. Hence we readily check that for such a test function, using a change of variables, we have
 \begin{subequations}
	  \begin{align}
	    \label{eq:b_first}    b_\eps(\psi,\fp_\eps -\fp) = & \eps^d\int_{\Dsf_{\bar\eps}} A_\eps \nabla \bar \psi \cdot \nabla \fP^\eps\;dx  + \eps^{d+1}\underbrace{\int_{\Dsf_{\bar\eps}} (\varrho_\eps'(u) \circ T_\eps) \eps\fP^\eps \bar\psi\;dx}_{=: \rom{1}(\eps,\bar\psi)} \\ 
	    \label{eq:b_second}  (b_\eps - b_0)(\psi,\fp) = &  \eps^d \underbrace{\int_{\omega} (\beta_1-\beta_2)(T_\eps(x))\nabla \bar \psi \cdot\nabla \fp(T_\eps(x))\;dx}_{=:\rom{2}(\eps,\bar\psi)}   
	     \\ 
	  \nonumber
	  & + \eps^{d+1}\underbrace{\int_{\Dsf_{\bar \eps}} \left(\int_0^1 (\varrho_\eps'(su_\eps + (1-s)u)\circ T_\eps-\varrho_0'(u)\circ T_\eps\;ds\right)  \fp(T_\eps(x)) \bar\psi \;dx}_{=:\rom{3}(\eps,\bar\psi)}\\     \label{eq:b_third}\int_{\Dsf}(u_\eps-u)\psi\;dx =& \eps^{d+1} \underbrace{\int_{\Dsf_{\bar\eps}}(u_\eps\circ T_\eps -u\circ T_\eps)\bar \psi\;dx}_{=:\rom{4}(\eps,\bar\psi)}, 
	  \end{align}
\end{subequations}
where $A_\eps(x) := \beta_1(T_\eps(x))\chi_\omega(x) + \beta_2(T_\eps(x))\chi_{\VR^d\setminus \omega}$.
Therefore inserting \eqref{eq:b_first}-\eqref{eq:b_third} into \eqref{eq:b_0} we obtain
\ben\label{eq:Qeps_final}
\begin{split}
  \int_{\Dsf_{\bar\eps}} & A_\eps\nabla \bar\psi \cdot \nabla \fP^\eps\; dx   + \int_{\omega} (\beta_1-\beta_2)(T_\eps(x))\nabla \bar\psi \cdot\nabla \fp(T_\eps(x))\;dx= - \eps(\rom{1} - \rom{2} - \rom{3} + \rom{4})(\eps,\bar\psi) 
\end{split}
\een
for all $\eps <\bar \eps$ and all $\bar \psi \in H^1_0(\Dsf_{\bar\eps})$. The next step is to show that \rom{1}-\rom{4} are bounded. Using the boundedness of $u_\eps$ on $\Dsf$ we see that $\varrho_\eps'(s\tilde u_\eps  +(1-s)\tilde u)\circ T_\eps$ and $\varrho_0'(\tilde u)\circ T_\eps$ are bounded (independently of $\eps$) on $\VR^d$, too. Therefore H\"older's inequality yields
		\begin{align}
		  |\rom{1}(\eps,\bar\psi)| &\le c\|\eps{\fP}^\eps\|_{L_2(\VR^d)}\|\bar\psi\|_{L_2(\VR^d)} \stackrel{\eqref{eq:bound_p_eps}}{\le} C\|\bar\psi\|_{L_2(\VR^d)}, \label{eq:bound_A}\\
		  |\rom{2}(\eps,\bar\psi)| & \le c \|\nabla \fp\|_{C(\overbar B_\delta(z))}\|\nabla \bar\psi\|_{L_2(\VR^d)^d} \stackrel{\eqref{eq:bound_p_eps}}{\le} C\|\nabla\bar\psi\|_{L_2(\VR^d)^d},\\ 
		  |\rom{3}(\eps,\bar\psi)| & \le c\|q\|_{C(\overbar B_\delta(z))}\|\bar\psi\|_{L_2(\VR^d)}  \label{eq:bound_C} \\
                 |\rom{4}(\eps,\bar\psi)| & \le c\|\tilde u_\eps\circ T_\eps - \tilde u\circ T_\eps\|_{L_2(\VR^d)}\|\bar\psi\|_{L_2(\VR^d)} \stackrel{\eqref{eq:est_u_eps_D}}{\le} C\|\bar\psi\|_{L_2(\VR^d)} \label{eq:bound_D} 
	      \end{align}
	  for all $\bar \psi \in H^1_0(\Dsf_{\bar\eps})$ and $\eps \in [0,\bar\eps]$. 
	  Thanks to Lemma~\ref{lem:bounds_scale_one} the family $(Q^\eps)$ is bounded in $\dot{BL}(\VR^d)$. The latter space is a Hilbert space and therefore for every null-sequence $(\eps_n)$ we find a subsequence $(\eps_{n_k})$ and $[\fP]\in \dot{BL}(\VR^d)$, such that $\nabla Q^{\eps_{n_k}} \rightharpoonup \nabla \fP$ in $L_2(\VR^d)^d$, where $\fP\in [\fP]$. Hence selecting $\eps = \eps_{n_k}$ in \eqref{eq:Qeps_final} and taking into account \eqref{eq:bound_A}-\eqref{eq:bound_C} we can pass to the limit $k\to \infty$ and obtain
	  \ben\label{eq:weak_limit_Q}
	  \int_{\Dsf_{\bar \eps}} A \nabla \bar\psi \cdot \nabla \fP\;dx = - (\beta_1(z)-\beta_2(z))\nabla q(z)\cdot  \int_{\omega} \nabla \bar\psi \;dx \quad \text{ for all } \bar\psi \in H^1_0(\Dsf_{\bar \eps}). 
	      \een
  The mapping $\bar\eps \mapsto \Dsf_{\bar\eps}$ is monotonically decreasing and we have $H^1_0(\Dsf_{\bar \eps})\subset H^1_0(\VR^d)$. This shows, recalling that $\bar \eps>0$ is arbitrary, that $\Dsf_{\bar \eps}$ appearing in \eqref{eq:weak_limit_Q} may be replaced by $\VR^d$. But this means that $\fP$ is the unique solution of \eqref{eq:exterior_beta}.

  Let us now show that $\eps \fP^\eps \rightharpoonup 0$ in $H^1(\VR^d)$ as $\eps \searrow 0$. From the first part of the proof it is clear that $\nabla (\eps \fP^\eps) \rightharpoonup 0$ in $L_2(\VR^d)^d$. To see the weak convergence of $(\eps \fP^\eps)$ in $L_2(\VR^d)$ we fix $r>0$. Then Poincar\'e's inequality for a ball yields
	      \ben\label{eq:poincare}
	      \|(\eps \fP^\eps)_r - \eps \fP^\eps \|_{L_2(B_r(0))} \le  \eps C(r) \|\nabla \fP^\eps \|_{L_2(B_r(0))^d},
	      \een
	      where $(\eps \fP^\eps)_{r} := \fint_{B_r(0)} \eps \fP^\eps\;dx$ denotes the average over the ball $B_r(0)$. Since the gradient $\|\nabla \fP^\eps\|_{L_2(\VR^d)^d}$ is uniformly bounded (see Lemma~\ref{lem:bounds_scale_one}), the right hand side of \eqref{eq:poincare} goes to zero as $\eps \searrow 0$. But also $\eps \fP^\eps$ is bounded in $L_2(\VR^d)$ and therefore we find for any null-sequence $(\eps_n)$ a subsequence $(\eps_{n_k})$ and $\hat{\fP} \in L_2(\VR^d)$, such that $\eps_{n_k} \fP^{\eps_{n_k}} \rightharpoonup \hat{\fP}$ in $L_2(\VR^d)$. It is clear that $(\eps_{n_k} \fP^{\eps_{n_k}})_{B_r(0)} \to (\hat{\fP})_{B_r(0)}$ in $\VR$. Therefore we obtain from \eqref{eq:poincare} together with the weak lower semi-continuity of the $L_2$-norm
	      \ben
	      \|(\hat{\fP})_r - \hat{\fP}\|_{L_2(B_r(0))}	 \le     \liminf_{k\to \infty} \|(\fP^{\eps_{n_k}})_{B_r(0)} - \fP^{\eps_{n_k}}\|_{L_2(B_r(0))} \le 0.
\een	      
This shows that $\hat{\fP} = (\hat{\fP})_{r}$ a.e. on $B_r(0)$ and thus $\hat{\fP}$ is constant on $B_r(0)$. 
Since $r>0$ was arbitrary, $\hat{\fP}$ must be constant on $\VR^d$. Further $\hat{\fP}\in L_2(\VR^d)$ implies $\hat{\fP}=0$ and this finishes the proof. 
\end{proof}

We are now ready to compute  $R(u,\fp)$ and thereby verify the second part of Hypothesis (H2). 
\begin{lemma}\label{lem:verify_H3}
We have 
\ben\label{eq:form_R}
R(u,\fp) = (\beta_1(z)-\beta_2(z)) \nabla u(z)\cdot \fint_{\omega} \nabla \fP\; dx,
\een
where $[\fP]$ is the solution to \eqref{eq:exterior_beta}.
\end{lemma}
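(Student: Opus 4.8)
The plan is to write the difference $G(\eps,\fu,\fp_\eps)-G(\eps,\fu,\fp)$ out explicitly, localise it to the inclusion $\omega_\eps$ by subtracting the unperturbed state equation, rescale by $T_\eps$, and then pass to the limit using Theorem~\ref{thm:convegence_peps}. Since $\fp\mapsto G(\eps,\fu,\fp)$ is affine, the quadratic term $\int_\Dsf \fu^2\,dx$ drops out and
\[
G(\eps,\fu,\fp_\eps)-G(\eps,\fu,\fp)=\int_\Dsf \beta_\eps\nabla \fu\cdot\nabla(\fp_\eps-\fp)+\varrho_\eps(\fu)(\fp_\eps-\fp)-f_\eps(\fp_\eps-\fp)\,dx.
\]
As $\fu=\fu_0$ solves \eqref{eq:state_weak_chi} with $\Omega_0=\Omega$ and $\fp_\eps-\fp\in H^1_0(\Dsf)$ is an admissible test function, I would subtract $0=\int_\Dsf \beta_0\nabla \fu\cdot\nabla(\fp_\eps-\fp)+\varrho_0(\fu)(\fp_\eps-\fp)-f_0(\fp_\eps-\fp)\,dx$. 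For $\eps$ small we have $\omega_\eps\subset \Dsf\setminus\overline\Omega$, so $\chi_{\Omega_\eps}-\chi_\Omega=\chi_{\omega_\eps}$, whence $\beta_\eps-\beta_0=(\beta_1-\beta_2)\chi_{\omega_\eps}$, $\varrho_\eps(\fu)-\varrho_0(\fu)=(\varrho_1(\fu)-\varrho_2(\fu))\chi_{\omega_\eps}$ and $f_\eps-f_0=(f_1-f_2)\chi_{\omega_\eps}$. Therefore the whole expression collapses to an integral over $\omega_\eps$:
\[
G(\eps,\fu,\fp_\eps)-G(\eps,\fu,\fp)=\int_{\omega_\eps}(\beta_1-\beta_2)\nabla \fu\cdot\nabla(\fp_\eps-\fp)+(\varrho_1(\fu)-\varrho_2(\fu))(\fp_\eps-\fp)-(f_1-f_2)(\fp_\eps-\fp)\,dx.
\]

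Next I would divide by $|\omega_\eps|=\eps^d|\omega|$ and perform the change of variables $x=T_\eps(y)$. By Definition~\ref{def:variation_q} one has $(\fp_\eps-\fp)\circ T_\eps=\eps\fP^\eps$ on $\omega$ and, by the chain rule, $(\nabla(\fp_\eps-\fp))\circ T_\eps=\nabla\fP^\eps$ on $\omega$ for all small $\eps$. This yields
\[
\frac{G(\eps,\fu,\fp_\eps)-G(\eps,\fu,\fp)}{|\omega_\eps|}=\frac{1}{|\omega|}\int_\omega \big((\beta_1-\beta_2)\nabla \fu\big)(T_\eps(y))\cdot\nabla\fP^\eps(y)\,dy+\frac{\eps}{|\omega|}\int_\omega\big(\varrho_1(\fu)-\varrho_2(\fu)-(f_1-f_2)\big)(T_\eps(y))\,\fP^\eps(y)\,dy.
\]
Now pass to the limit $\eps\searrow 0$. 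Since $\beta_i\in C^1$ and $\fu\in C^1(\overline B_\delta(z))$ by Lemma~\ref{lem:aprior_u}(ii), the coefficient $\big((\beta_1-\beta_2)\nabla\fu\big)(T_\eps(\cdot))$ converges uniformly on $\omega$ to $(\beta_1(z)-\beta_2(z))\nabla\fu(z)$, while $\nabla\fP^\eps\rightharpoonup\nabla\fP$ in $L_2(\VR^d)^d$ by Theorem~\ref{thm:convegence_peps}; as a uniformly convergent times a weakly convergent sequence converges to the product of the limits, the first integral tends to $(\beta_1(z)-\beta_2(z))\nabla\fu(z)\cdot\fint_\omega\nabla\fP\,dy$. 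In the second integral the coefficient is bounded on $\omega$ (indeed convergent, because $\varrho_i\in C^1$ and $\fu,f_i$ are continuous near $z$) and $\eps\fP^\eps\rightharpoonup 0$ in $H^1(\VR^d)$, hence weakly in $L_2(\omega)$, so by the same argument the second integral tends to $0$. Collecting the two limits gives $R(\fu,\fp)=(\beta_1(z)-\beta_2(z))\nabla\fu(z)\cdot\fint_\omega\nabla\fP\,dx$, as claimed; in particular the limit defining $R(\fu,\fp)$ exists.

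The only delicate points are that $\fu$ is genuinely $C^1$ in a neighbourhood of $z$ — exactly the interior elliptic regularity recorded in Lemma~\ref{lem:aprior_u}(ii) — and the passage to the limit in the first integral, which rests entirely on the weak convergence $\nabla\fP^\eps\rightharpoonup\nabla\fP$ supplied by Theorem~\ref{thm:convegence_peps}. No remainder estimates on $\fp_\eps-\fp$ are needed, which is the whole advantage of the averaged adjoint framework; everything else (affineness in $\fp$, cancellation of the cost term, the collapse to $\omega_\eps$, and the change of variables) is elementary.
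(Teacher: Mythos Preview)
Your proof is correct and follows essentially the same route as the paper's: both test the unperturbed state equation with $\fp_\eps-\fp$ to reduce the difference $G(\eps,\fu,\fp_\eps)-G(\eps,\fu,\fp)$ to an integral over $\omega_\eps$, change variables via $T_\eps$, and pass to the limit using the weak convergences $\nabla\fP^\eps\rightharpoonup\nabla\fP$ and $\eps\fP^\eps\rightharpoonup 0$ from Theorem~\ref{thm:convegence_peps} together with the local $C^1$ regularity of $\fu$ and continuity of $f_1,f_2$ near $z$.
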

\begin{proof}
  Testing the state equation \eqref{eq:PDE_constraint_topo} (for $\eps =0$) with $\varphi = \fp_\eps - \fp$ gives
  \ben\label{eq:state_test_qeps}
  \int_\Dsf \beta_0 \nabla u \cdot \nabla (\fp_\eps-\fp) + \varrho_0(u) (\fp_\eps -\fp) \;dx = \int_\Dsf f_0 (\fp_\eps -\fp) \;dx.  
  \een
Therefore we can write for $\eps >0$,  
\ben\label{eq:difference_G}
	\begin{split}
	  G(\eps,u,\fp_\eps)-G(\eps,u,\fp) = &  \int_{\Dsf} \beta_\eps \nabla u \cdot \nabla (\fp_\eps -\fp) + \varrho_\eps(u) (\fp_\eps-\fp) \; dx - \int_\Dsf f_\eps (\fp_\eps - \fp)\;dx \\
	  \stackrel{\eqref{eq:state_test_qeps}}{=} & \int_{\omega_\eps} (\beta_1-\beta_2) \nabla u \cdot \nabla (\fp_\eps - \fp) + [(\varrho_1-\varrho_2)(u)  - (f_1-f_2)] (\fp_\eps - \fp)\; dx.
	\end{split}
	\een
	 Invoking the change of variables $T_\eps$ in \eqref{eq:difference_G} we obtain for $\eps >0$
	\ben
	\begin{split}
	  \frac{G(\eps,u,\fp_\eps)-G(\eps,u,\fp)}{|\omega_\eps|} = & \frac{1}{|\omega|}\int_{\omega} [(\varrho_1-\varrho_2)(u(T_\eps(x))) - (f_1-f_2)(T_\eps(x))] \eps\fP^\eps \; dx \\
							      & +\frac{1}{|\omega|}\int_{\omega}  ((\beta_1-\beta_2) \nabla u)(T_\eps(x)) \cdot \nabla \fP^\eps \; dx   \\
          & \quad \to (\beta_1(z)-\beta_2(z)) \nabla u(z) \cdot \fint_{\omega} \nabla \fP \;dx, 
      \end{split}
	\een
	where in the last step we used Theorem~\ref{thm:convegence_peps}, $f_1,f_2\in C(\overbar B_\delta(z))$, and $u \in C^1(\overbar B_\delta(z))$ for $\delta >0$ small. 
\end{proof}

\subsection{Topological derivative and polarisation matrix}\label{subsec:topo}
\paragraph{Topological derivative}
Now we are in a position to formulate our main result. In the previous sections we have checked that Hypotheses (H0)-(H4) of Theorem~\ref{thm:diff_lagrange} 
are satisfied for the Lagrangian $G$ given by \eqref{eq:lagrange_scale_eps}. Therefore Theorem~\ref{thm:main} can be applied and we obtain the following result. 
\begin{theorem}\label{thm:main}
  The topological derivative of $J$ at $\Omega$ in $z\in \Dsf\setminus\overbar \Omega$ is given by 
  \ben\label{eq:topo_final}
  \lim_{\eps\searrow 0} \frac{J(\Omega\cup \omega_\eps(z))-J(\Omega)}{|\omega_\eps(z)|} =  \partial_\ell G(0, u,\fp) + R(u,\fp),
\een
where 
\ben
\partial_\ell G(0, u,\fp) = \big((\beta_1-\beta_2)\nabla u\cdot \nabla \fp + (\varrho_1(u) - \varrho_2(u)) \fp - (f_1-f_2)\fp\big)(z)
\een
and 
\ben\label{eq:form_R_main}
R(u,\fp) = (\beta_1(z)-\beta_2(z)) \nabla u(z)\cdot\fint_{\omega} \nabla \fP\; dx,
\een
where $\fP$ depends on $z$ and is the solution to \eqref{eq:exterior_beta}.
\end{theorem}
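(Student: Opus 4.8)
The strategy is to apply Theorem~\ref{thm:diff_lagrange} to the parametrised Lagrangian $G$ of \eqref{eq:lagrange_scale_eps} with the normalisation $\ell(\eps) := |\omega_\eps(z)| = \eps^d|\omega|$, and then to identify the abstract derivative $d_\ell g(0)$ with the topological derivative on the left of \eqref{eq:topo_final}. The link between the two is the identity $g(\eps) = J(\Omega_\eps)$: since the perturbed state equation \eqref{eq:PDE_constraint_topo} has a unique solution $u_\eps$, the set $E(\eps) = X(\eps) = \{u_\eps\}$ is a singleton, and evaluating $G$ at $(\eps,u_\eps,0)$ annihilates the terms involving $\beta_\eps$, $\varrho_\eps$ and $f_\eps$ since they are multiplied by $\fp = 0$, so that $g(\eps) = G(\eps,u_\eps,0) = \int_\Dsf u_\eps^2\,dx = J(\Omega_\eps)$. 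Hence $(J(\Omega\cup\omega_\eps(z)) - J(\Omega))/|\omega_\eps(z)| = (g(\eps) - g(0))/\ell(\eps)$, whose limit as $\eps\searrow0$ is exactly $d_\ell g(0)$.

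It then suffices to collect the verifications of Hypotheses~(H0)--(H4) carried out in the preceding subsections. Hypothesis~(H0) holds because $X(\eps) = \{u_\eps\}$ is nonempty by Lemma~\ref{lem:aprior_u}. Hypothesis~(H1) follows from Assumption~\ref{ass:varrho_monotone}: the $C^1$-regularity of the $\varrho_i$ together with the uniform $L_\infty$-bound on $u_\eps$ and $u$ makes $s\mapsto G(\eps,su_\eps+(1-s)u,\fp)$ continuously differentiable on $[0,1]$, hence absolutely continuous, with $\fu$-derivative in $L_1(0,1)$. Hypothesis~(H2) holds since $b_\eps$ is coercive and continuous, so $Y(\eps,u,u_\eps) = \{q_\eps\}$ is a singleton by Lax--Milgram. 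Hypothesis~(H3) is Lemma~\ref{lem:hypo_H2}, which also supplies the explicit value of $\partial_\ell G(0,u,\fp)$, and Hypothesis~(H4) is Lemma~\ref{lem:verify_H3}, which gives the explicit form of $R(u,\fp)$ via the weak limit $\fP$ from Theorem~\ref{thm:convegence_peps}.

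With all hypotheses in place, Theorem~\ref{thm:diff_lagrange} yields $d_\ell g(0) = \partial_\ell G(0,u,\fp) + R(u,\fp)$; substituting the formulas from Lemmas~\ref{lem:hypo_H2} and~\ref{lem:verify_H3} produces \eqref{eq:topo_final} together with the stated expressions, where $\fP$ solves \eqref{eq:exterior_beta}. Since, by Theorem~\ref{thm:diff_lagrange}, $R$ depends only on $u$ and $\fp$ and not on the choice of approximating sequences $(u_\eps)$, $(q_\eps)$, the formula is unambiguous.

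The substantive work — the a priori bound on the rescaled adjoint variation (Lemma~\ref{lem:bounds_scale_one}) and the identification of its weak limit with the exterior problem \eqref{eq:exterior_beta} (Theorem~\ref{thm:convegence_peps}) — has already been done. In this concluding step the only point that needs attention is the bookkeeping identity $g(\eps) = J(\Omega_\eps)$ and the compatibility of the normalisation $\ell(\eps) = |\omega_\eps(z)|$ with the definition \eqref{eq:topo_definition}; everything else is a direct invocation of the abstract framework of Section~\ref{sec:2}.
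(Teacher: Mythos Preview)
Your proof is correct and follows the same approach as the paper, which simply notes that Hypotheses~(H0)--(H4) have been verified in the preceding subsections and then invokes Theorem~\ref{thm:diff_lagrange}. If anything, your version is more explicit than the paper's: you spell out the identification $g(\eps)=J(\Omega_\eps)$ and itemise which lemma delivers which hypothesis, whereas the paper leaves this as a one-sentence summary preceding the theorem statement.
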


Next we rewrite the term $R(u,\fp)$ with the help of the so-called polarisation matrix. For this purpose we fix $z\in \Dsf\setminus\overbar \Omega$ in the following and denote by $[\fP_{\zeta}]$, $\zeta\in \VR^d$, the solution to \eqref{eq:exterior_beta0} with $A:=A_\omega := \beta_1(z)\chi_\omega + \beta_2(z)\chi_{\VR^d\setminus \omega}$. Also we denote by $\fP_{\zeta}$ an arbitrary representative of $[\fP_\zeta]$.

\begin{definition}
The matrix representing the linear \emph{averaging operator}
	\ben
	\zeta \mapsto \fint_{\omega} \nabla \fP_{\zeta}\; dx, \; \VR^d \mapsto \VR^d
	\een
	 is called \emph{weak polarisation matrix} and will be denoted $\Cp_z\in \VR^{d\times d}$. Notice that this matrix depends on $\beta_1(z)$ and $\beta_2(z)$. 
\end{definition}
We use the term weak polarisation matrix here, because it is defined via the weak formulation \eqref{eq:exterior_beta0} and therefore does not require any regularity assumptions on $\partial \omega$ or $\Omega$. We give another definition of a polarisation matrix later and relate it to the weak polarisation matrix. We also refer to \cite{a_POSZ_1951a} and the monograph \cite[Sec. 9.4.4, pp. 273]{b_NOSO_2013a}. 

	\begin{corollary}
       We have
       \ben
       \begin{split}
	 \lim_{\eps\searrow 0} \frac{J(\Omega\cup \omega_\eps(z))-J(\Omega)}{|\omega_\eps|}       = & (((\beta_1-\beta_2) \nabla u)\cdot (I-\Cp_z(\beta_1-\beta_2))\nabla \fp)(z)  \\ 
												    &+  ((\varrho_1(u) - \varrho_2(u)) \fp -(f_1-f_2)\fp)(z).
     \end{split}
       \een
	\end{corollary}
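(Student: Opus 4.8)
The plan is to combine Theorem~\ref{thm:main} with the definition of the weak polarisation matrix $\Cp_z$, so essentially no new analysis is required---this is a bookkeeping corollary. First I would invoke Theorem~\ref{thm:main} to write the topological derivative as
\ben
\partial_\ell G(0,u,\fp) + R(u,\fp),
\een
with $\partial_\ell G(0,u,\fp) = \big((\beta_1-\beta_2)\nabla u\cdot \nabla \fp + (\varrho_1(u)-\varrho_2(u))\fp - (f_1-f_2)\fp\big)(z)$ and $R(u,\fp) = (\beta_1(z)-\beta_2(z))\nabla u(z)\cdot \fint_\omega \nabla \fP\, dx$, where $[\fP]$ solves \eqref{eq:exterior_beta}.

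The key step is to identify the vector $\fint_\omega \nabla \fP\, dx$ with $\Cp_z\zeta$ for the right $\zeta$. By Theorem~\ref{thm:convegence_peps}, $\fP$ solves \eqref{eq:exterior_beta} with $A = A_\omega = \beta_1(z)\chi_\omega + \beta_2(z)\chi_{\VR^d\setminus\omega}$ and right-hand side given by $\zeta := -(\beta_1(z)-\beta_2(z))\nabla \fp(z)$; that is, $\fP = \fP_\zeta$ in the notation of the definition of the weak polarisation matrix. Hence by definition of $\Cp_z$ we have $\fint_\omega \nabla \fP\, dx = \Cp_z\zeta = -\Cp_z(\beta_1(z)-\beta_2(z))\nabla\fp(z)$. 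Substituting this into the formula for $R(u,\fp)$ gives
\ben
R(u,\fp) = -\big((\beta_1-\beta_2)\nabla u\big)(z)\cdot \Cp_z(\beta_1-\beta_2)(z)\nabla\fp(z).
\een

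Finally I would add $\partial_\ell G(0,u,\fp)$ and $R(u,\fp)$ and collect the two terms that involve $\nabla u\cdot\nabla\fp$: the term $(\beta_1-\beta_2)\nabla u\cdot\nabla\fp$ from $\partial_\ell G$ and the term $-(\beta_1-\beta_2)\nabla u\cdot\Cp_z(\beta_1-\beta_2)\nabla\fp$ from $R$ combine into $((\beta_1-\beta_2)\nabla u)\cdot(I-\Cp_z(\beta_1-\beta_2))\nabla\fp$ evaluated at $z$, while the remaining terms $(\varrho_1(u)-\varrho_2(u))\fp - (f_1-f_2)\fp$ are untouched. This yields exactly the claimed expression. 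There is no real obstacle here; the only point to be careful about is matching the sign and the argument $z$ in the identification $\zeta = -(\beta_1(z)-\beta_2(z))\nabla\fp(z)$ and correctly factoring the bilinear expression in $\nabla u$ and $\nabla\fp$ when pulling out $\Cp_z(\beta_1-\beta_2)$.
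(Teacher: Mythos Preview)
Your proposal is correct and follows essentially the same approach as the paper: invoke Theorem~\ref{thm:main}, recognise that $\fP=\fP_\zeta$ with $\zeta=-(\beta_1(z)-\beta_2(z))\nabla\fp(z)$, apply the definition $\Cp_z\zeta=\fint_\omega\nabla\fP_\zeta\,dx$, and regroup terms. The paper compresses this into a single sentence, but the content is identical.
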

      \begin{proof}
	This follows at once from \eqref{eq:topo_final} noting that $\Cp_z\zeta = \fint_{\omega} \nabla \fP_{\zeta}\;dx$, where $\zeta := -(\beta_1(z)-\beta_2(z))\nabla \fp(z)$. 
      \end{proof}

\paragraph{Further properties of the polarisation matrix}
Next we derive further properties of the polarisation matrix. Furthermore
we relate our polarisation matrix to previous definitions. We refer the reader to \cite{b_AMKA_2007a} for further information on polarisation matrices.

\begin{lemma}\label{lem:pol_sym}
	If $\beta_2(z)=\beta_2^\top(z)$ and $\beta_1(z) = \beta_1^\top(z)$, then the polarisation matrix is symmetric, that is, $\Cp_z =\Cp_z^\top.$
      \end{lemma}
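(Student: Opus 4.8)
The plan is to exploit the symmetry of the coefficient matrix $A = A_\omega := \beta_1(z)\chi_\omega + \beta_2(z)\chi_{\VR^d\setminus\omega}$: under the hypothesis $\beta_1(z)=\beta_1^\top(z)$, $\beta_2(z)=\beta_2^\top(z)$ one has $A(x)=A(x)^\top$ for a.e.\ $x\in\VR^d$, and moreover $A$ is uniformly coercive with constants $\beta_m,\beta_M$, so the exterior problem \eqref{eq:exterior_beta0} (with this $A$) is uniquely solvable in $\dot{BL}(\VR^d)$ and has one and the same space $BL(\VR^d)$ as its solution space and its test space. That coincidence is exactly what makes the argument work.

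First I would fix arbitrary vectors $\zeta,\eta\in\VR^d$ and choose representatives $\fP_\zeta\in[\fP_\zeta]$ and $\fP_\eta\in[\fP_\eta]$ of the solutions of \eqref{eq:exterior_beta0} for the data $\zeta$ and $\eta$, respectively. Since $\fP_\eta,\fP_\zeta\in BL(\VR^d)$ are admissible test functions, inserting $\psi=\fP_\eta$ into the equation for $\fP_\zeta$ and $\psi=\fP_\zeta$ into the equation for $\fP_\eta$ gives
\ben
\int_{\VR^d} A\nabla\fP_\eta\cdot\nabla\fP_\zeta\,dx = \int_\omega \zeta\cdot\nabla\fP_\eta\,dx
\qquad\text{and}\qquad
\int_{\VR^d} A\nabla\fP_\zeta\cdot\nabla\fP_\eta\,dx = \int_\omega \eta\cdot\nabla\fP_\zeta\,dx .
\een
Although the chosen representatives are only determined up to additive constants, the gradients $\nabla\fP_\zeta,\nabla\fP_\eta\in L_2(\VR^d)^d$ are genuinely well-defined, so every integral above is unambiguous; this takes care of the only mild point of care, namely well-definedness of the bilinear pairing on the quotient space.

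Next, by the symmetry $A=A^\top$ one has $A\nabla\fP_\eta\cdot\nabla\fP_\zeta = A\nabla\fP_\zeta\cdot\nabla\fP_\eta$ pointwise a.e., so the left-hand sides of the two identities above coincide, whence
\ben
\int_\omega \zeta\cdot\nabla\fP_\eta\,dx = \int_\omega \eta\cdot\nabla\fP_\zeta\,dx .
\een
Dividing by $|\omega|$ and using $\Cp_z\eta=\fint_\omega\nabla\fP_\eta\,dx$, $\Cp_z\zeta=\fint_\omega\nabla\fP_\zeta\,dx$ from the definition of the weak polarisation matrix, this reads $(\Cp_z\eta)\cdot\zeta=(\Cp_z\zeta)\cdot\eta$. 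Since $\zeta,\eta\in\VR^d$ were arbitrary, this is precisely $\Cp_z=\Cp_z^\top$.

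There is no genuine obstacle here: the argument is the standard ``test each solution against the other one'' symmetry trick, and the only thing one must not gloss over is that the pairing is to be read on gradients (so that passing to the quotient $\dot{BL}(\VR^d)$ causes no ambiguity) and that $\fP_\eta,\fP_\zeta$ are legitimate test functions, which is immediate because the test space in \eqref{eq:exterior_beta0} is all of $BL(\VR^d)$.
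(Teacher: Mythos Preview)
Your proof is correct and follows essentially the same approach as the paper: test the equation for $\fP_\zeta$ with $\fP_\eta$ and vice versa, then use the symmetry of $A_\omega$ to equate the two bilinear forms. The only cosmetic difference is that the paper works with basis vectors $e_i,e_j$ instead of arbitrary $\zeta,\eta$, and your remark about well-definedness on the quotient $\dot{BL}(\VR^d)$ is a welcome point of care that the paper leaves implicit.
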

      \begin{proof}
  We compute for the $(i,j)$-entry of the polarisation matrix: 
  \ben\label{eq:polarisation_symm}
    \begin{split}
      e_i\cdot \Cp_ze_j  = e_i \cdot \fint_{\omega} \nabla \fP_{e_j}\;dx & \stackrel{\eqref{eq:exterior_beta}}{=} \int_{\VR^d} A_\omega \nabla \fP_{e_j}\cdot \nabla \fP_{e_i}  \; dx\\
								       & \stackrel{\text{sym. of } A_\omega}{=}  \int_{\VR^d} \nabla \fP_{e_j}\cdot A_\omega \nabla \fP_{e_i}  \; dx\\
		      & \stackrel{\eqref{eq:exterior_beta}}{=} e_j \cdot \fint_{\omega} \nabla \fP_{e_i}\;dx  = e_j\cdot \Cp_ze_i.
    \end{split}
    \een
This shows the symmetry. 
      \end{proof}
The polarisation matrix is also positive definite (even in the nonsymmetric case).
\begin{lemma}\label{lem:pol_pos}
The matrix $\Cp_z$ is positive definite. 
\end{lemma}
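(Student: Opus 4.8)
The plan is to read positive definiteness straight off the energy identity underlying the weak polarisation matrix. First I would test the defining equation \eqref{eq:exterior_beta0} (with $A:=A_\omega$) against the solution $\fP_\zeta$ itself, which is admissible since $\fP_\zeta\in BL(\VR^d)$; this gives
\benn
\zeta\cdot\Cp_z\zeta \;=\; \frac{1}{|\omega|}\int_\omega \zeta\cdot\nabla\fP_\zeta\;dx \;=\; \frac{1}{|\omega|}\int_{\VR^d} A_\omega\nabla\fP_\zeta\cdot\nabla\fP_\zeta\;dx .
\eenn
Since $A_\omega(x)$ equals one of the two matrices $\beta_1(z),\beta_2(z)$ for a.e.\ $x$, the coercivity bound \eqref{eq:betam} applies to $A_\omega$ and the right-hand side is at least $\tfrac{\beta_m}{|\omega|}\|\nabla\fP_\zeta\|_{L_2(\VR^d)^d}^2\ge 0$. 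In particular no symmetry of $A_\omega$ is used, so here ``positive definite'' is to be read as $\zeta\cdot\Cp_z\zeta>0$ for every $\zeta\ne 0$.

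It then remains to show that $\nabla\fP_\zeta\not\equiv 0$ whenever $\zeta\ne 0$. I would argue by contradiction: if $\nabla\fP_\zeta=0$ a.e.\ on $\VR^d$ then $[\fP_\zeta]=0$ in $\dot{BL}(\VR^d)$, so the left-hand side of \eqref{eq:exterior_beta0} vanishes for every test function, i.e.\ $\int_\omega\zeta\cdot\nabla\psi\;dx=0$ for all $\psi\in BL(\VR^d)$. Choosing a cut-off $\eta\in C^\infty_c(\VR^d)$ with $\eta\equiv 1$ on a neighbourhood of the bounded set $\overbar\omega$ and setting $\psi(x):=\eta(x)\,(\zeta\cdot x)\in C^\infty_c(\VR^d)\subset BL(\VR^d)$, we have $\nabla\psi=\zeta$ on $\omega$, hence $\int_\omega\zeta\cdot\nabla\psi\;dx=|\omega|\,|\zeta|^2>0$, a contradiction. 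Therefore $\|\nabla\fP_\zeta\|_{L_2(\VR^d)^d}>0$ and $\zeta\cdot\Cp_z\zeta>0$.

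The argument is short, and I do not foresee a real obstacle. The two points needing a word of care are (i) that $A_\omega$ inherits the uniform coercivity constant $\beta_m$ from Assumption~\ref{ass:varrho_monotone}(a), which is immediate from the pointwise alternative $A_\omega(x)\in\{\beta_1(z),\beta_2(z)\}$, and (ii) the admissibility of the truncated linear test function, which only uses boundedness of $\omega$ and the fact that $C^\infty_c(\VR^d)$ (modulo constants) is contained in $\dot{BL}(\VR^d)$. Conceptually the statement just says that $\Cp_z$ is, up to the factor $1/|\omega|$, the Gram matrix of the correctors $\fP_{e_1},\dots,\fP_{e_d}$ in the $A_\omega$-energy pairing, which is a positive quadratic form because $\zeta\mapsto\nabla\fP_\zeta$ is injective.
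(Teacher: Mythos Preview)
Your proof is correct and essentially identical to the paper's argument: both test the defining equation with the solution itself to obtain the energy identity $\zeta\cdot\Cp_z\zeta=\frac{1}{|\omega|}\int_{\VR^d}A_\omega\nabla\fP_\zeta\cdot\nabla\fP_\zeta\,dx$, invoke the uniform coercivity of $A_\omega$, and rule out the degenerate case by plugging in a truncated linear test function on $\omega$. The only cosmetic differences are that the paper expands in the canonical basis (writing $W=\sum_i w_i\fP_{e_i}$ and testing with each $\varphi_i(x)=\rho(x)\,e_i\cdot x$ separately), whereas you work directly with $\fP_\zeta$ and the single test function $\psi(x)=\eta(x)\,\zeta\cdot x$; by linearity of $\zeta\mapsto\fP_\zeta$ these are the same computation.
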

\begin{proof}
  Let $w=(w_1,\ldots, w_d)\in \VR^d$ be an arbitrary vector. Set $W := \sum_{i=1}^d w_i \fP_{e_i}$. Then we readily check using \eqref{eq:polarisation_symm},
  \ben\label{eq:wPw}
  w\cdot \Cp_zw = \int_{\VR^d} A_\omega \nabla W\cdot \nabla W\;dx \ge \beta_m \int_{\VR^d} |\nabla W|^2\;dx.
  \een
  This shows that $\Cp_z$ is positive semidefinite. Suppose now $w$ is such that $w\cdot \Cp_zw=0$. Then, in view of \eqref{eq:wPw}, we must have $[W]=[0]$. Hence \eqref{eq:exterior_beta} gives
  \ben\label{eq:w_dot_F}
   w\cdot \fint_{\omega}\nabla \varphi\;dx =0 \quad \text{ for all } \varphi \in \text{BL}(\VR^d).
  \een
  Let $V\subset \VR^d$ be a bounded and open set, such that $\omega\Subset V$. Choose a smooth function $\rho$, such that $\rho=1$ on $\omega$, $0\le \rho\le 1$ on $V\setminus \omega$ and $\rho=0$ outside of $V$. Then we define $\varphi(x) := e_i\cdot x \rho(x)$ for $i\in \{1,\ldots, d\}$, which belongs to $\text{BL}(\VR^d)$. Hence we may test \eqref{eq:w_dot_F} with this function and conclude $w_i=0$. This shows $w=0$ and finishes the proof.
\end{proof}

Suppose from now on $\beta_1=\gamma_1I$ and $\beta_2=\gamma_2I$ for $\gamma_1,\gamma_2>0$. We select $\fP_\zeta\in [\fP_\zeta]$ and suppose that it can be represented by a single layer potential: there is a function $h_\zeta\in C(\partial \omega)$, such that
\ben\label{eq:single_layer}
\fP_\zeta(x) = \int_{\partial \omega} h_\zeta(y) E(x-y)\; ds(y), \qquad \int_{\partial \omega} h_{\zeta}\;ds =0,
\een
where $E$ denotes the fundamental solution of $u\mapsto -\Delta u$; \cite[Chap. 3]{b_FO_1995a}. It is readily checked using \eqref{eq:single_layer} that $|\fP_\zeta(x)| = O(|x|^{1-d})$.

\begin{definition}
  The  \emph{strong polarisation matrix} is the matrix $\tilde{\Cp}_z =(\tilde{\Cp_z})_{ij} \in \VR^{d\times d}$ with entries
\ben
(\tilde{\Cp_z})_{ij} =  \int_{\partial \omega} x_j h_{e_i} \;ds.
\een
\end{definition}

The strong and weak polarisation matrices are related as shown in the following lemma.

\begin{lemma}
 Assume that $\partial \omega$ is $C^2$. Then we have 
\ben\label{eq:pol_weak-strong_}
\Cp_z =  -  \frac{1}{|\omega|}\frac{\beta_2}{\beta_1-\beta_2}\tilde{\Cp}_z + \frac{1}{\beta_1-\beta_2}I.
\een
\end{lemma}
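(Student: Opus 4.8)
The plan is to recast the weak exterior problem \eqref{eq:exterior_beta0} as a classical transmission problem across $\partial\omega$, use single-layer potential theory to express the interior normal trace of $\nabla\fP_\zeta$ on $\partial\omega$ through the density $h_\zeta$, and then convert the volume average $\fint_\omega\nabla\fP_\zeta\,dx$ into a boundary integral over $\partial\omega$. Throughout, $\beta_1,\beta_2$ denote the scalars $\gamma_1,\gamma_2$ (we tacitly assume $\gamma_1\neq\gamma_2$, otherwise the right-hand side of \eqref{eq:pol_weak-strong_} is not defined), $A_\omega=\beta_1\chi_\omega+\beta_2\chi_{\VR^d\setminus\omega}$, and $\fP_\zeta=Sh_\zeta$ is the single-layer representative from \eqref{eq:single_layer}; note that $\zeta\mapsto h_\zeta$ is linear by uniqueness of this representation.

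The first step records two identities on $\partial\omega$. Since $E$ is the fundamental solution of $-\Delta$ and $\partial\omega\in C^2$, the function $\fP_\zeta$ is harmonic in $\omega$ and in $\VR^d\setminus\overline\omega$, continuous across $\partial\omega$, and the classical jump relation for the single-layer potential \cite[Chap.~3]{b_FO_1995a} reads $\partial_\nu\fP_\zeta^- - \partial_\nu\fP_\zeta^+ = h_\zeta$ on $\partial\omega$, where $\nu$ is the outward unit normal of $\omega$ and $\fP_\zeta^\pm$ denote the traces from inside, respectively outside, $\omega$. On the other hand, testing \eqref{eq:exterior_beta0} (with $A=A_\omega$) against $\psi\in C^\infty_c(\VR^d)$ and integrating by parts over $\omega$ and over $\VR^d\setminus\overline\omega$ separately — using that $\fP_\zeta$ is harmonic in each region and that $\zeta$ is constant — yields the flux transmission condition $\gamma_1\partial_\nu\fP_\zeta^- - \gamma_2\partial_\nu\fP_\zeta^+ = \zeta\cdot\nu$ on $\partial\omega$. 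Eliminating $\partial_\nu\fP_\zeta^+$ between these two identities gives $\partial_\nu\fP_\zeta^- = (\zeta\cdot\nu - \gamma_2 h_\zeta)/(\gamma_1-\gamma_2)$ on $\partial\omega$.

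Next, since $\fP_\zeta$ is harmonic in $\omega$, Green's identity with the test function $x_i$ gives $\int_\omega\partial_i\fP_\zeta\,dx = \int_{\partial\omega}x_i\,\partial_\nu\fP_\zeta^-\,ds$ for each $i$. Inserting the formula for $\partial_\nu\fP_\zeta^-$, using $\int_{\partial\omega}x_i\nu_j\,ds=\delta_{ij}|\omega|$ (divergence theorem), dividing by $|\omega|$, taking $\zeta=e_j$, and recalling $\int_{\partial\omega}x_i\,h_{e_j}\,ds=(\tilde{\Cp}_z)_{ji}$, one obtains
\[
(\Cp_z)_{ij}=\fint_\omega\partial_i\fP_{e_j}\,dx = \frac{\delta_{ij}}{\gamma_1-\gamma_2} - \frac{\gamma_2}{|\omega|(\gamma_1-\gamma_2)}(\tilde{\Cp}_z)_{ji},
\]
that is, $\Cp_z = \frac{1}{\gamma_1-\gamma_2}I - \frac{\gamma_2}{|\omega|(\gamma_1-\gamma_2)}\tilde{\Cp}_z^{\top}$. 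Finally, since $\beta_1(z)$ and $\beta_2(z)$ are symmetric, $\Cp_z$ is symmetric by Lemma~\ref{lem:pol_sym}; as $I$ is symmetric this forces $\tilde{\Cp}_z^{\top}=\tilde{\Cp}_z$, and substituting yields \eqref{eq:pol_weak-strong_}.

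The computation itself is short; the point requiring care is the orientation bookkeeping in the second step, i.e. keeping straight which of the traces $\fP_\zeta^\pm$ and which sign of the normal derivative enters the potential-theoretic jump relation as opposed to the flux condition extracted from \eqref{eq:exterior_beta0} (together with the effect of the exterior domain having outward normal $-\nu$), as well as the transpose appearing in the final display, which is removed only by appealing to the already established symmetry of $\Cp_z$. One should also check that $\fP_\zeta$ is regular enough near $\partial\omega$ for these integrations by parts, which is guaranteed by $\partial\omega\in C^2$ together with $h_\zeta\in C(\partial\omega)$.
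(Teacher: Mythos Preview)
Your argument is correct and follows the same route as the paper: convert $\fint_\omega\partial_i\fP_{e_j}\,dx$ to a boundary integral via integration by parts using harmonicity of $\fP_{e_j}$ in $\omega$, combine the single-layer jump relation with the flux transmission condition to express the interior normal derivative through $h_{e_j}$, and finish with the divergence theorem. The only discrepancy is a verbal slip in your $\pm$ convention (you write ``inside, respectively outside'' but your equations and your use of $\partial_\nu\fP_\zeta^-$ in Green's identity on $\omega$ are consistent with $-$ being the \emph{interior} trace, matching the paper's $+$); also, your explicit appeal to Lemma~\ref{lem:pol_sym} to remove the transpose makes visible a point the paper's proof passes over silently.
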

\begin{proof}
At first we obtain by partial integration, noting that $e_i = \nabla x_i$,
\ben\label{eq:pol_strong-weak}
\begin{split}
  e_i\cdot \Cp_z e_j & = \fint_{\omega} \nabla x_i \cdot \nabla \fP_{e_j}\;ds = \frac{1}{|\omega|}\int_{\partial\omega} x_i \partial_\nu \fP_{e_j}\;ds -  \fint_{\omega} \underbrace{ \Delta \fP_{e_j}}_{=0, \text{ in view of } \eqref{eq:exterior_beta}}\;dx.
\end{split}
\een
Next we express $\partial_\nu \fP_{e_j}$ in terms of $h_{e_j}$. For this recall (see, e.g., \cite{b_FO_1995a}) that the jump condition
\ben\label{eq:jump_potential}
\begin{split}
  \partial_\nu \fP^+_{e_i} - \partial_\nu \fP^-_{e_i} = h_{e_i} \quad \text{ on } \partial \omega
\end{split}
\een
is satisfied. In addition we get from \eqref{eq:exterior_beta},
\ben\label{eq:jump_PDE}
\beta_1\partial_\nu \fP^+_{e_i} - \beta_2\partial_\nu \fP^-_{e_i}  = e_i\cdot \nu \quad \text{ on } \partial \omega.
\een
Combining \eqref{eq:jump_potential} and \eqref{eq:jump_PDE} we obtain
\ben
\partial_\nu \fP^+_{e_i} = - \frac{\beta_2}{\beta_1-\beta_2}h_{e_i} + \frac{1}{\beta_1-\beta_2}e_i\cdot \nu. 
\een
Inserting this expression into \eqref{eq:pol_strong-weak} yields
\ben
e_i\cdot \Cp_z e_j  = -  \frac{\beta_2}{\beta_1-\beta_2}\frac{1}{|\omega|}\int_{\partial\omega} x_i h_{e_j}\;ds + \frac{1}{\beta_1-\beta_2}\frac{1}{|\omega|}\int_{\partial\omega} (e_i \cdot \nu) x_j\;ds.
\een
This is equivalent to formula \eqref{eq:pol_weak-strong_}, since by Gauss's divergence theorem 
\ben
\frac{1}{|\omega|}\int_{\partial\omega} (e_i \cdot \nu) x_j \;ds= \fint_{\omega} \underbrace{\Div(e_ix_j)}_{=\delta_{ij}}\;dx = \delta_{ij}.
\een
\end{proof}


\begin{remark}
	In some cases, see, e.g., \cite{a_AM_2006a,a_KIKAKI_2003a, a_AMKAKI_2005a}, the polarisation matrix can be computed explicitly: for instance when $\beta_1=\gamma_1 I,\beta_2=\gamma_2I$, $\beta_1,\beta_2>0$, and $\omega$ is a circle or more generally an ellipse. However for general inclusions $\omega$ the exterior equation \eqref{eq:exterior_beta} has to be solved numerically in order to evaluate formula \eqref{eq:topo_final}.
\end{remark}

\section{The extremal case of void material}
In this last section we discuss the extremal situation in which $\beta_1=0$, $\varrho_1=0$ and $f_1=0$ in \eqref{eq:state_weak_chi}. This case corresponds to the insertion of a hole with Neumann boundary conditions imposed on the inclusion; see \cite{a_IGNAROSOSZ_2009a}. Since the extremal case is similar to the considerations from the previous section, we will only work out the main differences in detail.

\subsection{Problems setting}
We suppose as before that $\Dsf\subset \VR^d$ is a bounded Lipschitz domain. For a simply connected domain $\Omega\Subset \Dsf$ with Lipschitz boundary $\partial \Omega$, we consider the shape function
\ben\label{eq:cost_extreme}
J(\Omega) = \int_{\Dsf\setminus \overbar \Omega} u^2\;dx
\een
subject to $u=u_\Omega\in H^1_{\partial \Dsf}(\Dsf\setminus \overbar \Omega)$ solves
\ben\label{eq:state_weak_chi_extreme}
\int_{\Dsf\setminus \overbar \Omega} \beta_2 \nabla u\cdot \nabla \varphi + \varrho_2(u)\varphi \; dx = \int_{\Dsf\setminus \overbar \Omega} f_2\varphi\;dx \quad \text{ for all }  \varphi \in H^1_{\partial \Dsf}(\Dsf\setminus \overbar \Omega),
\een
where $H^1_{\partial \Dsf}(\Dsf\setminus \overbar \Omega):=\{v\in H^1(\Dsf\setminus \overbar \Omega):\;v=0 \text{ on } \partial \Dsf\}$.
This setting corresponds to the limiting case of \eqref{eq:state_weak_chi} in which $\beta_1=0$, $\varrho_1=0$ and $f_1=0$. 

The rest of this section is dedicated to the computation of the topological sensitivity of $J$ at $\Omega=\emptyset$ with respect to the inclusion $\omega$ (which will be specified below), i.e., 
\ben
\lim_{\eps\searrow 0}  \frac{J(\omega_\eps) - J(\emptyset)}{|\omega_\eps|}. 
\een
We will see that almost all steps are the same as in the last section with two main differences. The first main difference being that $X(\eps)$ is not 
a singleton and that we have to introduce a new equation on the inclusion, which requires 
a more detailed explanation and a thorough  analysis. The second difference concerns 
the required assumptions on the regularity of the inclusion $\omega$. While in the previous section it was sufficient to assume that $\omega$ is merely an open set, here we strengthen the assumption and assume that $\omega$ is a simply connected Lipschitz domain. 

\begin{assumption}\label{ass:varrho_monotone_extreme}
We assume that either 
\begin{itemize}
  \item[(a)] $\beta_2\in \VR^{d\times d}$ is symmetric, positive definite and $\varrho_2$ satisfies Assumption~\ref{ass:varrho_monotone}, (b) and it is bounded, or 
\end{itemize}
\begin{itemize}
  \item[(b)] $\beta_2$ satisfies Assumption~\eqref{ass:varrho_monotone}, (a) and $\varrho_2$ satisfies Assumption~\ref{ass:varrho_monotone}, (b) and additionally $\varrho_2'> \lambda$ for some $\lambda >0$, 
\end{itemize}
is satisfied. In both cases we assume $f_2\in H^1(\Dsf)\cap C(\Dsf)$. 
\end{assumption}
Under these assumptions we can prove, using similar arguments as in Lemma~\ref{lem:aprior_u}, that \eqref{eq:state_weak_chi_extreme} admits a unique solution and that there is a constant $C>0$ (depending on $\Omega$), such that 
\ben\label{eq:bound_u_extreme}
      \|u_\Omega\|_{L_\infty(\Dsf\setminus \overbar \Omega)} +  \|u_\Omega\|_{H^1_0(\Dsf\setminus \overbar \Omega)}\le C\|f_2\|_{L_r(\Dsf\setminus\overbar \Omega)}
\een
for $r>d/2$ close enough to $d/2$.  
Moreover, for every $z\in \Dsf\setminus \overbar \Omega$, we find $\delta>0$, such that $u_{\Omega}\in H^3(B_\delta(z))$.

\subsection{The parametrised Lagrangian}
From now on we fix:
\begin{itemize}\setlength\itemsep{0.3em}
  \item a simplify connected Lipschitz domain $\omega\subset \VR^d$ with $0\in \omega$,
  \item  a point $z\in \Dsf$,
  \item the perturbation $\Omega_\eps := \omega_\eps:=\omega_\eps(z)$, where $\omega_\eps(z) := z+\eps \omega$ and $\eps\in[0,\tau]$, $\tau >0$.
\end{itemize}
Let $X=Y=H^1_0(\Dsf)$ and introduce the Lagrangian $ G:[0,\tau]\times X\times Y \to \VR$ associated with the perturbation $\Omega_\eps$ by 
\ben\label{eq:lagrange_scale_eps_extreme}
G(\eps,\fu,\fp) := \int_{\Dsf\setminus\overbar \omega_\eps} \fu^2\;dx + \int_{\Dsf\setminus\overbar \omega_\eps} \beta_2 \nabla \fu \cdot \nabla\fp +  \varrho_2(\fu)\fp \;dx - \int_{\Dsf\setminus\overbar \omega_\eps} f_2\fp \;dx.
\een
We will verify that Hypotheses~(H0)-(H4) are satisfied with $\ell(\eps) = |\omega_\eps|$. 

\subsection{Analysis of the perturbed state equation}
The \emph{perturbed state equation} reads: find $u_\eps \in H^1_0(\Dsf)$ such that $\partial_p G(\eps, u_\eps, 0)(\varphi)=0$ for all $ \varphi \in H^1_0(\Dsf),$
or equivalently $u_\eps \in H^1_0(\Dsf)$ satisfies,
\ben\label{eq:PDE_constraint_topo_extreme}
\int_{\Dsf\setminus\overbar\omega_\eps} \beta_2 \nabla u_\eps\cdot \nabla\varphi + \varrho_2(u_\eps)\varphi \;dx = \int_{\Dsf\setminus\overbar\omega_\eps} f_2\varphi \;dx \quad \text{ for all } \varphi \in H^1_0(\Dsf).
\een
Henceforth we write $u:=u_0$ to simplify notation. Since \eqref{eq:state_weak_chi_extreme} admits a unique solution $\bar u_\eps$ for $\Omega = \omega_\eps$, which can be extended to $H^1_0(\Dsf)$,  \eqref{eq:PDE_constraint_topo_extreme} admits a solution, too, whose restriction to $\Dsf\setminus \overbar \Omega$ is unique. This means that 
 \ben
 E(\eps) = \{u\in H^1_0(\Dsf):\;  u = \bar u_\eps \text{ a.e. on } \Dsf\setminus\overbar \omega_\eps \},
 \een
 where $\bar u_\eps$ is the unique solution to \eqref{eq:state_weak_chi_extreme}. It also follows that $X(\eps)=E(\eps)$ since the Lagrangian only depends on 
 the restriction of functions to $D\setminus \overbar\omega_\eps$. Note that the set $X(0)$ is a singleton.  Moreover for all $\eps \in [0,\tau]$, 
 \ben
 g(\eps) = \inf_{u\in E(\eps)} G(\eps,u,0) = \int_{\Dsf\setminus \overbar\omega_\eps} \bar u^2_\eps \;dx. 
 \een
 This shows that Hypothesis (H0) and, in view of Assumption~\ref{ass:varrho_monotone_extreme}, also Hypothesis (H1) is satisfied. 

 The next step deviates from the transmission problem case (of Section~\ref{sec:4}). We construct functions $u_\eps \in X(\eps)$ and 
 $\fp_\eps \in Y(\eps,u_0, u_\eps)$ that satisfy Hypothesis~(H4). For this purpose we associate with $u_\eps \in H^1_{\partial \Dsf}(\Dsf\setminus \overbar \omega_\eps)$ a function $u_\eps^+\in H^1(\omega_\eps)$ defined as the unique weak solution to the Dirichlet problem
\ben\label{eq:extension}
\begin{split}
  -\Div(\beta_2 \nabla u_\eps^+)  +\varrho_2(u) & =f_2 \quad \text{ in } \omega_\eps \\
  u_\eps^+ & = u_\eps \quad \text{ on } \partial \omega_\eps. 
\end{split}
\een
With this function we can extend $u_\eps$ to a function $u_\eps\in H^1_0(\Dsf)$ by setting 
\ben
u_\eps := \left\{
  \begin{array}{cl}
    u_\eps^+ & \text{ in } \omega_\eps\\
    u_\eps   & \text{ in } \Dsf\setminus \overbar \omega_\eps
\end{array}\right..
\een
 Now we prove the following analogue of Lemma~\ref{lem:u_ueps}. 
\begin{lemma}\label{lem:u_ueps_extreme}
	There is a constant $C>0$, such that for all small $\eps>0$, 
	\ben\label{eq:est_u_eps_D_extreme}
	\|u_\eps - u\|_{H^1(\Dsf)} \le C\eps^{d/2}. 
	\een
\end{lemma}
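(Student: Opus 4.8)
The plan is to follow the scheme of Lemma~\ref{lem:u_ueps}, testing the state equations with $v:=u_\eps-u\in H^1_0(\Dsf)$, but with one extra ingredient to control the part of $v$ sitting inside the inclusion $\omega_\eps$. Here $u=u_0$ solves \eqref{eq:PDE_constraint_topo_extreme} with $\eps=0$, i.e.\ the equation on all of $\Dsf$, on $\Dsf\setminus\overbar\omega_\eps$ the function $u_\eps$ solves \eqref{eq:PDE_constraint_topo_extreme}, and on $\omega_\eps$ it coincides with the Dirichlet extension $u_\eps^+$ from \eqref{eq:extension}. The first observation is that the restriction of $u$ to $\omega_\eps$ also satisfies $-\Div(\beta_2\nabla u)=f_2-\varrho_2(u)$ there, which is exactly the equation solved by $u_\eps^+$; hence $v|_{\omega_\eps}=u_\eps^+-u$ is $\beta_2$-harmonic on $\omega_\eps$ with Dirichlet datum equal to the trace of $(u_\eps-u)|_{\Dsf\setminus\overbar\omega_\eps}$ on $\partial\omega_\eps$. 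Testing the two equations with $v$ and subtracting, the zeroth-order term over $\Dsf\setminus\overbar\omega_\eps$ has a sign by the monotonicity of $\varrho_2$ (where $v=u_\eps-u$), so one is left with
\[
\beta_m\,\|\nabla v\|_{L_2(\Dsf\setminus\overbar\omega_\eps)}^2\;\le\;\Big|\int_{\omega_\eps}\beta_2\nabla u\cdot\nabla v\,dx\Big|+\Big|\int_{\omega_\eps}(f_2-\varrho_2(u))\,v\,dx\Big|.
\]

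The heart of the matter is the interior estimate $\|\nabla v\|_{L_2(\omega_\eps)}\le C\,\|\nabla v\|_{L_2(\Dsf\setminus\overbar\omega_\eps)}$ with $C$ independent of $\eps$, and I expect this to be the main technical obstacle. Since $v|_{\omega_\eps}$ is $\beta_2$-harmonic with prescribed trace, writing $v|_{\omega_\eps}=\hat v+w_0$ with $w_0\in H^1_0(\omega_\eps)$ and testing the equation with $w_0$ gives $\|\nabla v\|_{L_2(\omega_\eps)}\le(1+\beta_M/\beta_m)\,\|\nabla\hat v\|_{L_2(\omega_\eps)}$ for \emph{any} $\hat v\in H^1(\omega_\eps)$ with the same trace on $\partial\omega_\eps$, so it suffices to exhibit one such competitor with small gradient. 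The plan is to fix, once and for all, a Lipschitz collar $A^+\subset\VR^d\setminus\overbar\omega$ of $\partial\omega$ and a bounded linear extension operator $E:H^1(A^+)\to H^1(\omega)$ preserving traces on $\partial\omega$ (available since $\partial\omega$ is Lipschitz); then, with $V:=(u_\eps-u)|_{\Dsf\setminus\overbar\omega_\eps}$, $\hat V:=V\circ T_\eps$ and $c:=\fint_{A^+}\hat V\,dx$, take $\hat v(x):=c+\big(E(\hat V-c)\big)(T_\eps^{-1}(x))$ on $\omega_\eps$. A change of variables together with the Poincar\'e--Wirtinger inequality on the \emph{fixed} domain $A^+$ yields
\[
\|\nabla\hat v\|_{L_2(\omega_\eps)}^2=\eps^{d-2}\|\nabla E(\hat V-c)\|_{L_2(\omega)}^2\le C\eps^{d-2}\|\hat V-c\|_{H^1(A^+)}^2\le C\eps^{d-2}\|\nabla\hat V\|_{L_2(A^+)}^2=C\,\|\nabla V\|_{L_2(T_\eps(A^+))}^2,
\]
and $T_\eps(A^+)\subset\Dsf\setminus\overbar\omega_\eps$ for small $\eps$. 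The powers of $\eps$ cancel exactly; subtracting the mean $c$ (rather than extending $\hat V$ directly) is precisely what makes the bound uniform.

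Finally I would close the estimate as follows. From the interior estimate and $v\in H^1_0(\Dsf)$ one gets $\|\nabla v\|_{L_2(\Dsf)}\le C\,\|\nabla v\|_{L_2(\Dsf\setminus\overbar\omega_\eps)}=:C t$ and, by Poincar\'e's inequality on $\Dsf$, $\|v\|_{L_2(\Dsf)}\le C t$; in particular $\|v\|_{L_2(\omega_\eps)}+\|\nabla v\|_{L_2(\omega_\eps)}\le C t$. Since $u\in C^1(\overbar B_\delta(z))$ and $f_2-\varrho_2(u)\in C(\overbar B_\delta(z))$ for some small $\delta>0$ while $\overbar\omega_\eps\subset B_\delta(z)$ for all small $\eps$, H\"older's inequality and $|\omega_\eps|=\eps^d|\omega|$ bound the right-hand side of the first display by $C\eps^{d/2}\big(\|\nabla v\|_{L_2(\omega_\eps)}+\|v\|_{L_2(\omega_\eps)}\big)\le C\eps^{d/2}t$; hence $\beta_m t^2\le C\eps^{d/2}t$, so $t\le C\eps^{d/2}$, and therefore $\|u_\eps-u\|_{H^1(\Dsf)}\le C\,\|\nabla v\|_{L_2(\Dsf)}\le C\eps^{d/2}$, which is \eqref{eq:est_u_eps_D_extreme}. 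The only properties used are uniform ellipticity of $\beta_2$, monotonicity of $\varrho_2$ with $\varrho_2(0)=0$, and local boundedness of $f_2$ near $z$, so the argument applies under either alternative of Assumption~\ref{ass:varrho_monotone_extreme}.
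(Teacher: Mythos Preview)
Your proof is correct and in fact more economical than the paper's. For the interior estimate $\|\nabla v\|_{L_2(\omega_\eps)}\le C\|\nabla v\|_{L_2(\Dsf\setminus\overbar\omega_\eps)}$ the two arguments are essentially the same idea---rescale to the fixed inclusion $\omega$ and work modulo constants---the paper phrasing it via elliptic regularity plus the trace theorem on $\partial\omega$ and the equivalence of the quotient norm on $H^1/\VR$ with the seminorm, while you use a competitor, a fixed extension operator, and Poincar\'e--Wirtinger on the collar. The substantive difference is in closing the estimate on $\Dsf\setminus\overbar\omega_\eps$. The paper treats the two alternatives of Assumption~\ref{ass:varrho_monotone_extreme} separately: under~(a) it rescales the difference equation and argues that $K_\eps:=(u_\eps-u)\circ T_\eps\rightharpoonup 0$ in $\dot{BL}(\VR^d\setminus\overbar\omega)$, inferring the $\eps^{d/2}$ rate from boundedness of the sequence (this is where boundedness of $\varrho_2$ is used); under~(b) it uses $\varrho_2'>\lambda$ to obtain full $H^1$-coercivity on the perforated domain. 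Your argument sidesteps both devices by exploiting that the particular extension \eqref{eq:extension} places $v$ in $H^1_0(\Dsf)$, so Poincar\'e's inequality on the \emph{fixed} domain $\Dsf$ together with the interior gradient estimate controls $\|v\|_{H^1(\Dsf)}$ by $t=\|\nabla v\|_{L_2(\Dsf\setminus\overbar\omega_\eps)}$; monotonicity of $\varrho_2$ alone then closes the inequality $\beta_m t^2\le C\eps^{d/2}t$. The paper's route under~(a) has the side benefit of identifying the weak limit of $K_\eps$, but for the present lemma your unified argument is cleaner and, as you observe, uses neither the boundedness of $\varrho_2$ nor the strict lower bound on $\varrho_2'$.
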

\begin{proof}
We first establish an estimate for $u_\eps-u$ on $\omega_\eps$. For this purpose we fix a bounded domain $S\subset \Dsf$ containing $\omega$.  We note that the difference $e_\eps(x) := u_\eps(T_\eps(x)) - u(T_\eps(x))$ satisfies $-\Div(\beta_2\circ T_\eps\nabla e_\eps)=0$ on $\omega$ and 
$e_\eps = u_\eps(T_\eps(x)) - u(T_\eps(x))$ on $\partial \omega$. Hence by standard elliptic regularity and the trace theorem we find  
\ben\label{eq:e_lambda}
\| e_\eps+\lambda\|_{H^1(\omega)} \le  c\|e_\eps+\lambda\|_{H^{1/2}(\partial \omega)} \le c \|e_\eps + \lambda\|_{H^1(S\setminus \overbar \omega)}
\een
for all $\lambda\in \VR$. Since the quotient norms on the spaces $H^1(\omega)/\VR$ and $H^1(S\setminus \overbar\omega)$ are equivalent to the seminorms $|v|_{H^1(\omega)}:=\|\nabla v\|_{L_2(\omega)^d}$ and $|v|_{H^1(S\setminus\overbar\omega)}:=\|\nabla v\|_{L_2(S\setminus \overbar\omega)^d}$, respectively,  we conclude $\|\nabla e_\eps\|_{L_2(\omega)^d} \le c\|\nabla e_\eps\|_{L_2(S\setminus \omega)^d}$. Therefore estimating the right hand side and changing variables shows 
\ben\label{eq:est_in_out}
\|\nabla(u_\eps - u)\|_{L_2(\omega_\eps)^d} \le c \|\nabla(u_\eps-u)\|_{L_2(\Dsf\setminus \overbar \omega_\eps)^d}.
\een
A fortiori using \eqref{eq:est_in_out} and a similar argument shows that \eqref{eq:e_lambda} implies
\ben\label{eq:est_in_out2}
\|u_\eps - u\|_{L_2(\omega_\eps)} \le c(\eps \|\nabla(u_\eps-u)\|_{L_2(\Dsf\setminus \overbar \omega_\eps)^d} + \|u_\eps - u\|_{L_2(\Dsf\setminus \overbar \omega_\eps)}).
\een
This finishes the first step of the proof. We now establish an estimate for the right hand side of \eqref{eq:est_in_out}. Following the steps of Lemma~\ref{lem:u_ueps} we find 
  \ben\label{eq:diff_u_ueps_extreme}
	\begin{split}
	  \int_{\Dsf\setminus\overbar\omega_\eps} & \beta_2 \nabla (u_\eps-u)\cdot \nabla\varphi \;dx + \int_{\Dsf\setminus\overbar \omega_\eps} (\varrho_2(u_\eps) - \varrho_2(u)) \varphi\; dx  \\
	= &  \int_{\omega_\eps} \beta_2\nabla u\cdot \nabla \varphi\;dx + \int_{\omega_\eps} \varrho_2(u) \varphi -  f_2 \varphi\; dx
        \end{split}
	\een
	for all $\varphi \in H^1_0(\Dsf)$. Let us first assume that Assumption~\ref{ass:varrho_monotone_extreme}, (a) holds.  
	Fix $\bar \eps >0$ and let $0<\eps <\bar \eps$. Changing variables in  \eqref{eq:diff_u_ueps_extreme} yields (recalling that we denote by $\tilde u_\eps$ the extension of $u_\eps$ to $\VR^d$) 
	\ben\label{eq:diff_u_ueps_scale1_extreme}
	\begin{split}
	  \int_{\VR^d\setminus\overbar\omega} & \beta_2 \nabla K_\eps\cdot \nabla\varphi \;dx = -\underbrace{ \eps^2 \int_{\VR^d\setminus\overbar \omega} (\varrho_2(\tilde u_\eps(T_\eps)) - \varrho_2(\tilde u(T_\eps))) \varphi\; dx}_{\to 0, \text{ since } \varrho_2 \text{ is bounded}}  \\
	  + &  \underbrace{\eps\int_{\omega} \beta_2\nabla u(T_\eps)\cdot \nabla \varphi\;dx + \eps^2 \int_{\omega} \varrho_2(u(T_\eps) ) \varphi -  f_2(T_\eps) \varphi\; dx}_{\to 0, \text{ since } u\in C^1(\overbar B_\delta(z)), f_2\in C(\Dsf)\text{ and } \varrho_2\in C(\VR)},
        \end{split}
	\een
	for all $\varphi\in H^1_{\partial \Dsf}(\Dsf_{\bar\eps}\setminus\overbar \omega)$, where $K_\eps := (u_\eps-u)\circ T_\eps$. Since $\bar \eps >0$ is arbitrary, this shows that  
	$K_\eps \rightharpoonup 0$ weakly in $\dot{\text{BL}}(\VR^d\setminus \overbar \omega)$. But this means that $K_\eps$ must be bounded in $\dot{\text{BL}}(\VR^d\setminus \overbar \omega)$ and hence we find $C>0$, such that $\|\nabla K_\eps\|_{L_2(\VR^d\setminus \overbar \omega)^d}\le C$ or equivalently after changing variables $\|\nabla(u_\eps - u)\|_{L_2(\Dsf\setminus \overbar \omega_\eps)}\le C\eps^{d/2}$. Combining this estimate with \eqref{eq:est_in_out} and using Poincar\'e's inequality gives \eqref{eq:est_u_eps_D_extreme}. 
	
	Let us now assume that Assumption~\ref{ass:varrho_monotone_extreme}, (b) is satisfied. Testing \eqref{eq:diff_u_ueps_extreme} with $\varphi=u_\eps-u$, using $\varrho_2'>\lambda$ and applying H\"older's inequality yield
	\benn
	C\|u_\eps-u\|_{H^1(\Dsf\setminus\overbar\omega_\eps)}^2 \le \sqrt{|\omega_\eps|}(\|\nabla u\|_{C(B_\delta(z))^d}\|\nabla (u_\eps-u)\|_{L_2(\omega_\eps)^d} + \|\varrho(u)-f_2\|_{C(\overbar B_\delta(z))}\|u_\eps-u\|_{L_2(\omega_\eps)}).
	\eenn
	Using \eqref{eq:est_in_out} and \eqref{eq:est_in_out2} to estimate the right hand side and noting $|\omega_\eps|=|\omega|\eps^d$, we infer $\|u_\eps-u\|_{H^1(\Dsf\setminus\overbar\omega_\eps)} \le C\eps^{d/2}$. Again combining this estimate with \eqref{eq:est_in_out} yields \eqref{eq:est_u_eps_D_extreme}.

\end{proof}

\subsection{Analysis of the averaged adjoint equation}
We introduce for $\eps\in [0,\tau]$  the (not necessarily symmetric) bilinear form $b_\eps:H^1_0(\Dsf) \times H^1_0(\Dsf)\to \VR$ by 
\ben
b_\eps(\psi, \varphi) := \int_{\Dsf\setminus \overbar \omega_\eps} \beta_2 \nabla \psi\cdot \nabla \varphi + \left(\int_0^1 \varrho_2'(su_\eps+(1-s)u)\;ds\right) \varphi \psi \;dx.
\een
Then the averaged adjoint equation \eqref{eq:aa_equation} for the Lagrangian $G$ given by \eqref{eq:lagrange_scale_eps_extreme}  reads: for $(u_0,u_\eps)\in X(0)\times X(\eps)$ find $q_\eps\in H^1_0(\Dsf)$, such that
\ben\label{eq:perturbed_adjoint_topo_extreme}
b_\eps(\psi,\fp^\eps)= - \int_{\Dsf\setminus \overbar \omega_\eps} (u+u_\eps) \psi \;dx
\een
for all $\psi \in H^1_0(\Dsf)$. In view of Assumption~\ref{ass:varrho_monotone} we have $\varrho_2'\ge 0$ and $\beta_2\ge \beta_m I$ and thus $b_\eps$ satisfies,
\ben
b_\eps(\psi,\psi) \ge \beta_m \|\nabla \psi\|_{L_2(\Dsf\setminus \overbar \omega)^d}^2  
\een
for all $\psi\in H^1_0(\Dsf)$ and $\eps \in [0,\tau]$. 
As for the state equation, we use the notation $q:=q^0$. 
\begin{lemma}
  \begin{itemize}
    \item[(i)] For each $\eps\in[0,\tau]$ equation \eqref{eq:perturbed_adjoint_topo_extreme} admits a solution whose restriction to $\Dsf\setminus \overbar \omega_\eps$ is unique. 
  \item[(ii)] For every $z\in \Dsf\setminus \overbar \Omega$ we find
    a number $\delta >0$, such that $q\in H^3(B_\delta(z))\subset C^1(\overbar B_\delta(z))$ for $d\in  \{2,3\}$. 
  \end{itemize}
\end{lemma}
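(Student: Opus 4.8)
\noindent\emph{Proof plan.} The plan is to deduce (i) from the Lax--Milgram lemma applied on a suitable closed subspace, exactly as for the perturbed state equation \eqref{eq:PDE_constraint_topo_extreme}, and to obtain (ii) by the same local elliptic bootstrap already carried out in Lemma~\ref{lem:aprior_u}(ii).

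For (i) the key observation is that $b_\eps$ is \emph{not} coercive on all of $H^1_0(\Dsf)$: both $b_\eps(\psi,\varphi)$ and the right-hand side of \eqref{eq:perturbed_adjoint_topo_extreme} depend only on the restrictions of $\psi$ and $\varphi$ to $\Dsf\setminus\overbar\omega_\eps$, so every $\psi\in H^1_0(\Dsf)$ supported in $\omega_\eps$ lies in its kernel. I would therefore pass to $V_\eps:=H^1_{\partial\Dsf}(\Dsf\setminus\overbar\omega_\eps)$, a closed subspace of $H^1(\Dsf\setminus\overbar\omega_\eps)$, onto which the restriction map from $H^1_0(\Dsf)$ is surjective (extend, e.g., by solving an auxiliary Dirichlet problem on $\omega_\eps$ as in the definition of $u_\eps^+$ in \eqref{eq:extension}). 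On $V_\eps\times V_\eps$ the form $b_\eps$ is continuous, and it is coercive because $\beta_2\ge\beta_m I$ together with the Friedrichs inequality on $\Dsf\setminus\overbar\omega_\eps$ for functions vanishing on the part $\partial\Dsf$ of the boundary yields $b_\eps(\psi,\psi)\ge c\,\|\psi\|_{H^1(\Dsf\setminus\overbar\omega_\eps)}^2$. Lax--Milgram then gives a unique $\bar q_\eps\in V_\eps$, and any $H^1_0(\Dsf)$-extension $q_\eps$ of $\bar q_\eps$ solves \eqref{eq:perturbed_adjoint_topo_extreme} for \emph{all} test functions in $H^1_0(\Dsf)$, since both sides only see the restriction to $\Dsf\setminus\overbar\omega_\eps$; its restriction to $\Dsf\setminus\overbar\omega_\eps$ equals $\bar q_\eps$ and is hence unique.

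For (ii), since at $\eps=0$ the inclusion $\omega_0$ degenerates to a point, $q=q^0$ satisfies $-\Div(\beta_2\nabla q)+\varrho_2'(u)q=-2u$ weakly on all of $\Dsf$. I would fix $z$, choose $\delta>0$ with $B_\delta(z)\Subset\Dsf$, and bootstrap: $\beta_2$ is $C^1$ near $z$, $u\in H^3(B_\delta(z))\hookrightarrow C^1(\overbar B_\delta(z))$ by the state regularity recorded in \eqref{eq:bound_u_extreme} and the lines following it, and $\varrho_2'$ is continuous and bounded on the range of $u$; hence the right-hand side $-2u-\varrho_2'(u)q$ is in $L_2(B_\delta(z))$, so interior elliptic regularity gives $q\in H^2_{\mathrm{loc}}$ and therefore, for $d\le 3$, $q\in C^0$ locally. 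Treating the product $\varrho_2'(u)q$ exactly as $\tilde f=f_2-\varrho_2(u)$ was treated in Lemma~\ref{lem:aprior_u}(ii) shows that the right-hand side is in $H^1$ locally, so a second application of interior regularity gives $q\in H^3(B_\delta(z))$, and $H^3(B_\delta(z))\hookrightarrow C^1(\overbar B_\delta(z))$ for $d\in\{2,3\}$.

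The step I expect to require the most care, since it has no counterpart in Section~\ref{sec:4}, is the coercivity of $b_\eps$ on $V_\eps$: one must be certain that the Friedrichs inequality is available on the possibly irregular set $\Dsf\setminus\overbar\omega_\eps$ for functions that vanish only on the $\partial\Dsf$-portion of its boundary. This is classical for a bounded open set whose boundary carries a Dirichlet part of positive capacity, so it is not a genuine obstruction, but it is the point that distinguishes this lemma from its transmission-problem analogue and that should be stated explicitly.
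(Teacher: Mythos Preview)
Your proposal is correct. The paper does not spell out a proof for this lemma at all; it is stated without proof, relying on the reader to carry over the argument from the transmission-problem analogue in Section~\ref{sec:4} (where (i) is a one-line application of Lax--Milgram and (ii) is referred back to Lemma~\ref{lem:aprior_u}(ii)). Your write-up is precisely the expected completion of those omitted details: you correctly identify that, unlike in Section~\ref{sec:4}, $b_\eps$ is only coercive modulo functions supported in $\omega_\eps$, pass to $V_\eps=H^1_{\partial\Dsf}(\Dsf\setminus\overbar\omega_\eps)$, invoke Friedrichs there, and extend back; and your bootstrap for (ii) is the same as the paper's in Lemma~\ref{lem:aprior_u}(ii). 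The extra caution you flag about Friedrichs on $\Dsf\setminus\overbar\omega_\eps$ is warranted but, as you note, routine since the Dirichlet portion $\partial\Dsf$ is fixed and of positive measure.
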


The previous lemma shows that $Y(\eps,u,u_\eps)=\{q\in H^1_0(\Dsf):\; q=q_\eps \text{ a.e. on } \Dsf\setminus \overbar \omega_\eps  \}$ and thus Hypothesis~(H2) is satisfied. In the same way as done in \eqref{eq:extension} we extend the restriction $\fp_\eps|_{\Dsf\setminus \overbar \omega_\eps}$ (which is unique) to a function $\fp_\eps \in H^1_0(\Dsf)$ by solving the following Dirichlet problem: find $\fp^+_\eps\in H^1(\omega_\eps)$, such that
\ben
\begin{split}
  -\Div(\beta_2^\top \nabla \fp_\eps^+) + \varrho_2'(u)\fp &= -2u \quad \text{ in } \omega_\eps\\
  \fp_\eps^+& = \fp_\eps \quad \text{ on } \partial \omega_\eps.
\end{split}
\een
 With this function we define again 
\ben
\fp_\eps := \left\{
  \begin{array}{cl}
    \fp_\eps^+ & \text{ in } \omega_\eps\\
    \fp_\eps   & \text{ in } \Dsf\setminus \overbar \omega_\eps
\end{array}\right..
\een
It is clear that $\fp_\eps\in Y(\eps,u,u_\eps)$.  We proceed with a H\"older-type estimate for the extension $\eps \mapsto q_\eps$.

\begin{lemma}\label{lem:dif_p_peps_extreme}
	There is a constant $C>0$, such that for all small $\eps >0$,
	\ben\label{eq:p_pesp}
	\|q_\eps-q\|_{H^1(\Dsf)}\le C(\|u_\eps-u\|_{L_2(\Dsf)} + \eps^{d/2}).
	\een
\end{lemma}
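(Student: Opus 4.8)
The plan is to mirror the proof of Lemma~\ref{lem:dif_p_peps} on the exterior region $\Dsf\setminus\overbar\omega_\eps$, to use the particular Dirichlet extension that defines $q_\eps$ inside $\omega_\eps$ to control $q_\eps-q$ there as in the proof of Lemma~\ref{lem:u_ueps_extreme}, and then to glue the two estimates together with the Poincar\'e inequality. First I would subtract the averaged adjoint equations \eqref{eq:perturbed_adjoint_topo_extreme} for $\eps$ and for $0$ (for $\eps=0$ the set $\overbar\omega_0$ is a single point, so $b_0$ is the form on all of $\Dsf$ with right-hand side $-\int_\Dsf 2u\psi\,dx$). This yields, for all $\psi\in H^1_0(\Dsf)$, an identity of the form
\[
b_\eps(\psi,q_\eps-q)=-\int_{\Dsf\setminus\overbar\omega_\eps}(u_\eps-u)\psi\,dx+\mathrm{L}_\eps(\psi)+\mathrm{R}_\eps(\psi),
\]
where $\mathrm{L}_\eps(\psi)$ collects the terms supported in $\omega_\eps$ (the contributions $-\int_{\omega_\eps}\beta_2\nabla q\cdot\nabla\psi\,dx$ and $-\int_{\omega_\eps}(\int_0^1\varrho_2'(su_\eps+(1-s)u)\,ds)q\psi\,dx$ from $b_\eps-b_0$, plus $\int_{\omega_\eps}2u\psi\,dx$ from the mismatch of the quadratic cost term), each bounded by $C\eps^{d/2}\|\psi\|_{H^1(\Dsf)}$ since $|\omega_\eps|=|\omega|\eps^d$ and $u,q\in C^1(\overbar B_\delta(z))$ with $\overbar\omega_\eps\subset B_\delta(z)$ for small $\eps$, and where $\mathrm{R}_\eps(\psi)=\int_{\Dsf\setminus\overbar\omega_\eps}(\int_0^1[\varrho_2'(su_\eps+(1-s)u)-\varrho_2'(u)]\,ds)q\psi\,dx$ is controlled in terms of $\|u_\eps-u\|_{L_2(\Dsf)}$ using $q\in L_\infty(\Dsf)$ and the continuity of $\varrho_2'$ on the bounded range of the (uniformly $L_\infty$-bounded) functions $u_\eps,u$. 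Testing with $\psi=q_\eps-q\in H^1_0(\Dsf)$ and using the coercivity $b_\eps(\psi,\psi)\ge\beta_m\|\nabla\psi\|_{L_2(\Dsf\setminus\overbar\omega_\eps)^d}^2$ then gives
\[
\|\nabla(q_\eps-q)\|_{L_2(\Dsf\setminus\overbar\omega_\eps)^d}^2\le C\big(\|u_\eps-u\|_{L_2(\Dsf)}+\eps^{d/2}\big)\,\|q_\eps-q\|_{H^1(\Dsf)}.
\]

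Next I would estimate $q_\eps-q$ on $\omega_\eps$. There $q_\eps$ equals $q_\eps^+$, which by construction solves $-\Div(\beta_2^\top\nabla q_\eps^+)+\varrho_2'(u)q_\eps^+=-2u$ with $q_\eps^+=q_\eps$ on $\partial\omega_\eps$, while the restriction of $q$ to $\omega_\eps$ is a strong solution of the same equation (using $q\in H^3(B_\delta(z))$); hence $q_\eps^+-q$ solves the homogeneous problem $-\Div(\beta_2^\top\nabla(q_\eps^+-q))+\varrho_2'(u)(q_\eps^+-q)=0$ in $\omega_\eps$ with boundary trace $q_\eps-q$. Repeating the change-of-variables-and-trace argument of Lemma~\ref{lem:u_ueps_extreme} — in the rescaled variable the operator becomes $-\Div((\beta_2^\top\circ T_\eps)\nabla\cdot)+\eps^2(\varrho_2'(u)\circ T_\eps)\cdot$, uniformly elliptic with uniformly bounded nonnegative zeroth-order coefficient, so elliptic regularity on $\omega$ is $\eps$-uniform, followed by the trace theorem on a fixed domain $S$ with $\omega\Subset S\subset\Dsf$ and a change of variables back — yields, exactly as in \eqref{eq:est_in_out}--\eqref{eq:est_in_out2},
\[
\|\nabla(q_\eps-q)\|_{L_2(\omega_\eps)^d}\le c\,\|\nabla(q_\eps-q)\|_{L_2(\Dsf\setminus\overbar\omega_\eps)^d},
\]
\[
\|q_\eps-q\|_{L_2(\omega_\eps)}\le c\big(\eps\|\nabla(q_\eps-q)\|_{L_2(\Dsf\setminus\overbar\omega_\eps)^d}+\|q_\eps-q\|_{L_2(\Dsf\setminus\overbar\omega_\eps)}\big).
\]

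Finally I would combine the pieces. By the Poincar\'e inequality on $H^1_0(\Dsf)$ together with the first interior bound (splitting $\|\nabla(q_\eps-q)\|_{L_2(\Dsf)^d}$ over $\Dsf\setminus\overbar\omega_\eps$ and $\omega_\eps$) one gets $\|q_\eps-q\|_{H^1(\Dsf)}\le C\|\nabla(q_\eps-q)\|_{L_2(\Dsf\setminus\overbar\omega_\eps)^d}=:CX$. Substituting into the exterior estimate gives $X^2\le C(\|u_\eps-u\|_{L_2(\Dsf)}+\eps^{d/2})\,CX$, hence $X\le C(\|u_\eps-u\|_{L_2(\Dsf)}+\eps^{d/2})$, and therefore $\|q_\eps-q\|_{H^1(\Dsf)}\le C(\|u_\eps-u\|_{L_2(\Dsf)}+\eps^{d/2})$, which is the claimed bound \eqref{eq:p_pesp}. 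The hard part is the interior estimate: since $b_\eps$ is coercive only in the gradient over $\Dsf\setminus\overbar\omega_\eps$, the Poincar\'e inequality cannot close the loop until $q_\eps-q$ on $\omega_\eps$ has been bounded by its exterior norm, and this is precisely where the particular Dirichlet extension defining $q_\eps$ and an $\eps$-uniform elliptic-regularity-plus-trace estimate (as in Lemma~\ref{lem:u_ueps_extreme}) are indispensable; a secondary, minor point is the coefficient-variation term $\mathrm{R}_\eps$ on the exterior region, which is absorbed by the continuity of $\varrho_2'$ on the uniform $L_\infty$-range of the states.
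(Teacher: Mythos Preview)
Your proposal is correct and is precisely the detailed version of what the paper intends: the paper's own proof of Lemma~\ref{lem:dif_p_peps_extreme} is just the one-liner ``the proof is the same as the one of Lemma~\ref{lem:dif_p_peps} and therefore omitted.'' Your expansion correctly supplies the exterior energy estimate (mirroring that lemma) together with the interior Dirichlet-extension bound (mirroring Lemma~\ref{lem:u_ueps_extreme}), which is exactly the extra ingredient needed here because $b_\eps$ is coercive only on $\Dsf\setminus\overbar\omega_\eps$.
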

\begin{proof}
  The proof is the same as the one of Lemma~\ref{lem:dif_p_peps} and therefore omitted. 
 \end{proof}
It is readily checked that Hypothesis~(H3) is satisfied, too.
\begin{lemma}
We have 
  \ben\label{eq:hypo_H2_extreme}
	  \begin{split}
	    \lim_{\eps \searrow 0}\frac{G(\eps, u,q) - G(0,u,q)}{\ell(\eps)} = &  (-\beta_2 \nabla u\cdot \nabla q   - \varrho_2(u) q   + f_2\fp)(z).
	\end{split}
	\een
\end{lemma}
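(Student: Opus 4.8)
The plan is to repeat the proof of Lemma~\ref{lem:hypo_H2} almost verbatim, via the change of variables $T_\eps(x)=\eps x+z$; this is the most routine of the verifications of (H0)--(H4) for the extremal problem. The only structural difference with the transmission case is that in \eqref{eq:lagrange_scale_eps_extreme} \emph{every} integral is taken over the $\eps$-dependent domain $\Dsf\setminus\overbar\omega_\eps$, rather than only the coefficients $\beta_\eps,\varrho_\eps,f_\eps$ being $\eps$-dependent. Since $0\in\omega$, the degenerate inclusion $\omega_0=\{z\}$ is a Lebesgue null set, hence $G(0,u,q)$ equals the integral of the same ($\eps$-independent) integrand over all of $\Dsf$; subtracting therefore localises the difference $G(\eps,u,q)-G(0,u,q)$ to an integral over the shrinking inclusion $\omega_\eps$ of that integrand, evaluated at the fixed pair $(u,q)=(u_0,q_0)$.

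Second, I would apply the change of variables $x=T_\eps(y)$, so that $\int_{\omega_\eps}(\,\cdot\,)\,dx=\eps^d\int_{\omega}(\,\cdot\,)\circ T_\eps\,dy$, and divide by $\ell(\eps)=|\omega_\eps|=|\omega|\,\eps^d$; this rewrites the quotient as the average over the \emph{fixed} bounded set $\omega$ of the integrand of \eqref{eq:lagrange_scale_eps_extreme} evaluated along $T_\eps$.

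Third, I would pass to the limit $\eps\searrow 0$. For all $\eps$ small enough one has $T_\eps(\omega)\subset\overbar B_\delta(z)$, a neighbourhood of $z$ on which $u$ and $q$ are of class $C^1$ by the interior regularity already recorded for the state equation \eqref{eq:PDE_constraint_topo_extreme} and for the averaged adjoint equation \eqref{eq:perturbed_adjoint_topo_extreme}; moreover $f_2\in C(\Dsf)$, $\varrho_2\in C^1(\VR)$, and $\beta_2$ is continuous (constant in case (a) of Assumption~\ref{ass:varrho_monotone_extreme}, $C^1$ in case (b)). Hence the integrand converges uniformly on the bounded set $\omega$ to its value at $z$ as $\eps\searrow 0$, and passing to the limit under the integral sign gives \eqref{eq:hypo_H2_extreme}. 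I expect no genuine obstacle here: the only two points of care are that the whole computation takes place at the \emph{frozen} pair $(u,q)=(u_0,q_0)$ --- so, unlike in Lemmas~\ref{lem:u_ueps_extreme} and \ref{lem:dif_p_peps_extreme}, no estimate on $u_\eps-u$ or $q_\eps-q$ enters --- and that the pointwise regularity of $u$ and $q$ near $z$ invoked here is precisely the one established earlier for the extremal problem (immediately after Assumption~\ref{ass:varrho_monotone_extreme}, and in the adjoint lemma preceding \eqref{eq:hypo_H2_extreme}).
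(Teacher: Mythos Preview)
Your approach is correct and is exactly what the paper does: it simply says to repeat the steps of Lemma~\ref{lem:hypo_H2} using $f_2\in C(\overbar B_\delta(z))$ and $u,\fp\in C^1(\overbar B_\delta(z))$, i.e.\ localise the difference to $-\int_{\omega_\eps}(\,\cdot\,)\,dx$, change variables via $T_\eps$, and pass to the limit by continuity.

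One small point worth flagging: if you carry out your computation faithfully, the integrand of \eqref{eq:lagrange_scale_eps_extreme} includes the cost term $u^2$, so the limit you obtain is $(-u^2-\beta_2\nabla u\cdot\nabla q-\varrho_2(u)q+f_2q)(z)$, with an additional $-u^2(z)$ compared to the formula stated in \eqref{eq:hypo_H2_extreme}. This extra term is genuinely produced by your (correct) argument and appears to be an omission in the paper's statement rather than a flaw in your proof; indeed, in the transmission case the cost integral $\int_\Dsf u^2\,dx$ is $\eps$-independent and cancels, whereas here $\int_{\Dsf\setminus\overbar\omega_\eps}u^2\,dx$ does depend on $\eps$.
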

\begin{proof}
 Since $f_2\in C(\overbar B_\delta(z))$ and $u,\fp\in C^1(\overbar B_\delta(z))$ for a small $\delta>0$, we can repeat the steps of the proof of Lemma~\ref{lem:hypo_H2}.
\end{proof}

\subsection{Variation of the averaged adjoint equation and its weak limit}
The next step is to consider the variation of the averaged adjoint state. The variation of the 
averaged adjoint variable, denoted $Q^\eps$,  is defined as in Definition~\ref{def:variation_q}.

The following is the analogue of Lemma~\ref{lem:bounds_scale_one} with the main difference that we have an 
additional equation which gives information of $Q$ inside the inclusion $\omega$. 
	\begin{lemma}\label{lem:bounds_scale_one_extreme}
		There is a constant $C>0$, such that for all small $\eps >0$, 
		\ben\label{eq:bound_p_eps_extreme}
		\int_{\VR^d} (\eps \fP^\eps)^2+ |\nabla \fP^\eps|^2 \; dx \le C.
		\een
	\end{lemma}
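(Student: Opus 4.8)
The plan is to run the proof of Lemma~\ref{lem:bounds_scale_one} essentially verbatim; the only new ingredient is that the two a priori estimates it relies on are now the extremal-case ones. First I would invoke Lemma~\ref{lem:u_ueps_extreme}, which gives $\|u_\eps-u\|_{H^1(\Dsf)}\le C\eps^{d/2}$, and substitute this into the estimate of Lemma~\ref{lem:dif_p_peps_extreme} to obtain a constant $C>0$ with $\|\fp_\eps-\fp\|_{H^1(\Dsf)}\le C\eps^{d/2}$ for all small $\eps>0$. Here it is important that both $\fp_\eps$ and $\fp$ are genuinely elements of $H^1_0(\Dsf)$: for $\fp_\eps$ this includes the values prescribed inside the inclusion $\omega_\eps$ by the auxiliary Dirichlet problem defining its extension, and for $\fp=\fp_0$ the inclusion $\omega_0=\{z\}$ is a null set so no extension is needed. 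Consequently the zero-extensions $\tilde\fp_\eps,\tilde\fp$ differ by an element of $H^1(\VR^d)$, and $\int_{\VR^d}(\tilde\fp_\eps-\tilde\fp)^2+|\nabla(\tilde\fp_\eps-\tilde\fp)|^2\;dx=\|\fp_\eps-\fp\|_{H^1(\Dsf)}^2\le C\eps^d$.

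Next I would apply the change of variables $y=T_\eps(x)=\eps x+z$, whose Jacobian determinant equals $\eps^d$. Since by Definition~\ref{def:variation_q} we have $\eps\fP^\eps=(\tilde\fp_\eps-\tilde\fp)\circ T_\eps$ and $\nabla\fP^\eps=\bigl(\nabla(\tilde\fp_\eps-\tilde\fp)\bigr)\circ T_\eps$, the factor $\eps^{-d}$ produced by the substitution cancels exactly the $\eps^d$ in the bound above, giving $\int_{\VR^d}(\eps\fP^\eps)^2+|\nabla\fP^\eps|^2\;dx=\eps^{-d}\int_{\VR^d}(\tilde\fp_\eps-\tilde\fp)^2+|\nabla(\tilde\fp_\eps-\tilde\fp)|^2\;dy\le C$, which is the claimed estimate \eqref{eq:bound_p_eps_extreme}.

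There is no real obstacle at the level of this lemma: everything specific to the void case has already been absorbed into Lemmas~\ref{lem:u_ueps_extreme} and \ref{lem:dif_p_peps_extreme}, whose proofs control the differences $u_\eps-u$ and $\fp_\eps-\fp$ inside the inclusion by their boundary traces and hence by the exterior $H^1$-norm, using elliptic regularity and the trace theorem (cf. the first step of Lemma~\ref{lem:u_ueps_extreme}, which is where the assumption that $\omega$ is a simply connected Lipschitz domain is used). If one insisted on a self-contained derivation, unpacking that interior estimate would be the only delicate point; the ``additional equation giving information on $Q$ inside $\omega$'' referred to in the text is not needed for the boundedness bound itself and enters only afterwards, in identifying the weak limit $\fP$ on the inclusion.
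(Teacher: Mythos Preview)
Your proposal is correct and follows essentially the same route as the paper: the paper's proof simply says to repeat the argument of Lemma~\ref{lem:bounds_scale_one}, replacing Lemmas~\ref{lem:dif_p_peps} and \ref{lem:u_ueps} by their extremal-case analogues Lemmas~\ref{lem:dif_p_peps_extreme} and \ref{lem:u_ueps_extreme}, which is exactly what you do. Your additional remarks about the extensions being genuinely in $H^1_0(\Dsf)$ and about the interior equation for $\fP$ being irrelevant for this boundedness step are accurate and helpful elaborations.
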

	\begin{proof}
	  We follow the steps of Lemma~\ref{lem:bounds_scale_one}, but use Lemmas~\ref{lem:dif_p_peps_extreme},\ref{lem:u_ueps_extreme} instead Lemmas~\ref{lem:dif_p_peps},\ref{lem:u_ueps}.  
	\end{proof}
      
      \begin{theorem}\label{thm:convegence_peps_extreme}
	We have
	\begin{subequations}
	  \begin{align}
	    \label{eq:weakQa} \nabla \fP^\eps \rightharpoonup \nabla \fP \qquad & \text{ weakly in } L_2(\VR^d)^d,\\
	  \label{eq:weakQb} \eps \fP^\eps \rightharpoonup 0 \qquad & \text{ weakly in } H^1(\VR^d),
	\end{align}
      \end{subequations}
	  where  $[\fP]\in \dot{BL}(\VR^d)$ is the unique solution to 
	\begin{subequations}\label{eqeq}
	  \begin{align}
	    \label{eq:exterior_beta_extreme} \int_{\VR^d\setminus\overbar \omega} \beta_2(z) \nabla \psi\cdot \nabla \fP  \; dx & = \int_\omega \zeta \cdot \nabla \psi\; dx && \text{ for all } \psi \in BL(\VR^d),\\
	    \label{eq:exterior_beta_extremeb}\int_{\omega} \beta_2(z)\nabla \psi\cdot \nabla \fP\;dx & =0 && \text{ for all } \psi \in H^1_0(\omega), 
	  \end{align}
	\end{subequations}
where  $\zeta := -(\beta_1(z)-\beta_2(z))\nabla \fp(z)$.
			\end{theorem}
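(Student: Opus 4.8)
The plan is to re-run the proof of Theorem~\ref{thm:convegence_peps} for the region outside $\omega$ and to supplement it with a separate rescaling of the Dirichlet problem defining $\fp^+_\eps$, which produces the extra interior relation \eqref{eq:exterior_beta_extremeb} — the point being that, unlike in the transmission case, the averaged adjoint equation only controls $\fp_\eps$ on $\Dsf\setminus\overbar\omega_\eps$, so the behaviour of $\fP$ inside $\omega$ has to be recovered from the artificial extension. By Lemma~\ref{lem:bounds_scale_one_extreme} the family $(\fP^\eps)$ is bounded in the Hilbert space $\dot{BL}(\VR^d)$, so along any null sequence $(\eps_n)$ there are a subsequence $(\eps_{n_k})$ and a class $[\fP]\in\dot{BL}(\VR^d)$ with $\nabla\fP^{\eps_{n_k}}\rightharpoonup\nabla\fP$ weakly in $L_2(\VR^d)^d$; it then suffices to show that every such weak limit solves \eqref{eqeq}, since that system is uniquely solvable (last step), which upgrades subsequential convergence to convergence of the whole net. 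The second claim $\eps\fP^\eps\rightharpoonup0$ in $H^1(\VR^d)$ is obtained word for word as in the final paragraph of the proof of Theorem~\ref{thm:convegence_peps}: $\nabla(\eps\fP^\eps)\rightharpoonup0$ follows from the first claim, and Poincar\'e's inequality on balls combined with the uniform bound \eqref{eq:bound_p_eps_extreme} forces any weak $L_2$-limit of $\eps\fP^\eps$ to be constant, hence zero.

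For the exterior relation \eqref{eq:exterior_beta_extreme}, I would start from \eqref{eq:perturbed_adjoint_topo_extreme}, subtract the unperturbed adjoint equation and isolate the integrals over $\omega_\eps$, obtaining for every $\psi\in H^1_0(\Dsf)$ the analogue of \eqref{eq:b_0}, namely $b_\eps(\psi,\fp_\eps-\fp)=\int_{\omega_\eps}\big(2u\psi+\beta_2\nabla\psi\cdot\nabla\fp+\varrho_2'(u)\psi\fp\big)\,dx-\int_{\Dsf\setminus\overbar\omega_\eps}(u_\eps-u)\psi\,dx-\int_{\Dsf\setminus\overbar\omega_\eps}\Big(\int_0^1\big(\varrho_2'(su_\eps+(1-s)u)-\varrho_2'(u)\big)\,ds\Big)\psi\fp\,dx$. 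Testing with $\psi:=\eps\,\bar\psi\circ T_\eps^{-1}$ for fixed $\bar\psi\in H^1_0(\Dsf_{\bar\eps})$ (admissible after extension by zero, by Lemma~\ref{lem:trafo_eps}), changing variables under $T_\eps$ and dividing by $\eps^d$, every contribution except $\int_\omega(\beta_2\circ T_\eps)\nabla\bar\psi\cdot(\nabla\fp)(T_\eps)\,dx$ retains a factor $\eps$ and stays bounded thanks to Lemma~\ref{lem:bounds_scale_one_extreme} (controlling $\|\eps\fP^\eps\|_{L_2}$ and $\|\nabla\fP^\eps\|_{L_2}$), Lemma~\ref{lem:u_ueps_extreme} and the uniform $L_\infty$-bound on $u_\eps$, exactly as in \eqref{eq:bound_A}--\eqref{eq:bound_D}. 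Passing to the limit along $\eps_{n_k}$ (using $\beta_2\circ T_\eps\to\beta_2(z)$ and $(\nabla\fp)\circ T_\eps\to\nabla\fp(z)$ uniformly on $\overbar\omega$ since $\fp\in C^1(\overbar B_\delta(z))$, together with $\nabla\fP^{\eps_{n_k}}\rightharpoonup\nabla\fP$) yields $\int_{\Dsf_{\bar\eps}\setminus\overbar\omega}\beta_2(z)\nabla\bar\psi\cdot\nabla\fP\,dx=\int_\omega\zeta\cdot\nabla\bar\psi\,dx$ with $\zeta=-(\beta_1(z)-\beta_2(z))\nabla\fp(z)$, after rewriting the right-hand side via the divergence theorem. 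Since $\bar\eps>0$ is arbitrary and $\Dsf_{\bar\eps}\nearrow\VR^d$, the domain $\Dsf_{\bar\eps}$ may be replaced by $\VR^d$, which is \eqref{eq:exterior_beta_extreme}.

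For the interior relation \eqref{eq:exterior_beta_extremeb}, I would rescale the Dirichlet problem defining $\fp^+_\eps$. Put $v_\eps:=\fp^+_\eps\circ T_\eps$ on $\omega$; using $\Div\big(M(T_\eps\,\cdot)\nabla(w\circ T_\eps)\big)=\eps^2(\Div(M\nabla w))\circ T_\eps$, the weak form of $-\Div(\beta_2^\top\nabla\fp^+_\eps)=-2u-\varrho_2'(u)\fp$ on $\omega_\eps$ becomes $\int_\omega(\beta_2\circ T_\eps)\nabla\psi\cdot\nabla v_\eps\,dx=\eps^2\int_\omega\big((-2u-\varrho_2'(u)\fp)\circ T_\eps\big)\psi\,dx$ for all $\psi\in H^1_0(\omega)$. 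Since $v_\eps=\eps\fP^\eps+\fp\circ T_\eps$ on $\omega$ and $\nabla(\fp\circ T_\eps)=\eps\,(\nabla\fp)\circ T_\eps$, dividing by $\eps$ gives $\int_\omega(\beta_2\circ T_\eps)\nabla\psi\cdot\nabla\fP^\eps\,dx+\int_\omega(\beta_2\circ T_\eps)\nabla\psi\cdot(\nabla\fp)(T_\eps)\,dx=O(\eps)$, the error being controlled since $u,\fp$ are bounded on $\overbar B_\delta(z)$. Letting $\eps=\eps_{n_k}\to0$ and noting, by the divergence theorem, that $\int_\omega\beta_2(z)\nabla\psi\cdot\nabla\fp(z)\,dx=\big(\beta_2(z)^\top\nabla\fp(z)\big)\cdot\int_{\partial\omega}\psi\,\nu\,ds=0$ for $\psi\in H^1_0(\omega)$, we arrive at $\int_\omega\beta_2(z)\nabla\psi\cdot\nabla\fP\,dx=0$, i.e.\ \eqref{eq:exterior_beta_extremeb}.

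It remains to record the well-posedness of \eqref{eqeq} used above and to name the hard part. For uniqueness I would take $W$ to be the difference of two solutions: \eqref{eq:exterior_beta_extreme} with $\psi=W$ gives $\nabla W=0$ on the connected set $\VR^d\setminus\overbar\omega$, so, after normalising $W$ to vanish there, $W\in H^1_0(\omega)$, and then \eqref{eq:exterior_beta_extremeb} with $\psi=W$ gives $\nabla W=0$ on $\omega$ as well, i.e.\ $[W]=[0]$. For existence I would apply Lax--Milgram to the form $([\psi],[\varphi])\mapsto\int_{\VR^d\setminus\overbar\omega}\beta_2(z)\nabla\psi\cdot\nabla\varphi\,dx$ on the closed subspace $\mathcal V\subset\dot{BL}(\VR^d)$ of classes that are $\beta_2(z)^\top$-harmonic in $\omega$; coercivity on $\mathcal V$ holds because the interior Dirichlet energy is bounded by the exterior one via the trace theorem (here $\partial\omega$ Lipschitz is used) and the Poincar\'e inequality on an annulus, while $[\psi]\mapsto\int_\omega\zeta\cdot\nabla\psi\,dx$ is bounded and annihilates $H^1_0(\omega)$ by the divergence theorem, so the resulting $[\fP]$ satisfies \eqref{eq:exterior_beta_extreme} for all $\psi\in BL(\VR^d)$ and \eqref{eq:exterior_beta_extremeb} for all $\psi\in H^1_0(\omega)$. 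The main obstacle is the bookkeeping of the two distinct rescalings in the exterior and interior passages to the limit — in particular, seeing that the unscaled gradient $\nabla\fp(z)$ is harmless inside $\omega$ precisely because it integrates to zero against an $H^1_0(\omega)$ test gradient — together with checking that the lower-order and right-hand-side terms in both the adjoint and the extension equation are genuinely $o(1)$; the remainder is a routine transcription of the proof of Theorem~\ref{thm:convegence_peps}.
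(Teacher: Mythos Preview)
Your proposal is correct and tracks the paper's argument closely for the exterior relation and for the claim $\eps\fP^\eps\rightharpoonup 0$. The remark ``after rewriting the right-hand side via the divergence theorem'' in the exterior step is unnecessary: the rescaled right-hand side already converges to $\int_\omega\beta_2(z)\nabla\fp(z)\cdot\nabla\bar\psi\,dx=\int_\omega\zeta\cdot\nabla\bar\psi\,dx$ with $\zeta=\beta_2(z)\nabla\fp(z)=-(\beta_1(z)-\beta_2(z))\nabla\fp(z)$ (recall $\beta_1=0$), no integration by parts required.

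For the interior relation \eqref{eq:exterior_beta_extremeb} your route works but the paper's is shorter, and the difference is worth noting. You rescale the Dirichlet problem for $\fp_\eps^+$, pick up the extra term $\int_\omega(\beta_2\circ T_\eps)\nabla\psi\cdot(\nabla\fp)(T_\eps)\,dx$, and then argue that its limit $\int_\omega\beta_2(z)\nabla\psi\cdot\nabla\fp(z)\,dx$ vanishes for $\psi\in H^1_0(\omega)$. The paper instead observes that the extension was \emph{designed} so that $\fp_\eps-\fp$ is $\beta_2$-harmonic on $\omega_\eps$: the unperturbed adjoint $\fp$ already satisfies $-\Div(\beta_2^\top\nabla\fp)+\varrho_2'(u)\fp=-2u$ on $\omega_\eps$ (as the restriction of the global equation), and the extension $\fp_\eps^+$ solves the \emph{same} equation with the same lower-order terms, so for every $\psi\in H^1_0(\omega_\eps)$ one has $\int_{\omega_\eps}\beta_2\nabla\psi\cdot\nabla(\fp_\eps-\fp)\,dx=0$ exactly. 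After the change of variables this reads $\int_\omega\beta_2(T_\eps)\nabla\psi\cdot\nabla\fP^\eps\,dx=0$ for all $\eps$, and passing to the limit gives \eqref{eq:exterior_beta_extremeb} immediately. Your detour recovers the same identity but misses the structural reason behind the choice of extension.

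Finally, your separate Lax--Milgram existence proof for \eqref{eqeq} is correct but superfluous here: existence of $[\fP]$ is delivered by the weak compactness step itself, and only uniqueness (which you handle as the paper does) is needed to upgrade subsequential convergence to full convergence.
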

	\begin{proof} 
	  It follows from Lemma~\ref{lem:bounds_scale_one_extreme} that for every null-sequence $(\eps_n)$ there is a subsequence (indexed the same) and $\fP\in \text{BL}(\VR^d)$ such that \eqref{eq:weakQa} and \eqref{eq:weakQb} holds for this subsequence. Now using the same arguments as in the proof of Theorem~\ref{thm:convegence_peps} shows that $\fP$ satisfies \eqref{eq:exterior_beta_extreme}. The uniqueness of $Q|_{\VR^d\setminus \overbar \omega}$ follows directly from \eqref{eq:exterior_beta_extreme}.  To prove \eqref{eq:exterior_beta_extremeb} note that 
	  $Q^{\eps_n}$ satisfies 
	  \ben
	  \int_\omega \beta(T_{\eps_n}(x))\nabla \psi \cdot \nabla \fP^{\eps_{n}} \;dx =0\quad \text{ for all }    \psi\in H^1_0(\omega). 
	  \een
	  Using \eqref{eq:weakQa} and \eqref{eq:weakQb} we may pass to the limit $n\to \infty$ which shows that $Q$ satisfies \eqref{eq:exterior_beta_extremeb}. Since $Q|_{\partial \omega}$ is uniquely determined from \eqref{eq:exterior_beta_extreme} also \eqref{eq:exterior_beta_extremeb} admits a unique solution, because it is a Dirichlet problem with boundary values $\fP|_{\partial \omega}$. 
\end{proof}

We are now ready to compute  $R(u,\fp)$. 
\begin{lemma}\label{lem:verify_H3_extreme}
We have 
\ben\label{eq:form_R}
R(u,\fp) = -\beta_2(z) \nabla u(z)\cdot \fint_{\omega} \nabla \fP\; dx,
\een
where $[\fP]$ is the solution to \eqref{eq:exterior_beta_extremeb}.
\end{lemma}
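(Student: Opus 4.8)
The plan is to imitate the proof of Lemma~\ref{lem:verify_H3}, using that the extremal case is obtained formally by putting $\beta_1=\varrho_1=f_1=0$. First I would test the unperturbed state equation \eqref{eq:PDE_constraint_topo_extreme} at $\eps=0$ (where the integration domain is $\Dsf$, since $\omega_0$ is a null set) with the admissible function $\varphi=\fp_\eps-\fp\in H^1_0(\Dsf)$, which gives
\ben
\int_{\Dsf} \beta_2\nabla u\cdot\nabla(\fp_\eps-\fp) + \varrho_2(u)(\fp_\eps-\fp)\;dx = \int_{\Dsf} f_2(\fp_\eps-\fp)\;dx .
\een
Subtracting this identity from $G(\eps,u,\fp_\eps)-G(\eps,u,\fp)$ (computed from \eqref{eq:lagrange_scale_eps_extreme}), the parts over $\Dsf\setminus\overbar\omega_\eps$ cancel and one is left with
\ben
G(\eps,u,\fp_\eps)-G(\eps,u,\fp) = -\int_{\omega_\eps}\beta_2\nabla u\cdot\nabla(\fp_\eps-\fp) + (\varrho_2(u)-f_2)(\fp_\eps-\fp)\;dx .
\een
Here one must keep in mind that $\fp_\eps$ denotes the extension into $\omega_\eps$ built via the Dirichlet problem, so that $\fp_\eps-\fp$ is genuinely in $H^1_0(\Dsf)$; apart from this the step is identical to the transmission case.

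Next I would divide by $|\omega_\eps|=|\omega|\eps^d$ and change variables with $T_\eps$. Using $(\fp_\eps-\fp)(T_\eps(x))=\eps\fP^\eps(x)$ and $(\nabla(\fp_\eps-\fp))(T_\eps(x))=\nabla\fP^\eps(x)$ for $x\in\omega$, this yields
\ben
\frac{G(\eps,u,\fp_\eps)-G(\eps,u,\fp)}{|\omega_\eps|} = -\frac{1}{|\omega|}\int_{\omega} (\beta_2\nabla u)(T_\eps(x))\cdot\nabla\fP^\eps(x)\;dx - \frac{1}{|\omega|}\int_{\omega} \big(\varrho_2(u(T_\eps(x)))-f_2(T_\eps(x))\big)\,\eps\fP^\eps(x)\;dx .
\een
Then I would pass to the limit $\eps\searrow 0$. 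Since $T_\eps(\omega)\subset\overbar B_\delta(z)$ for $\eps$ small and $\beta_2\in C^1$, $u\in C^1(\overbar B_\delta(z))$, $\varrho_2\in C^1$, $f_2\in C(\Dsf)$, the rescaled coefficients converge uniformly on $\omega$ to $\beta_2(z)\nabla u(z)$ and $\varrho_2(u(z))-f_2(z)$, respectively. By Theorem~\ref{thm:convegence_peps_extreme}, $\nabla\fP^\eps\rightharpoonup\nabla\fP$ weakly in $L_2(\VR^d)^d$ and $\eps\fP^\eps\rightharpoonup 0$ weakly in $H^1(\VR^d)$, hence in $L_2(\VR^d)$. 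Pairing the weakly convergent sequences with the uniformly convergent coefficients, the first integral tends to $\beta_2(z)\nabla u(z)\cdot\int_{\omega}\nabla\fP\;dx$ and the second to $0$. This gives $R(u,\fp)=-\beta_2(z)\nabla u(z)\cdot\fint_{\omega}\nabla\fP\;dx$, as asserted.

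I do not expect a genuine obstacle here: all the analytic work has already been done in Theorem~\ref{thm:convegence_peps_extreme}. The only points requiring care are that one uses throughout the correct extension $\fp_\eps\in H^1_0(\Dsf)$ (so that it is a legitimate test function in the unperturbed state equation), and that the product of the weakly convergent $\nabla\fP^\eps$ with the uniformly (hence strongly) convergent rescaled coefficient passes to the limit; both are routine. The remaining change of variables is mechanical and parallels the transmission case of Lemma~\ref{lem:verify_H3}.
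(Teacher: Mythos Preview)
Your proposal is correct and follows precisely the approach indicated by the paper, which simply states that the proof ``follows the lines of Lemma~\ref{lem:verify_H3} and Lemma~\ref{lem:u_ueps_extreme}.'' You have spelled out the details of this imitation faithfully, including the key observation that the extended $\fp_\eps\in H^1_0(\Dsf)$ is an admissible test function in the unperturbed state equation, and your limit passage via Theorem~\ref{thm:convegence_peps_extreme} is exactly what is required.
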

\begin{proof}
 The proof follows the lines of Lemma~\ref{lem:verify_H3} and Lemma~\ref{lem:u_ueps_extreme}.
\end{proof}

Collecting all previous results we see that Theorem~\ref{thm:diff_lagrange} can be applied and we obtain the following result.

\begin{theorem}\label{thm:main_extreme}
  The topological derivative of $J$ given by \eqref{eq:cost_extreme} in $z\in \Dsf$ is given by 
  \ben\label{eq:topo_final_extreme}
  \lim_{\eps\searrow 0} \frac{J(\omega_\eps(z))-J(\Omega)}{|\omega_\eps|} =  \partial_\ell G(0, u,\fp) + R(u,\fp),
\een
where 
\ben
\partial_\ell G(0, u,\fp) = ( -\beta_2\nabla u\cdot \nabla \fp  - \varrho_2(u)\fp +  f_2\fp)(z)
\een
and 
\ben\label{eq:form_R_main_extreme}
R(u,\fp) = -\beta_2(z) \nabla u(z)\cdot\fint_{\omega} \nabla \fP\; dx
\een
and $\fP$ depends on $z$ and is the unique solution to \eqref{eq:exterior_beta_extreme}.  
\end{theorem}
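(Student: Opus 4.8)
The plan is to verify that all the hypotheses of the abstract result, Theorem~\ref{thm:diff_lagrange}, are in force for the Lagrangian $G$ of \eqref{eq:lagrange_scale_eps_extreme} with $\ell(\eps)=|\omega_\eps|$, and then to read off the explicit formula from the identity $d_\ell g(0)=\partial_\ell G(0,u,\fp)+R(u,\fp)$. First I would recall that, by the very definition of $G$, the parametrised infimum is $g(\eps)=\inf_{v\in E(\eps)}G(\eps,v,0)=\int_{\Dsf\setminus\overbar\omega_\eps}\bar u_\eps^2\,dx=J(\omega_\eps)$, where $\bar u_\eps$ is the unique solution of \eqref{eq:state_weak_chi_extreme}; this was observed right after \eqref{eq:lagrange_scale_eps_extreme}. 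Since here $\Omega=\emptyset$, we have $g(0)=J(\emptyset)=J(\Omega)$, so that the difference quotient $\bigl(J(\omega_\eps(z))-J(\Omega)\bigr)/|\omega_\eps|$ coincides with $\bigl(g(\eps)-g(0)\bigr)/\ell(\eps)$, whose limit is $d_\ell g(0)$.

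Next I would collect the verification of (H0)--(H4) carried out in this section. Unique solvability of \eqref{eq:state_weak_chi_extreme} (together with the fact that $X(\eps)=E(\eps)$ because $G$ depends only on the restriction of functions to $\Dsf\setminus\overbar\omega_\eps$) gives (H0), and Assumption~\ref{ass:varrho_monotone_extreme} gives (H1); the Lax--Milgram analysis of \eqref{eq:perturbed_adjoint_topo_extreme} yields (H2); the limit \eqref{eq:hypo_H2_extreme} is (H3); and Lemma~\ref{lem:verify_H3_extreme} establishes (H4), producing at the same time the admissible sequences $(u_\eps)\subset X(\eps)$ and $(\fp_\eps)\subset Y(\eps,u_0,u_\eps)$ built from the auxiliary Dirichlet problems such as \eqref{eq:extension}. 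Theorem~\ref{thm:diff_lagrange} then applies and gives
\[
  \lim_{\eps\searrow0}\frac{J(\omega_\eps(z))-J(\Omega)}{|\omega_\eps|}=d_\ell g(0)=\partial_\ell G(0,u,\fp)+R(u,\fp),
\]
with $R(u,\fp)$ independent of the chosen sequences.

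Finally I would substitute the explicit expressions: $\partial_\ell G(0,u,\fp)=(-\beta_2\nabla u\cdot\nabla\fp-\varrho_2(u)\fp+f_2\fp)(z)$ by \eqref{eq:hypo_H2_extreme}, and $R(u,\fp)=-\beta_2(z)\nabla u(z)\cdot\fint_\omega\nabla\fP\,dx$ by Lemma~\ref{lem:verify_H3_extreme}, where $[\fP]\in\dot{BL}(\VR^d)$ is the unique solution of the coupled exterior/interior problem; since $\fP$ is harmonic on $\omega$ with the boundary data fixed by \eqref{eq:exterior_beta_extreme}, the interior equation is redundant for the value of $R$, so one may state the dependence through \eqref{eq:exterior_beta_extreme} alone, as in the statement. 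This yields \eqref{eq:topo_final_extreme}. The only genuinely delicate point, already handled in Lemmas~\ref{lem:u_ueps_extreme} and \ref{lem:verify_H3_extreme}, is that (unlike the transmission case) $X(\eps)$ and $Y(\eps,u_0,u_\eps)$ are not singletons, so one must check that the extended functions really lie in these sets; the sequence-independence of $R$ guaranteed by Theorem~\ref{thm:diff_lagrange} makes this choice immaterial for the final formula.
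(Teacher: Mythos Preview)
Your proposal is correct and follows the same route as the paper: the theorem is stated immediately after the sentence ``Collecting all previous results we see that Theorem~\ref{thm:diff_lagrange} can be applied,'' so the proof consists precisely of invoking (H0)--(H4) as verified in this section and reading off $\partial_\ell G(0,u,\fp)$ from \eqref{eq:hypo_H2_extreme} and $R(u,\fp)$ from Lemma~\ref{lem:verify_H3_extreme}. Your added remark that the interior equation \eqref{eq:exterior_beta_extremeb} merely fixes $\fP|_\omega$ from the boundary data determined by \eqref{eq:exterior_beta_extreme} (so the statement's reference to \eqref{eq:exterior_beta_extreme} alone suffices) is a helpful clarification not spelled out in the paper; note only that $\fP$ is $\beta_2(z)$-harmonic rather than harmonic in $\omega$.
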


Let $\fP_\zeta$ denote the solution to \eqref{eq:exterior_beta_extreme}-\eqref{eq:exterior_beta_extremeb} for fixed $z\in \Dsf$ and for $\zeta \in \VR^d$. Since $Q_\zeta$ depends linearly on $\zeta$ we can proceed as in Subsection~\ref{subsec:topo} and introduce a polarisation matrix $\Cp\in \VR^{d\times d}$ (depending on $\beta_2(z)$) such that $ \Cp\zeta =\fint_{\omega} \nabla \fP_\zeta\;dx$  to simplify \eqref{eq:topo_final_extreme}. Finally in the same way done as in Lemmas~\ref{lem:pol_sym},\ref{lem:pol_pos} we can show that $\Cp$ is symmetric if $\beta_2$ is symmetric and that it is always positive definite. Since the considerations are almost identical with the ones of Subsection~\ref{subsec:topo} the details are left to the reader.

\subsection*{Concluding remarks}
	In this paper we showed that the Lagrangian averaged adjoint framework of \cite{a_DEST_2017a} provides an efficient and fairly simple tool to compute topological derivatives for semilinear problems. We illustrated that using standard apriori estimates for the perturbed states and averaged adjoint variables are sufficient to obtain the topological sensitivity under comparatively mild assumptions on the inclusion. Our work 
	also provides a second examples (the first was given by \cite{c_DEST_2016a})  for which the $R$ term in \cite[Thm. 3.1]{a_DEST_2017a} is not equal to zero and thus underlines the flexibility of this theorem.

There are several problems that remain open for further research. Firstly, it would be interesting to consider quasilinear equations, but also other types of equations, such as Maxwell's equation. Secondly, an important point we have not addressed here is the topological derivative when Dirichlet boundary conditions are imposed on the inclusion. This case is know to be much different from the Neumann case and needs further investigations. 

	\section{Appendix}

	
	\subsection{The space \texorpdfstring{$\VE_p(\VR^d)$}{Lg}}
Define for $1<p<d$ the space
\ben
\VE_p(\VR^d) := \{u\in L_{p^*}(\VR^d):\; \nabla u \in L_2(\VR^d)^d\}
\een
with the norm
\ben
\|u\|_{\VE_p} := \|u\|_{L_{p^*}(\VR^d)} + \|\nabla u\|_{L_2(\VR^d)^d}.
\een

	\begin{lemma}
	  Let $d\ge 3$. Let $A$ satisfy \eqref{eq:laplace_unbounded_intro} and $A=A^\top$ a.e. on $\VR^d$. Then for every $F\in \mathcal L(\VE_2(\VR^d), \VR)$, there is a unique solution $\fP \in \VE_2(\VR^d)$ to
    \ben\label{eq:topo_semi_final4}
    \int_{\VR^d} A\nabla \varphi \cdot \nabla \fP \;dx  =    F(\varphi) \quad \text{ for all }\varphi \in \VE_{2}(\VR^d).
		\een
  \end{lemma}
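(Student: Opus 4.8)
The plan is to recognise \eqref{eq:topo_semi_final4} as a coercive variational problem on a Hilbert space and apply the Lax--Milgram theorem. First I would equip $\VE_2(\VR^d)$ with the bilinear form $\langle u,v\rangle := \int_{\VR^d}\nabla u\cdot\nabla v\;dx$ and the induced seminorm $u\mapsto\|\nabla u\|_{L_2(\VR^d)^d}$. This seminorm is in fact a norm on $\VE_2(\VR^d)$: if $\|\nabla u\|_{L_2(\VR^d)^d}=0$ then $u$ equals a.e.\ a constant, and the only constant function lying in $L_{2^*}(\VR^d)$ is $0$. Moreover, since $d\ge 3$, the Gagliardo--Nirenberg--Sobolev inequality (as used already in the excerpt, cf.\ \cite{a_ORSU_2012a}) gives a constant $C_S>0$ with $\|u\|_{L_{2^*}(\VR^d)}\le C_S\|\nabla u\|_{L_2(\VR^d)^d}$ for every $u\in\VE_2(\VR^d)$, so $\|\nabla\cdot\|_{L_2(\VR^d)^d}$ is equivalent to the norm $\|\cdot\|_{\VE_2}$.

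The second step is to check that $(\VE_2(\VR^d),\langle\cdot,\cdot\rangle)$ is complete. Given a sequence $(u_n)$ Cauchy for $\|\nabla\cdot\|_{L_2(\VR^d)^d}$, the gradients $\nabla u_n$ converge in $L_2(\VR^d)^d$ to some $g$, and, applying the Sobolev inequality to $u_n-u_m$, the sequence $(u_n)$ is Cauchy in $L_{2^*}(\VR^d)$, hence converges there to some $u$. Passing to the limit in the identity $\int_{\VR^d}u_n\,\partial_i\phi\;dx=-\int_{\VR^d}(\partial_i u_n)\phi\;dx$ for $\phi\in C^\infty_c(\VR^d)$ (using H\"older in the first integral and $L_2$-convergence in the second) shows $\nabla u=g$ in the distributional sense, so $u\in\VE_2(\VR^d)$ and $u_n\to u$ in $\VE_2(\VR^d)$. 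Thus $\VE_2(\VR^d)$ is a Hilbert space with inner product $\langle\cdot,\cdot\rangle$.

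The third step is to verify the Lax--Milgram hypotheses for $a(\varphi,\fP):=\int_{\VR^d}A\nabla\varphi\cdot\nabla\fP\;dx$ on this Hilbert space. Boundedness follows from the upper bound $A(x)v\cdot v\le M_2|v|^2$ together with Cauchy--Schwarz, giving $|a(\varphi,\fP)|\le M_2\|\nabla\varphi\|_{L_2(\VR^d)^d}\|\nabla\fP\|_{L_2(\VR^d)^d}$; coercivity follows from the lower bound $A(x)v\cdot v\ge M_1|v|^2$, giving $a(\fP,\fP)\ge M_1\|\nabla\fP\|_{L_2(\VR^d)^d}^2$. Finally $F\in\mathcal L(\VE_2(\VR^d),\VR)$ is, by definition, a bounded linear functional on this Hilbert space. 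Lax--Milgram then provides a unique $\fP\in\VE_2(\VR^d)$ with $a(\varphi,\fP)=F(\varphi)$ for all $\varphi\in\VE_2(\VR^d)$, which is precisely \eqref{eq:topo_semi_final4}. (The symmetry $A=A^\top$ is not actually needed for this argument; when it holds one may equivalently invoke the Riesz representation theorem, since $a$ is then an equivalent inner product.)

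The only genuinely nontrivial ingredient is the Sobolev inequality $\|u\|_{L_{2^*}(\VR^d)}\lesssim\|\nabla u\|_{L_2(\VR^d)^d}$ on $\VE_2(\VR^d)$, which is what makes the two norms equivalent and, in turn, makes $\|\nabla\cdot\|_{L_2(\VR^d)^d}$ both a norm and a complete one; this is exactly the place where the hypothesis $d\ge 3$ is used, and it is taken from \cite{a_ORSU_2012a}. Everything else is routine functional analysis, so I do not expect a serious obstacle beyond bookkeeping the norm equivalence carefully.
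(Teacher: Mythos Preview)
Your argument is correct. The paper, however, takes a different route: it introduces the energy
\[
\Ce(\varphi)=\tfrac12\int_{\VR^d}A\nabla\varphi\cdot\nabla\varphi\;dx - F(\varphi)
\]
and applies the direct method of the calculus of variations, proving coercivity on $\VE_2(\VR^d)$ via the Gagliardo--Nirenberg--Sobolev inequality and then invoking weak lower semicontinuity to obtain a minimiser, whose Euler--Lagrange equation is \eqref{eq:topo_semi_final4}. This is where the hypothesis $A=A^\top$ is actually used: symmetry is what makes the bilinear form the second variation of a quadratic energy, so that a minimiser solves the equation. Your Lax--Milgram argument is more direct, bypasses the variational detour, and---as you observe---does not need the symmetry assumption at all; it also makes uniqueness immediate rather than relying on strict convexity. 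Both proofs ultimately hinge on the same analytic fact, namely that $\|\nabla\cdot\|_{L_2(\VR^d)^d}$ is an equivalent norm on $\VE_2(\VR^d)$ for $d\ge 3$, but your packaging is cleaner and slightly more general.
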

  \begin{proof}
    Let us introduce the energy $\Ce:\VE_2(\VR^d) \to \VR$ by
\ben
\Ce(\varphi) := \frac12\int_{\VR^d} A \nabla \varphi\cdot \nabla\varphi \;dx  -  F(\varphi).
\een
We are now going to prove that the minimisation problem
\ben
\inf_{\varphi \in \VE_2(\VR^d)} \Ce(\varphi),
\een
admits a unique solution. We have to show that $\Ce$ is coercive on $\VE_2(\VR^d)$ , that is,
\ben
 \lim \Ce(\varphi) = +\infty \quad \text{ for  } \varphi \in \VE_2(\VR^d), \text{ with }  \|\varphi\|_{\VE_2} \to \infty 
\een
and that the energy is lower semi-continuous; see \cite[Prop. 1.2, p.35]{b_EKTE_1999a}. For the coercively it is sufficient to show that there are constants $C_1,C_2>0$ such that
\ben\label{eq:coercive_E}
\Ce(\varphi) \ge C_1\|\varphi\|_{\VE_2(\VR^d)}^2 - C_2 \|\varphi\|_{\VE_2(\VR^d)}\quad \text{ for all } \varphi \in \VE_2(\VR^d).
\een
Using the NSG inequality we can estimate as follows
\ben\label{eq:energy_a}
\begin{split}
  \frac12\int_{\VR^d} A \nabla \varphi \cdot \nabla \varphi \;dx &\ge \frac12 M_1 \|\nabla \varphi\|_{(L_2)^d}^2 \\
	       & \ge \frac14 C_N^2 M_1 \|\varphi\|_{L_{2^*}}^2 + \frac14 M_1 \|\nabla \varphi\|_{(L_2)^d}^2 \\
	       & \ge C(\|\varphi\|_{L_{2^*}}^2 + \|\nabla \varphi\|_{(L_2)^d}^2),
\end{split}
\een
where $C:=\min\{\frac14 C_N^2M_1, \frac14 M_1\}$. On the other hand using again the NSG inequality yields
\ben\label{eq:energy_b}
\begin{split}
  (\|\varphi\|_{L_{2^*}} + \|\nabla \varphi\|_{(L_2)^d})^2 & = \|\varphi\|_{L_{2^*}}^2 + \|\nabla \varphi\|_{(L_2)^d}^2 + 2\|\varphi\|_{L_{2^*}}\|\nabla \varphi\|_{(L_2)^d}\\
							   & \le \|\varphi\|_{L_{2^*}}^2 + \|\nabla \varphi\|_{(L_2)^d}^2 + 2\frac{1}{C_N}\|\varphi\|_{L_{2^*}}^2\\
							   & \le \tilde C (\|\varphi\|_{L_{2^*}}^2 + \|\nabla \varphi\|_{(L_2)^d}^2)
\end{split}
\een
where $\tilde C:= \min\{ 1+ \frac{2}{C_N}, 1\}$. Combining \eqref{eq:energy_a} and \eqref{eq:energy_b} yields
\ben\label{eq:energy_c}
\frac12\int_{\VR^d}  A\nabla \varphi\cdot \nabla \varphi \;dx  \ge \frac{C}{\tilde C} \|\varphi\|^2_{\VE_2(\VR^d)}.
\een
Finally the continuity of $F$ gives
\ben\label{eq:energy_d}
\begin{split}
  F(\varphi)  & \ge - \|F\|_{\mathcal L(\VE_2,\VR)}\|\varphi\|_{\VE_2(\VR^d)}. 
\end{split}
\een
Combining \eqref{eq:energy_c} and \eqref{eq:energy_d} yields \eqref{eq:coercive_E} with $C_1 = C/\tilde C$ and $C_2 = \|F\|_{\mathcal L(\VE_p,\VR)}$.  
  \end{proof}

Recall the Gagliardo-Nirenberg-Sobolev inequality (short NSG inequality)
	\ben\label{eq:nirenberg}
	\|u\|_{L_{p^*}(\VR^d)} \le C_N\|\nabla u\|_{L_p(\VR^d)}
	\een
	valid for all $u \in C^\infty_c(\VR^d)$. The constant $C_N$ does not depend on the 
	support of the function $u$. Notice also that for $p=d$ the inequality fails. Thanks to Lemma~\ref{lem:space_E} we know that $C^\infty_c(\VR^d)$ is dense in $\VE_p(\VR^d)$. Hence it follows that \eqref{eq:nirenberg} holds for all test functions $u \in \VE_p(\VR^d)$.  For instance for $d=3$ and $E_2(\VR^3)$ we have
	\ben
	\|u\|_{L_6(\VR^3)} \le C\|\nabla u\|_{L_{2}(\VR^3)}.
	\een

\begin{lemma}\label{lem:space_E}
	For all $1<p<d$ the space $(\VE_p(\VR^d),\|\cdot\|_{\VE_p})$ is a Banach space. For every sequence $(u_n)$ in $\VE_p(\VR^d)$ we find a subsequence $(u_{n_k})$ and an element $u\in \VE_p(\VR^d)$, such that 
             \ben
		\begin{split}
		  u_{n_k} & \rightharpoonup u  \quad \text{ weakly in } L_{p^*}(\VR^d) \quad \text{ as }  n\to \infty,\\
		  \nabla u_{n_k} & \rightharpoonup \nabla u  \quad \text{ weakly in } L_p(\VR^d)^d \quad \text{ as } n\to \infty. 
		\end{split}
		\een
	Moreover, $C^\infty_c(\VR^d)$ is dense in $\VE_p(\VR^d)$.  
      \end{lemma}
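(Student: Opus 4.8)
The plan is to prove the three assertions separately; only the density statement requires real work. That $(\VE_p(\VR^d),\|\cdot\|_{\VE_p})$ is a normed vector space is immediate, the axioms being inherited from $L_{p^*}(\VR^d)$ and $L_p(\VR^d)^d$ (definiteness because $\|u\|_{L_{p^*}(\VR^d)}=0$ forces $u=0$ a.e.). For completeness I would take a $\|\cdot\|_{\VE_p}$-Cauchy sequence $(u_n)$; since $\|u_n-u_m\|_{L_{p^*}(\VR^d)}\le\|u_n-u_m\|_{\VE_p}$ and $\|\nabla u_n-\nabla u_m\|_{L_p(\VR^d)^d}\le\|u_n-u_m\|_{\VE_p}$, the sequences $(u_n)$ and $(\nabla u_n)$ converge, respectively, to some $u\in L_{p^*}(\VR^d)$ and $v\in L_p(\VR^d)^d$. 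The remaining point is that $v=\nabla u$ in $\mathcal D'(\VR^d)$: for $\varphi\in C_c^\infty(\VR^d)$ one has $\int_{\VR^d}u_n\,\partial_i\varphi\,dx\to\int_{\VR^d}u\,\partial_i\varphi\,dx$ (as $u_n\to u$ in $L^1$ over $\supp\varphi$) and $\int_{\VR^d}u_n\,\partial_i\varphi\,dx=-\int_{\VR^d}\partial_iu_n\,\varphi\,dx\to-\int_{\VR^d}v_i\,\varphi\,dx$, so $\partial_iu=v_i\in L_p(\VR^d)$ by uniqueness of distributional limits; hence $u\in\VE_p(\VR^d)$ and $u_n\to u$ in $\VE_p$.

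For the weak sequential compactness I would take a bounded sequence $(u_n)$ in $\VE_p(\VR^d)$ (boundedness being the pertinent case). Then $(u_n)$ and $(\nabla u_n)$ are bounded in the reflexive spaces $L_{p^*}(\VR^d)$ and $L_p(\VR^d)^d$ — note $1<p<\infty$ and $1<p^*<\infty$ — so after passing to a common subsequence $u_{n_k}\rightharpoonup u$ in $L_{p^*}(\VR^d)$ and $\nabla u_{n_k}\rightharpoonup v$ in $L_p(\VR^d)^d$. Since $C_c^\infty(\VR^d)$ is contained in every Lebesgue space, passing to the limit in $\int_{\VR^d}u_{n_k}\,\partial_i\varphi\,dx=-\int_{\VR^d}\partial_iu_{n_k}\,\varphi\,dx$ identifies $v=\nabla u$ exactly as in the previous step, so $u\in\VE_p(\VR^d)$ and the two asserted weak convergences hold.

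For the density of $C_c^\infty(\VR^d)$ I would combine a truncation step with mollification. Fix $\eta\in C_c^\infty(\VR^d)$ with $0\le\eta\le1$, $\eta\equiv1$ on $B_1(0)$ and $\supp\eta\subset B_2(0)$, put $\eta_R(x):=\eta(x/R)$ and $u_R:=\eta_R u$. Dominated convergence gives $u_R\to u$ in $L_{p^*}(\VR^d)$ and $\eta_R\nabla u\to\nabla u$ in $L_p(\VR^d)^d$, while for the remaining term in $\nabla u_R=\eta_R\nabla u+u\,\nabla\eta_R$ I would use H\"older's inequality with exponents $p^*$ and $d$ — here $\tfrac1{p^*}+\tfrac1d=\tfrac1p$, which is exactly the defining relation of the Sobolev conjugate and the place where $p<d$ is used — together with the scale invariance $\|\nabla\eta_R\|_{L_d(\VR^d)^d}=\|\nabla\eta\|_{L_d(\VR^d)^d}$, to get
\[ \|u\,\nabla\eta_R\|_{L_p(\VR^d)^d}\le\|\nabla\eta\|_{L_d(\VR^d)^d}\,\|u\|_{L_{p^*}(\VR^d\setminus B_R)}, \]
which tends to $0$ as $R\to\infty$ because $u\in L_{p^*}(\VR^d)$. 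Hence $u_R\to u$ in $\VE_p(\VR^d)$. Since each $u_R$ has compact support, the standard mollifications $u_R*\rho_\delta\in C_c^\infty(\VR^d)$ converge to $u_R$ in $L_{p^*}(\VR^d)$ and, via $\nabla(u_R*\rho_\delta)=(\nabla u_R)*\rho_\delta$, in $L_p(\VR^d)^d$; a diagonal argument over $R$ and $\delta$ then finishes the proof.

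The only step that is not soft is the truncation estimate: the term $u\,\nabla\eta_R$ is not controlled merely by the pointwise smallness of $\nabla\eta_R$, and dealing with it forces the use of the precise value of $p^*$ (equivalently the $L_d$-scale invariance of gradients and the restriction $p<d$). Everything else reduces to reflexivity and completeness of Lebesgue spaces and to elementary mollifier estimates.
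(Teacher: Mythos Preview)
Your proof is correct and follows the same overall architecture as the paper: reflexivity of $L_{p^*}$ and $L_p$ for the weak subsequential limit, the distributional identity $\int u_n\partial_i\varphi=-\int\partial_i u_n\,\varphi$ to identify the limit gradient, the same Cauchy-sequence argument for completeness, and truncation plus mollification for density.

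The one place where you go beyond the paper is the truncation step. The paper simply invokes \cite[Thm.~3.22]{b_ADFO_2003a} to reduce to the case of compactly supported $u$ and then mollifies; you instead carry out the cutoff argument explicitly, using the exponent identity $\tfrac{1}{p^*}+\tfrac{1}{d}=\tfrac{1}{p}$ together with the scale invariance $\|\nabla\eta_R\|_{L_d}=\|\nabla\eta\|_{L_d}$ to show $\|u\,\nabla\eta_R\|_{L_p}\to 0$. This is a genuine improvement in self-containedness: it makes transparent exactly where the restriction $p<d$ and the specific value of $p^*$ enter, whereas the paper's citation hides this. The cost is a few extra lines; the gain is that no external reference is needed for a space that is not a standard Sobolev space.
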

 \begin{proof}
	Let $(u_n)$ be a bounded sequence in $\VE_p(\VR^d)$. Since the $L_p(\VR^d)$-spaces are reflexive for all $p\in (1,\infty)$, we 
	find elements $\eta \in L_{p^*}(\VR^d)$ and $\zeta \in L_p(\VR^d)^d$ and a subsequence $(u_{n_k})$,  such that
		\ben
		\begin{split}
		  u_{n_k} & \rightharpoonup \eta  \quad \text{ weakly in } L_{p^*}(\VR^d) \quad \text{ as }  n\to \infty,\\
		  \nabla u_{n_k} & \rightharpoonup \zeta  \quad \text{ weakly in } L_p(\VR^d)^d \quad \text{ as } n\to \infty. 
		\end{split}
		\een
Now we claim that $\zeta = \nabla \eta$, which then implies $\eta\in \VE_p(\VR^d)$. To see this notice that by definition of the weak derivative
		\ben\label{eq:weak_derivative_definition_p_eps}
		\int_{\VR^d} \partial_{x_i}\varphi u_{n_k} \; dx = - \int_{\VR^d} \varphi \partial_{x_i} u_{n_k} \; dx
		\een
		for all $\varphi \in C^\infty_c(\VR^d)$. Now we pass to the limit in \eqref{eq:weak_derivative_definition_p_eps} and obtain
		\ben
		\int_{\VR^d} \partial_{x_i}\varphi \eta \; dx = - \int_{\VR^d} \varphi \zeta \; dx
		\een
		for all $\varphi \in C^\infty_c(\VR^d)$, which proves the claim. Since a linear and continuous functional on a Banach space is continuous if and only if it is weakly continuous the claim follows.

To prove the completeness of $\VE_p(\VR^d)$ let $(u_n)$ be a Cauchy sequence in $\VE_p(\VR^d)$. Then $(u_n)$ is a Cauchy sequence in $L_{p^*}(\VR^d)$ and 
$(\nabla u_n)$ is a Cauchy sequence in $L_p(\VR^d)^d$. Since $(u_n)$ is a Cauchy sequence in $L_{p^*}(\VR^d)$ and $(\nabla u_n)$ is a Cauchy sequence in $L_p(\VR^d)$ we find elements $\eta \in L_{p^*}(\VR^d)$ and $\zeta\in L_p(\VR^d)^d$ so that $u_n \to \zeta$ strongly in $L_{2^*}(\VR^d)$ and $\nabla u_n \to \zeta $ strongly in $L_p(\VR^d)^d$. Now we can follow the previous argumentation to show that $\nabla \eta = \zeta$ which shows that $\eta\in \VE_p(\VR^d)$ and thus shows that $\VE_p(\VR^d)$ is complete. 	

Let us now show the density of $C^\infty_c(\VR^d)$ in $\VE_p(\VR^d)$. As shown in \cite[Thm. 3.22, p. 68]{b_ADFO_2003a} it suffices to show every $u\in \VE_p(\VR^d)$ with bounded support can 
be approximated by function in $C^\infty_c(\VR^d)$. Suppose that the support of 
$u$ is compact. Denote by $u_\eps := \varrho_\eps \ast u$ the standard mollification of $u$ with a mollifier $\varrho_\eps\in C^\infty(\VR^d)$, $\eps >0$; see \cite[pp. 36]{b_ADFO_2003a}. Then $u_\eps \in L_{p^*}(\VR^d)$ and
$\partial_i u_\eps = \varrho_\eps\ast \partial_i u\in L_2(\VR^d)$. Moreover according to  \cite[Thm. 2.29, p. 36]{b_ADFO_2003a} we have $\lim_{\eps \searrow 0} \|u_\eps - u\|_{L_{2^*}(\VR^d)}=0$ and $\lim_{\eps \searrow 0} \|\partial_{x_i}(u_\eps - u)\|_{L_{2}(\VR^d)}=0$. This finishes the proof. 
      \end{proof}

\bibliography{mybib}
\bibliographystyle{plain}
\end{document}